\numberwithin{equation}{section}
\newtheorem{thm}{Theorem}[section]
\newtheorem{lem}[thm]{Lemma}
\newtheorem{rem}[thm]{Remark}
\theoremstyle{definition}
\def\tr{\Delta}
\def\bea{\begin{eqnarray*}}
\def\eea{\end{eqnarray*}}
\def\be{\begin{eqnarray}}
\def\ee{\end{eqnarray}}
\def\a{\alpha}
\def\n{\nabla} \def\e{\epsilon} 
\def\o{\omega}\def\d{\delta}\def\vp{\varphi}
\begin{document}

 \title{Besse conjecture with positive isotropic curvature}

\author{Seungsu Hwang}
\address{Department of Mathematics\\ Chung-Ang University\\
84 HeukSeok-ro DongJak-gu \\ Seoul 06974, Republic of Korea.
}
\email{seungsu@cau.ac.kr}

 \author{Gabjin Yun$^*$}
\address{Department of Mathematics\\  Myong Ji University\\
116 Myongji-ro Cheoin-gu\\ Yongin, Gyeonggi 17058, Republic of Korea. }
\email{gabjin@mju.ac.kr} 

\thanks{$*$ Corresponding author}
\thanks{The first author was supported by the Basic Science Research Program through the National Research Foundation of Korea(NRF)  funded by the Ministry of Education(NRF-2016R1D1A1A09916749), and the second and corresponding author by the Ministry of Education(NRF-2019R1A2C1004948).}

\keywords{Besse conjecture, Critical point equation, Einstein metric, Positive isotropic curvature.}

\subjclass{Primary 53C25; Secondary 53C20}

 \maketitle

 \begin{abstract}
 The critical point equation arises as a critical point of the total scalar curvature functional 
defined on the space of constant scalar curvature metrics of a unit volume on a compact 
manifold. In this equation, there exists a function $f$ on the manifold  that satisfies the following
  $$
  (1+f){\rm Ric} = Ddf + \frac{nf +n-1}{n(n-1)}sg.
  $$
    It has been conjectured that if $(g, f)$ is a solution of the critical point equation, then $g$
   is Einstein and so $(M, g)$  is isometric to a standard sphere.
 In this paper, we show that this conjecture is true if the given Riemannian metric has positive isotropic curvature.
  \end{abstract}
 
  \setlength{\baselineskip}{15pt}
 
\section{introduction}

In this paper, we consider closed smooth manifolds admitting Riemannian metrics
 as the criticals of a curvature functional.
Let $M$ be a closed $n$-dimensional smooth manifold and ${\mathcal M}_1$ be the set of all smooth Riemannian metrics of a unit volume
on $M$. The total scalar curvature functional ${\mathcal S}$ on ${\mathcal M}_1$ is given by
$$
{\mathcal S}(g) = \int_M s_g\, dv_g.
$$
where $s_g$ is the scalar curvature of the metric $g \in {\mathcal M}_1$ and $dv_g$ denotes the volume form of $g$.
Critical points of ${\mathcal S}$ on ${\mathcal M}_1$ are known to be Einstein (cf. \cite{Be}).
Introducing a subset ${\mathcal C}$ of ${\mathcal M}_1$ consisting of metrics with a constant scalar curvature,
the Euler-Lagrange equation of ${\mathcal S}$ restricted to ${\mathcal C}$ can be written in the following
\bea
z_g=s_g'^*(f),
\eea
which is called the critical point equation (CPE in short). 
Here, $z_g$ is the traceless Ricci tensor defined by $z_g:= {\rm Ric}_g - \frac{s_g}{n}g$, 
${\rm Ric}_g$ is the Ricci tensor, and $s_g'^*(f)$ is given by 
$$ 
s_g'^*(f)=D_gdf -(\tr_g f)g -f {\rm Ric}_g,
$$
where $D_gdf$ and  $\tr_g f$ are  the Hessian  and (negative) Laplacian of $f$, respectively.
From the variational problem for curvature functional, the {\it Besse conjecture} \cite{Be} describes that a non-trivial solution of the CPE
\be
z_g = D_gdf -(\tr_g f)g -f {\rm Ric}_g,  \label{cpe}
\ee
should be Einstein. By taking the trace of (\ref{cpe}), we have
 \be 
 \tr _g f=-\frac{s_g}{n-1}f \label{cpet}
 \ee 
 and therefore, using $z_g = {\rm Ric}_g - \frac{s_g}{n}g$, (\ref{cpe}) can be rewritten as 
 \be
 (1+f)z_g = Ddf + \frac{s_gf}{n(n-1)}g.\label{eqn2020-5-7-2}
 \ee
Since $g$ is clearly Einstein when $f=0$, going forward, we assume that $f$ is not trivial. 
Besse conjecture describes that such a nontrivial solution $(g,f)$ of  (\ref{cpe}) is Einstein.
Notably, some progress has been made to this conjecture. For example, 
 if the Riemannian manifold is locally conformally flat,
    then the metric is Einstein \cite{laf}. It is also known that 
    Besse conjecture holds if the manifold has a harmonic curvature \cite{ych}, \cite{erra},  
or the metric is Bach-flat \cite{qy}. 
We say that $(M, g)$ has a harmonic curvature if the divergence of the Riemannian curvature tensor $R$ vanishes, in other words,  
$\d R = 0$, and  $(M, g)$ is said to be Bach-flat  
when the Bach tensor vanishes. Very recently, it was shown \cite{bal} that the Besse conjecture holds if the complete divergence 
of the Weyl curvature tensor $\mathcal W$ is free, ${\rm div}^4 {\mathcal W} = 0$ and the radially Weyl curvature vanishes.
If $\min_M f \ge -1$, it is clear 
 that $(M, g)$ is Einstein. In fact, if we let $i_{\nabla f}z_g =z_g(\nabla f, \cdot)$,  
 the divergence of $i_{\nabla f}z_g$ can be computed as ${\rm div}\left(i_{\nabla f}z_g\right) = (1+f)|z_g|^2$.
Hence, by integrating it over $M$, we have $z_g=0$ from the divergence theorem.
As a $1$-form or a vector field, the quantity $i_{\nabla f}z_g$ has a crucial structural meaning in CPE. In \cite{ych}, we show that
$i_{\n f}z_g$ is parallel to $\n f$ when $(M, g)$ has a harmonic curvature, and this property plays an important  role in proving the main theorem.

First, we prove the Besse conjecture when $z_g$ is vanishing in the direction $\n f$, which is, in fact, a generalization of the main result in \cite{ych}.

\begin{thm} \label{thm2020-8-22-3}
Let $(g,f)$ be a nontrivial solution of (\ref{cpe}). If $z(\nabla f, X)=0$ for any vector field
 $X$ orthogonal to $\nabla f$, then $(M,g)$ is isometric to a standard sphere.
\end{thm}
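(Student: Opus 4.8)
The plan is to combine the hypothesis with the critical point equation to show that, on the open set $U=\{x\in M:\nabla f(x)\neq 0\}$, the metric is a warped product over a level hypersurface of $f$ with an Einstein fiber, and then to use the global geometry of the closed manifold $M$ to identify $(M,g)$ with a round sphere.

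\smallskip
\noindent\emph{Step 1: structural consequences of the hypothesis.} It is known that for a nontrivial CPE solution $s_g$ is a positive constant, and, as recalled in the introduction, if $\min_M f\ge -1$ then $z_g\equiv 0$; so I may assume $\min_M f<-1$. The hypothesis says exactly that $i_{\nabla f}z_g=\lambda\,df$ for a function $\lambda$ on $U$, i.e. $\nabla f$ is a Ricci eigenvector on $U$. Contracting (\ref{eqn2020-5-7-2}) with $\nabla f$ and a vector $X\perp\nabla f$ gives $Ddf(\nabla f,X)=0$, hence $\nabla_{\nabla f}\nabla f=\tfrac12\nabla|\nabla f|^{2}$ is proportional to $\nabla f$: the integral curves of $\nabla f/|\nabla f|$ are reparametrized geodesics, $|\nabla f|^{2}$ is a function of $f$ on $U$, and, using $i_{\nabla f}Ddf=\tfrac12 d|\nabla f|^{2}$ in (\ref{eqn2020-5-7-2}) again, so is $\lambda$. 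Writing $|\nabla f|^{2}=b(f)$ and using the arclength parameter $t$ along these geodesics, I get $g=dt^{2}+g_{t}$ near a regular level set $N=\{f=c\}$, with $f=f(t)$ and second fundamental form $h_{t}(X,Y)=\tfrac1{|\nabla f|}Ddf(X,Y)=\tfrac1{|\nabla f|}\big((1+f)z_g(X,Y)-\tfrac{s_g f}{n(n-1)}g_t(X,Y)\big)$ for $X,Y$ tangent to $N$.

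\smallskip
\noindent\emph{Step 2 (the crux): the level sets are totally umbilic.} Equivalently, $z_g$ restricted to $TN$ is pointwise a multiple of $g_t$; note that its $g_t$-trace equals $-\lambda$, a function of $f$, so umbilicity determines it completely and makes $h_t$ equal to a function of $t$ times $g_t$. This is the step I expect to be the main obstacle, since it does not follow from the eigenvector property alone. My approach is to differentiate $z_g(\nabla f,X)=0$ covariantly in a direction $Y$ tangent to $N$, substitute ${\rm div}_g z_g=0$ (valid because $s_g$ is constant) and the second Bianchi identity, and exploit that the shape operator $H_t=\tfrac1{|\nabla f|}\big((1+f)Z_N-\tfrac{s_g f}{n(n-1)}{\rm Id}\big)$ commutes with the endomorphism $Z_N$ induced by $z_g|_{TN}$: the antisymmetric part of the resulting identity then annihilates the Cotton-type term, and the symmetric part, read along the geodesics as a Riccati-type equation for the tracefree part of $h_t$ — a tensor that is bounded on the closed manifold — should force that tracefree part to vanish. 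An alternative route is an integral/Cauchy--Schwarz argument based on the identity $(1+f)|z_g|^{2}={\rm div}(i_{\nabla f}z_g)=\langle\nabla\lambda,\nabla f\rangle+\lambda\,{\rm div}(\nabla f)$ together with the pointwise bound $|z_g|^{2}\ge\tfrac{n}{n-1}\lambda^{2}$, equality holding precisely at umbilic points.

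\smallskip
\noindent\emph{Step 3: warped product and conclusion.} Granting Step 2, $g=dt^{2}+\phi(t)^{2}g_N$ is a warped product on $U$; comparing the $TN$-block of (\ref{eqn2020-5-7-2}) with the warped-product expression for ${\rm Ric}_g$ forces ${\rm Ric}_{g_N}$ to be proportional to $g_N$, so $(N,g_N)$ is Einstein. Inserting the warped product into the $(\partial_t,\partial_t)$ and $TN$ components of (\ref{eqn2020-5-7-2}) and using that $s_g$ is constant gives a closed system of ordinary differential equations for $\phi(t)$ and $f(t)$ on the interval of regular values. Finally, since $M$ is closed and $f$ is nonconstant, $f$ attains its extrema at critical points near which $\phi\to 0$; smoothness of $g$ there forces $(N,g_N)$ to be the unit round sphere $S^{n-1}$ and $\phi'\to\pm1$, and with these boundary conditions the ODE system can be integrated explicitly and yields only the round-sphere profiles, equivalently $z_g\equiv 0$ on $U$ and hence on $M$. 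Then (\ref{eqn2020-5-7-2}) reduces to $Ddf=-\tfrac{s_g}{n(n-1)}f\,g$ with $s_g>0$, and Obata's theorem shows that $(M,g)$ is isometric to a standard sphere.
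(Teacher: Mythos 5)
Your Step 1 is sound and agrees with the paper's preliminary lemmas (Lemmas~\ref{lem2020-8-12-1}--\ref{lem2020-8-23-2}). The genuine gap is Step 2, which you yourself identify as the crux and then do not prove: neither of the two routes you sketch closes. The Riccati equation for the tracefree part $A$ of the shape operator along the normal geodesics has the schematic form $\dot A+A\cdot h+h\cdot A+\mathring R_N=0$, and the curvature source term contains the uncontrolled Weyl component $\mathcal W(\cdot,N,\cdot,N)$; boundedness of $A$ on a closed manifold does not force $A=0$ without a sign or vanishing of that term, so ``should force that tracefree part to vanish'' is a hope, not an argument. The integral alternative also fails: the divergence identity gives $\int_M(1+f)|z|^2=0$ globally and $\int_{f\le c}(1+f)|z|^2=\int_{f=c}\alpha|\nabla f|$ over sublevel sets, and because $1+f$ changes sign these only yield $\alpha\le 0$ (this is exactly Lemma~\ref{lem2020-8-26-2}); they do not force the equality case of $|z|^2\ge\frac{n}{n-1}\alpha^2$. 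So the key intermediate statement --- umbilicity of the level sets, equivalently $T=0$ --- is asserted but not established. A secondary gap sits in Step 3: your ODE analysis presupposes that $f$ has isolated critical points (``near which $\phi\to 0$''), and ruling out a nondiscrete extremal set is itself nontrivial; the paper's Theorem~\ref{thm2019-12-23-1} does this by showing such a minimum set would be a totally geodesic, stable hypersurface with ${\rm Ric}(\nu,\nu)<0$ and deriving a contradiction via the Fredholm alternative.

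For comparison, the paper's route to umbilicity is entirely different from either of your sketches. After establishing the two-critical-point structure, it introduces the auxiliary metric $\bar g=\frac{df}{|\nabla f|}\otimes\frac{df}{|\nabla f|}+|\nabla f|^2 g_\Sigma$, shows that $\nabla f$ is a conformal Killing field for $\bar g$ (Lemma~\ref{lemt1}), and invokes Tashiro's theorem together with the result of Brozos-V\'azquez--Garc\'ia-R\'io--V\'azquez-Lorenzo to conclude that each regular fiber is a round sphere and that $g$ itself is a warped product over an interval. Comparing the warped-product form of $Ddf$ with the CPE then yields $z(E_i,E_j)=-\frac{\alpha}{n-1}\delta_{ij}$ on each level set, i.e.\ umbilicity, hence $T=0$ by Lemma~\ref{lem2019-5-28-5} (this is Lemma~\ref{lem2019-6-22-1}); finally $T=0$ gives $C=0$ and harmonic curvature by an integration of the Cotton-tensor identity (Theorem~\ref{thm2018-1-20-11}), and Einstein plus Obata finish the proof. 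To make your outline into a proof you would need to supply the missing argument for Step 2 (and the isolatedness of the critical points); as written, the central step is a plan rather than a demonstration.
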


The second objective of this paper is to prove the Besse conjecture under the condition of positive isotropic curvature on $M$. 
Let $(M^n, g)$ be an $n$-dimensional Riemannian manifold with $n \ge 4$. 
The Riemannian metric $g = \langle \,\, ,\, \rangle$ can be extended either
to a {\it complex bilinear form} $(\,\, , \, )$ or a {\it Hermitian inner product} 
$\llangle\,\,, \, \rrangle$ on each complexified tangent space  $T_pM \otimes {\Bbb C}$ for $p \in M$.
A complex $2$-plane $\sigma \subset T_pM\otimes {\Bbb C}$ is {\it totally isotropic}
 if $ (Z, Z) = 0$ for any $Z \in \sigma$. For any $2$-plane $\sigma \subset T_pM\otimes {\Bbb C}$, we can define the
 complex sectional curvature of $\sigma$ with respect to $\llangle  \,  , \, \rrangle$ by
 \be
 {\rm K}_{\Bbb C}(\sigma) = \llangle {\mathcal R}(Z \wedge W), Z\wedge W\rrangle,
 \label{eqn2019-5-17-1}
 \ee
 where $\mathcal R : \Lambda^2 T_pM \to \Lambda^2 T_pM$ is the curvature operator and 
 $\{Z, W\}$ is a unitary basis for $\sigma$ with respect to $\llangle \,  , \, \rrangle$.
 
  A Riemannian $n$-manifold $(M^n, g)$ is said to have a
 {\it positive isotropic curvature} {\rm(PIC in short)} if
  the complex sectional curvature on the isotropic planes is positive, that is,
 for any totally isotropic $2$-plane $ \sigma \subset T_pM\otimes {\Bbb C}$,
 \be
 {\rm K}_{\Bbb C}(\sigma) >0. \label{eqn2018-4-22-1}
 \ee
 If $(M, g)$ has a positive curvature operator, then, it has a PIC  \cite{mm88}. 
Thus, a standard sphere $({\Bbb S}^n, g_0)$ has a PIC. 
Additionally, if the sectional curvature of $(M, g)$  is pointwise strictly quarter-pinched, then  
$(M, g)$ has a PIC \cite{mm88}.
  It is well-known that the product metric on ${\Bbb S}^{n-1}\times {\Bbb S}^1$ also has
  a PIC and the connected sum of manifolds with a PIC admits a PIC metric \cite{m-w}.
  The existence of Riemannian metrics with a PIC on compact manifolds which fiber
  over the circle is discussed in \cite{lab}. One of the main results on manifolds with a positive isotropic
  curvature is that, a simply connected compact $n$-dimensional Riemannian manifold with positive
  isotropic curvature is homeomorphic to a sphere \cite{mm88}. 
  In another major result relating the topology of a positive isotropic
  curvature manifold, it is proved \cite{mm88} that the homotopy groups $\pi_i(M) = 0$ for $2 \le i \le [\frac{n}{2}]$ when $\dim(M) = n$, and the fundamental group $\pi_1(M)$ cannot contain any subgroup
  isomorphic to the fundamental group of a closed orientable surface \cite{fra}, \cite{f-w}.
  For even-dimensional manifolds, it was proved \cite{m-w}, \cite{sea} that a PIC implies the vanishing of the second Betti number.
  On the contrary, a PIC implies that $g$ has a positive scalar curvature \cite{m-w}.
  More details about the PIC are provided in \cite{c-h} or \cite{ses} and the references are also presented therein.

 Our second main result can be stated as follows..

\begin{thm} \label{thm1}
Let $M$ be a compact $n$-dimensional smooth manifold with $n\geq 4$.  
If $(g,f)$ is a nontrivial solution of (\ref{cpe}) and $(M, g)$ has positive isotropic curvature, 
then $(M,g)$ is isometric to a standard sphere.
\end{thm}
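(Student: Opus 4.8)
The plan is to deduce Theorem~\ref{thm1} from Theorem~\ref{thm2020-8-22-3}. By that theorem it suffices to show that the $1$-form $i_{\nabla f}z_g=z_g(\nabla f,\cdot)$ is, at each point, proportional to $df$, i.e. $z_g(\nabla f,X)=0$ for every $X$ orthogonal to $\nabla f$; the positive isotropic curvature will be used, through an integral identity, precisely to force this.

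The first step is an exact formula. Evaluating \eqref{eqn2020-5-7-2} on $\nabla f$ and using $Ddf(\nabla f,\cdot)=\tfrac12\,d|\nabla f|^2$, one gets on the open set $\{f\neq-1\}$
\[
i_{\nabla f}z_g=\frac{1}{1+f}\,d\varphi,\qquad \varphi:=\tfrac12|\nabla f|^2+\frac{s_g}{2n(n-1)}f^2 ,
\]
so $i_{\nabla f}z_g$ is parallel to $df$ iff $df\wedge d\varphi=0$. Differentiating gives $d(i_{\nabla f}z_g)=-(1+f)^{-2}\,df\wedge d\varphi$, and since $s_g$ is constant a short computation — in which the terms quadratic in $z_g$ cancel by symmetry of $z_g$ after substituting \eqref{eqn2020-5-7-2} for the Hessian — identifies this $2$-form with the contraction $i_{\nabla f}C$ of the Cotton tensor $C$ with $\nabla f$. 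Hence it is enough to prove $i_{\nabla f}C\equiv0$ on $\{f\neq-1\}$. The complement $\{f=-1\}$ has empty interior, since on any open set where $f\equiv-1$ equation \eqref{eqn2020-5-7-2} forces $s_g=0$, contradicting that a PIC metric has $s_g>0$; so once $i_{\nabla f}z_g\parallel df$ on the dense set $\{f\neq-1\}$ it holds everywhere by continuity, and Theorem~\ref{thm2020-8-22-3} applies.

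The second step is to obtain $i_{\nabla f}C=0$ from an integral identity. Using that $\delta\mathcal W$ is, up to the factor $\tfrac{n-3}{n-2}$, the Cotton tensor $C$ (here $n\geq4$), I would integrate $\int_M w\,|i_{\nabla f}C|^2\,dv_g$ against a suitable positive weight $w=w(f)$ and integrate by parts using the second Bianchi identity and \eqref{eqn2020-5-7-2}: each derivative of $\nabla f$ produced by the integration by parts is turned by the CPE into a term linear in $z_g$, while the pure-trace part of the CPE disappears because $\mathcal W$ is trace-free. After the cancellations permitted by $\operatorname{div}z_g=0$ and the trace identity \eqref{cpet}, the boundary-free identity should take the shape
\[
\int_M w\,|i_{\nabla f}C|^2\,dv_g \;=\; -\int_M \widetilde w\;\mathcal Q(\mathcal R;\nabla f,z_g)\,dv_g ,
\]
where $\widetilde w=\widetilde w(f)$ has a definite sign and $\mathcal Q(\mathcal R;\nabla f,z_g)$ is a contraction of the curvature operator against a bivector configuration built from $\nabla f$ and the components $z_g(\nabla f,\cdot)$ that we want to annihilate.

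The step I expect to be the main obstacle is to check that $\mathcal Q$ has the sign dictated by the PIC inequalities, so that the last identity forces $i_{\nabla f}C\equiv0$, hence $z_g(\nabla f,X)=0$ for $X\perp\nabla f$. In an orthonormal frame with $e_1=\nabla f/|\nabla f|$, $\mathcal Q$ is an expression in the mixed curvature components $R_{1a1b}$ and the entries $z_g(e_1,e_a)$; one must dominate it using the isotropic-curvature inequality $R_{1313}+R_{1414}+R_{2323}+R_{2424}>2R_{1234}$ together with its polarizations, after a final use of the CPE to pass from the Weyl part of $\mathcal Q$ to the full curvature tensor and to dispose of the Ricci-type remainder that PIC by itself does not control (this is where tracelessness of $z_g$ and constancy of $s_g$ should bring the curvature sum into the very form constrained by the PIC inequalities). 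In other words, the crux is a pointwise algebraic lemma: for a curvature operator with positive isotropic curvature, $\mathcal Q(\mathcal R;\nabla f,z_g)\ge0$, with equality only if $z_g(\nabla f,X)=0$ for all $X\perp\nabla f$. Granting this, $i_{\nabla f}z_g$ is parallel to $df$, and Theorem~\ref{thm2020-8-22-3} yields that $(M,g)$ is isometric to a standard sphere.
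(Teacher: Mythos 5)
Your reduction is the right one and matches the paper's: by Theorem~\ref{thm2020-8-22-3} it suffices to kill $z(\nabla f,X)$ for $X\perp\nabla f$, and your identities $i_{\nabla f}z=(1+f)^{-1}d\varphi$ and $d(i_{\nabla f}z)=\tilde i_{\nabla f}C=-(1+f)^{-1}\,df\wedge i_{\nabla f}z$ are correct (the last is Lemma~\ref{lem191} combined with Lemma~\ref{lem2018-4-30-20}). But the heart of the argument --- how positive isotropic curvature forces this object to vanish --- is left as an undefined quantity $\mathcal Q$ together with an unproven ``pointwise algebraic lemma,'' and as stated the plan would not go through: PIC does not control the sign of a generic contraction of $\mathcal R$ against bivectors built from $\nabla f$ and $i_{\nabla f}z$. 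What PIC controls is precisely the curvature term $\langle E(\omega),\omega\rangle$ in the Bochner--Weitzenb\"ock formula for $2$-forms (Micallef--Wang, and Zhu's Proposition~2.3), so the computation must be arranged to produce exactly that term, which your integration-by-parts scheme starting from $\int w\,|i_{\nabla f}C|^2$ does not do.

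The paper's actual mechanism, which you are missing, is the following. Set $\omega=df\wedge i_{\nabla f}z$. This is a \emph{closed} $2$-form (your own formula $d(i_{\nabla f}z)=-(1+f)^{-1}\omega$ already shows $d\omega=-df\wedge d(i_{\nabla f}z)=0$), and it is \emph{decomposable}: in a frame with $E_1=N$ one has $\omega(E_j,E_k)=0$ for $j,k\ge2$, so $\omega=u\,\theta^1\wedge\theta^2$. This rank-$\le2$ structure yields the pointwise inequality $|D\omega|^2\ge|\delta\omega|^2$ on the set $\{\omega\ne0\}$ (Lemma~\ref{compr2}). Writing the Weitzenb\"ock formula $\tfrac12\Delta|\omega|^2=\langle\Delta\omega,\omega\rangle+|D\omega|^2+\langle E(\omega),\omega\rangle$ with $\Delta\omega=-d\delta\omega$ (closedness), and integrating over a connected component $\Omega_0$ of $\{\omega\ne0\}$ --- on whose boundary $\omega$ vanishes --- gives $0=\int_{\Omega_0}\bigl(|D\omega|^2-|\delta\omega|^2+\langle E(\omega),\omega\rangle\bigr)$, which contradicts PIC unless $\Omega_0=\emptyset$. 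Without identifying this specific $2$-form, its closedness and decomposability, and the $|D\omega|^2\ge|\delta\omega|^2$ estimate, the positivity you need cannot be extracted from PIC; so the proposal has a genuine gap at its central step.
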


Due to \cite{ych} and \cite{erra}, it suffices to prove that $(M,g)$ has harmonic curvature in Theorem~\ref{thm1}. 
Then, $g$ should be Einstein, which implies that $(M, g)$ is isometric to a standard sphere due to  Obata \cite{Ob}.
To show that $(M, g)$ has a harmonic curvature, we introduce a $2$-form, $df\wedge i_{\nabla f}z_g$, consisting of
the total differential, $df$, of the potential function $f$ and the traceless Ricci tensor
$z_g$  and prove that it vanishes when $(M, g)$ has positive isotropic curvature.



\vskip .5pc

\noindent
{\bf Notations: }  Hereafter, for convenience and simplicity, we denote curvatures 
${\rm Ric}_g, z_g, s_g$, and   the Hessian and Laplacian of $f$, $D_gdf, \Delta_g$  by
$r, z, s$, and $Ddf, \Delta$, respectively, if there is no ambiguity.
We also use the notation $\langle \,\, ,\,\, \rangle$ for metric $g$ or inner product induced by $g$ on tensor spaces.

\section{Some Preliminaries and tensors}

As a preliminary as well as for subsequent use, we recall the Cotton tensor and briefly describe the properties of 
the Cotton tensor. Additionally, we introduce a structural tensor $T$, which plays a key role in
proving our main theorems (For the definition of $T$, refer (\ref{defnt}) ). 
This structural tensor $T$ has deep relations to the critical point equation (\ref{cpe})
 and the Cotton tensor (cf. \cite{h-y}).

\vskip .5pc

\subsection{Cotton tensor}

Let $(M^n, g)$ be a Riemannian manifold of dimension $n$ with the Levi-Civita
 connection $D$. We begin with a differential operator acting on the space 
 of symmetric $2$-tensors. Let $b$ be a symmetric $2$-tensor on $M$. The
differential $d^Db$ can be defined as follows:
$$ 
d^Db(X, Y, Z) = D_Xb(Y, Z) - D_Yb(X, Z) 
$$
 for any vectors $X, Y$ and $Z$. The {\it Cotton tensor} 
 $C \in\Gamma(\Lambda^2 M \otimes T^*M)$ is defined by 
 \be 
 C = d^D \left(r - \frac{s}{2(n-1)} g\right) 
 = d^Dr - \frac{1}{2(n-1)} ds \wedge g. \label{eqn2017-4-1-1}
 \ee 
  It is known that, for $n=3$, 
$C=0$ if and only if $(M^3, g)$ is locally conformally flat.
 Moreover, for $n \ge 4$, the vanishing of the Weyl
tensor $\mathcal W$ implies the vanishing of the Cotton tensor $C$, while
$C=0$ corresponds to the Weyl tensor being harmonic, i.e, $\d \mathcal W
= 0$ due to the identity \cite{Be}: 
\be 
\d \mathcal W = - \frac{n-3}{n-2}d^D \left(r - \frac{s}{2(n-1)} g\right) 
= - \frac{n-3}{n-2} C\label{eqn2016-12-3-16} 
\ee 
under the following identification
$$ 
\Gamma(T^*M\otimes \Lambda^2M) \equiv \Gamma(\Lambda^2M \otimes T^*M). 
$$


Let $\{E_i\}$ be a local frame with $C(E_i, E_j, E_k) = C_{ijk}$. Then, it follows from (\ref{eqn2017-4-1-1}) that 
$$
C_{ijk} = R_{jk;i}-R_{ik;j} - \frac{1}{2(n-1)} S_i \d_{jk} + \frac{1}{2(n-1)} S_j \d_{ik}, 
$$ 
where $r(E_i, E_j) = R_{ij}, \, R_{ij;k} = D_{E_k}r(E_i, E_j)$ and $ds(E_i) = S_i.$
The first direct observation on $C$ is that the cyclic summation of $C_{ijk}$ is
 vanishing. The second observation on $C$ is that $C_{ijk}$ is
skew symmetric for the first two components, which implies that
$C(X, X, \cdot) = 0$ for any vector  $X$, and trace-free in any two indices.
 Since $\displaystyle{r = z + \frac{s}{n}g}$, equation  (\ref{eqn2017-4-1-1}) can be
  rewritten as 
 \bea
 C = d^D z + \frac{n-2}{2n(n-1)}ds \wedge g.\label{eqn2017-4-2-1} 
 \eea
 In the local coordinates, we have
 $$
C_{ijk} = z_{jk;i} - z_{ik;j} + \frac{n-2}{2n(n-1)} (S_i \d_{jk} - S_j \d_{ik}).
$$ 
In the CPE, since the scalar curvature $s$ is constant, we have
$$
C = d^D r = d^D z
$$
and
$$
C_{ijk}z_{jk;i}  =  |C|^2 + C_{ijk}z_{ik;j} = |C|^2 - C_{jik}z_{ik;j}.
$$ 
Here, we follow the Einstein convention for indices.
i.e., 
\bea
\frac{1}{2}|C|^2=   C_{ijk}z_{jk;i} = C(E_i, E_j, E_k)D_{E_i}z(E_j, E_k).\label{eqn2017-4-2-2} 
\eea
Moreover, 
\be 
\frac{1}{2}|C|^2 
&=& 
C_{ijk}z_{jk;i} = (C_{ijk}z_{jk})_{;i} - C_{ijk;i}z_{jk} \nonumber\\ 
&=&
(C_{ijk}z_{jk})_{;i} + \langle \d C, z\rangle,\label{eqn2018-1-23-1} 
\ee
where $\d C (X, Y) = -{\rm div}C(X, Y)  = -D_{E_i} C(E_i, X, Y)$.

\vspace{.13in}
\subsection{ The  tensor $T$}



We now define a $3$-tensor $T$ by  
\be
T= \frac 1{n-2}\, df  \wedge z +\frac 1{(n-1)(n-2)}\, i_{\nabla f }z \wedge g. \label{defnt}
\ee
Recall that $i_{\n f}$ denotes the (usual) interior product to the first factor defined by
$i_{\n f}z(X)  = z(\n f, X)$ for any vector $X$.
As the Cotton tensor $C$, the cyclic summation of $T_{ijk}$ vanishes.
It is also easy to see that the trace of $T$ in any two summands vanishes. In fact, since $T(X, Y, Z) = - T(Y, X, Z)$, we have,
${\rm tr}_{12} T = 0$, and from the definition of $T$ together with ${\rm tr}(z) = 0$, we can show ${\rm tr}_{13}T = 0$ directly. 
Finally, since the cyclic summation of $T$ vanishes, we have
${\rm tr}_{23} T = -{\rm tr}_{12}T - {\rm tr}_{13}T =  0$.
The tensor $T$ looks similar to the tensor in \cite{CC} (cf. \cite{k-o}) 
where the authors used this in
 classifying the complete Bach flat gradient shrinking Ricci solitons.

The first relation of the Cotton tensor $C$ to the tensor $T$ is the following
for the CPE.

\begin{lem}\label{lem2019-5-23-10}
Let $(M^n, g, f)$ be a non-trivial solution of the CPE. Then
\bea
(1+f) C = {\tilde i}_{\n f} \mathcal W - (n-1)T.\label{eqn2017-6-12-10-1}
\eea
Here $\tilde{i}_{X}$ is the interior product to the last factor defined by
$\tilde{i}_{X}{\mathcal W} (Y, Z, U)= {\mathcal W}(Y, Z, U, X)$ for any vectors
$ Y, Z$, and $U$.
\end{lem}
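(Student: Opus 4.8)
The plan is to establish the identity $(1+f)C = \tilde{i}_{\nabla f}\mathcal{W} - (n-1)T$ by a direct local computation, starting from the Weyl tensor decomposition and the critical point equation. Recall that in dimension $n$ the Weyl tensor satisfies
\be
\mathcal{W}_{ijkl} = R_{ijkl} - \frac{1}{n-2}\left(R_{ik}\d_{jl} - R_{il}\d_{jk} + R_{jl}\d_{ik} - R_{jk}\d_{il}\right) + \frac{s}{(n-1)(n-2)}\left(\d_{ik}\d_{jl} - \d_{il}\d_{jk}\right). \label{weyldecomp}
\ee
First I would contract this with $\nabla f$ in the last slot to get an explicit expression for $\tilde{i}_{\nabla f}\mathcal{W}(E_i, E_j, E_k)$ in terms of $R_{ijkl}f^l$ (where $f^l = df(E_l)$), the Ricci tensor, $i_{\nabla f}r$, and $df$. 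The term $R_{ijkl}f^l$ is the one that will eventually produce the Cotton tensor, via the contracted second Bianchi identity together with the divergence of the Hessian.

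The key computational input is the CPE itself in the form $(1+f)z = Ddf + \frac{sf}{n(n-1)}g$, equivalently $(1+f)R_{ij} = f_{;ij} + \frac{(nf+n-1)s}{n(n-1)}\d_{ij}$. The strategy is to compute the ``Cotton-type'' antisymmetrization $D_i\big[(1+f)R_{jk}\big] - D_j\big[(1+f)R_{ik}\big]$ in two ways. On one hand, using $ds = 0$ and the CPE, this equals the antisymmetrized third derivative of $f$, namely $f_{;jki} - f_{;ikj}$, which by the Ricci identity is $-R_{ijkl}f^l$ (up to sign conventions to be fixed). On the other hand, expanding the left side by the Leibniz rule gives $(1+f)(R_{jk;i} - R_{ik;j}) + f_i R_{jk} - f_j R_{ik} = (1+f)C_{ijk} + f_i R_{jk} - f_j R_{ik}$, where I have used that $C = d^D r$ in the CPE setting. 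Equating the two expressions yields $(1+f)C_{ijk} = -R_{ijkl}f^l - f_i R_{jk} + f_j R_{ik}$, and rewriting $R_{jk} = z_{jk} + \frac{s}{n}\d_{jk}$ converts the last two terms into $-\big(df\wedge z\big)_{ijk}$ plus a multiple of $df\wedge g$.

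Next I would substitute the Weyl decomposition \eqref{weyldecomp} to replace $R_{ijkl}f^l$ by $\mathcal{W}_{ijkl}f^l = \tilde{i}_{\nabla f}\mathcal{W}(E_i,E_j,E_k)$ plus the Ricci/scalar correction terms; the $df\wedge g$ contributions from the scalar part should cancel against those already present, since both $C$ and $T$ are totally trace-free and have vanishing cyclic sum (these structural facts, already noted in the excerpt, serve as useful consistency checks). Collecting the terms involving $i_{\nabla f}r = i_{\nabla f}z$ and $df\wedge z$, and comparing with the definition \eqref{defnt} of $T$, the coefficients $\frac{1}{n-2}$ and $\frac{1}{(n-1)(n-2)}$ should appear precisely so that the Ricci-contraction terms assemble into $-(n-1)T$.

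The main obstacle I anticipate is purely bookkeeping: getting the sign and index conventions consistent across the Ricci identity for the commutator of third covariant derivatives of $f$, the sign convention in $C_{ijk} = R_{jk;i} - R_{ik;j} - \cdots$ used in the paper, and the convention $\tilde{i}_X\mathcal{W}(Y,Z,U) = \mathcal{W}(Y,Z,U,X)$ for contracting the \emph{last} slot of $\mathcal{W}$ (as opposed to $i_{\nabla f}$ acting on the \emph{first} slot of $z$). A careful choice of a frame — ideally a local orthonormal frame geodesic at the point, so that first derivatives of the frame vanish — will keep the Leibniz expansions clean. Once the conventions are pinned down, the identity follows by matching coefficients of the three tensor types $\tilde{i}_{\nabla f}\mathcal{W}$, $df\wedge z$, and $i_{\nabla f}z\wedge g$ on both sides, with the $df\wedge g$ terms cancelling automatically.
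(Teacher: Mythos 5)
Your proposal is correct and follows essentially the same route as the paper: the antisymmetrization $D_i[(1+f)R_{jk}] - D_j[(1+f)R_{ik}]$ is exactly the operation $d^D$ applied to the CPE, which is all the paper's proof does, except that the paper cites \cite{ych} for the resulting identity $(1+f)d^Dz = \tilde{i}_{\nabla f}\mathcal{W} - \frac{n-1}{n-2}df\wedge z - \frac{1}{n-2}i_{\nabla f}z\wedge g$ while you derive it directly from the Weyl decomposition. The bookkeeping does close up as you predict: the scalar-curvature contributions to $df\wedge g$, namely $\frac{s}{(n-1)(n-2)} + \frac{s}{n-1} = \frac{s}{n-2}$, cancel exactly against the $\frac{s}{n}$-terms produced by rewriting $r = z + \frac{s}{n}g$ inside $df\wedge r$ and $i_{\nabla f}r\wedge g$, leaving $-\frac{n-1}{n-2}\,df\wedge z - \frac{1}{n-2}\,i_{\nabla f}z\wedge g = -(n-1)T$.
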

\begin{proof}
Taking the differential $d^D$ in (\ref{cpe}), we have
\bea
(1+f)d^D z = {\tilde i}_{\n f}\mathcal W - \frac{n-1}{n-2}df\wedge z -
\frac{1}{n-2}i_{\n f}z \wedge g.\label{eqn2016-12-3-10-1} 
\eea
(For more details about this, refer \cite{ych}). 
 From the definition of $T$ together with $C = d^Dz$, we obtain
\bea
(1+f) C = {\tilde i}_{\n f} \mathcal W - (n-1)T.
\eea
\end{proof}

\vspace{.13in}
Now, we derive certain expressions relating $T$ and $C$  to their divergences. 
For a symmetric $2$-tensor $b$ on a Riemannian manifold $(M^n, g)$, 
we define ${\mathring {\mathcal W}}(b)$ by
$$ 
{\mathring {\mathcal W}}(b)(X, Y) = \sum_{i=1}^n b(\mathcal W (X, E_i)Y, E_i) 
= \sum_{i, j=1}^n \mathcal W (X, E_i, Y, E_j)b(E_i, E_j) 
$$ 
for any local frame $\{E_i\}$. For the Riemannian curvature tensor $R$,
${\mathring R}(b)$ is similarly defined.


\begin{lem}\label{lem2018-2-10-1} 
Let $(g, f)$ be a solution of the CPE. Then 
\bea
\d ({\tilde i}_{\n f} \mathcal W) = - \frac{n-3}{n-2} \widehat C 
+ (1+f) {\mathring {\mathcal W}}(z), 
\label{eqn2018-2-10-1}
\eea
where ${\widehat C}$ is a $2$-tensor defined as 
$$ 
{\widehat C}(X, Y) = C(Y, \n f, X) 
$$ 
for any vectors $X, Y$. 
\end{lem}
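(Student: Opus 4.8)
The plan is to compute $\d(\tilde i_{\n f}\mathcal W)$ straight from the definitions and then feed in the two structural inputs available: the contracted second Bianchi identity (\ref{eqn2016-12-3-16}) for the Weyl tensor and the CPE in the form (\ref{eqn2020-5-7-2}). Since $\tilde i_{\n f}\mathcal W$ is a section of $\Lambda^2M\otimes T^*M$, using the same divergence convention adopted for $C$ and the Leibniz rule for the covariant derivative of a contraction, I would first write, in a local orthonormal frame $\{E_i\}$,
\bea
\d(\tilde i_{\n f}\mathcal W)(X,Y)
& = & -\sum_i D_{E_i}\big(\mathcal W(E_i,X,Y,\n f)\big)\\
& = & -\sum_i (D_{E_i}\mathcal W)(E_i,X,Y,\n f)\ -\ \sum_i \mathcal W(E_i,X,Y,D_{E_i}\n f).
\eea
The essential, easily overlooked, point is that $D_{E_i}$ also differentiates the vector field $\n f$ sitting inside the interior product, and this is exactly what generates the $\mathring{\mathcal W}(z)$ term.

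For the second sum I would substitute $D_{E_i}\n f=\sum_j Ddf(E_i,E_j)E_j$ and use the antisymmetry $\mathcal W(E_i,X,Y,E_j)=-\mathcal W(X,E_i,Y,E_j)$ to recognize it as $\mathring{\mathcal W}(Ddf)(X,Y)$; then, inserting the CPE in the form $Ddf=(1+f)z-\frac{sf}{n(n-1)}g$ coming from (\ref{eqn2020-5-7-2}) and noting that $\mathring{\mathcal W}(g)$ is a trace of the (totally trace-free) Weyl tensor and hence vanishes, this sum becomes $(1+f)\mathring{\mathcal W}(z)(X,Y)$. For the first sum, I would observe that it is a contraction of $D\mathcal W$ over an index belonging to the first pair of $\mathcal W$, so it is governed by $\d\mathcal W=-\frac{n-3}{n-2}C$ from (\ref{eqn2016-12-3-16}); matching the summed index together with the three free arguments $X$, $Y$, $\n f$ against that identity — while respecting that the identification $\Gamma(T^*M\otimes\Lambda^2M)\equiv\Gamma(\Lambda^2M\otimes T^*M)$ used there reorders the free slots — gives precisely $-\frac{n-3}{n-2}\,C(Y,\n f,X)=-\frac{n-3}{n-2}\,\widehat C(X,Y)$. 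Adding the two contributions yields (\ref{eqn2018-2-10-1}).

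The main obstacle will be exactly this index bookkeeping in the first sum: a careless invocation of $\d\mathcal W=-\frac{n-3}{n-2}C$ produces \emph{some} contraction of the Cotton tensor, which need not coincide with $\widehat C$; one has to use the pair-exchange symmetry of $\mathcal W$, the antisymmetry and vanishing cyclic sum of $C$, and the precise slot identification adopted in (\ref{eqn2016-12-3-16}) to see that the contraction occurring here is indeed $\widehat C(X,Y)=C(Y,\n f,X)$. Once that is pinned down, the rest is a routine computation with the Leibniz rule and the critical point equation.
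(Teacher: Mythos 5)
Your proposal is correct and follows essentially the same route as the paper: a Leibniz-rule expansion of the divergence into a $\d\mathcal W$ term (handled via $\d\mathcal W=-\frac{n-3}{n-2}C$) plus a term from differentiating $\n f$, which becomes $\mathring{\mathcal W}(Ddf)=(1+f)\mathring{\mathcal W}(z)$ by the CPE and the trace-freeness of the Weyl tensor. The slot-matching issue you flag is real but is resolved exactly as you indicate, and the paper's own proof is no more detailed on that point.
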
 
\begin{proof} 
Let $\{E_i\}$ be a local frame normal at a point in $M$. At the point, it follows from definition
 together with the CPE that
\bea 
\d ({\tilde i}_{\n f} \mathcal W) (X, Y) = \d \mathcal W(X, Y, \n f) - Ddf(E_i, E_j) 
\mathcal W (E_i, X, Y, E_j).
\eea
Since the trace of $\mathcal W$ in the first and fourth components  vanishes, applying
(\ref{eqn2016-12-3-16}), we obtain
$$
\d \mathcal W(X, Y, \n f) - Ddf(E_i, E_j) \mathcal W (E_i, X, Y, E_j)
=  - \frac{n-3}{n-2}C(Y, \n f, X) + (1+f) {\mathring {\mathcal W}}(z) (X, Y). 
$$
\end{proof}

Moreover, the divergence of the tensor ${\widehat C}$ has 
the following form.

\begin{lem}\label{lem2018-2-14-10} 
For any vector $X$, we have 
\bea
\d {\widehat C} (X) = -(1+f)\langle i_X C, z\rangle.\label{eqn2018-2-9-2-10} 
\eea
\end{lem}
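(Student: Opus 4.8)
The plan is to differentiate the defining relation $\widehat C(X,Y)=C(Y,\nabla f,X)$ directly. Fix $p\in M$, choose a local frame $\{E_i\}$ normal at $p$, and extend $X$ so that $DX=0$ at $p$; then at $p$ every term involving $D_{E_i}E_i$ or $D_{E_i}X$ drops out and
\[
\d\widehat C(X)=-\sum_i(D_{E_i}\widehat C)(E_i,X)=-\sum_iE_i\!\left(C(X,\nabla f,E_i)\right)=-\sum_i(D_{E_i}C)(X,\nabla f,E_i)-\sum_{i,j}Ddf(E_i,E_j)\,C(X,E_j,E_i).
\]
So the lemma reduces to showing that the first sum vanishes and the second equals $(1+f)\langle i_XC,z\rangle$.

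For the second sum I would substitute the CPE in the form (\ref{eqn2020-5-7-2}), $Ddf=(1+f)z-\frac{sf}{n(n-1)}g$. The pure-trace piece gives $\sum_iC(X,E_i,E_i)$, which is $0$ since $C$ is trace-free in its last two arguments; the remaining piece is $(1+f)\sum_{i,j}z(E_i,E_j)\,C(X,E_j,E_i)$, and because $z$ is symmetric a relabeling of indices rewrites this as $(1+f)\sum_{i,j}z(E_i,E_j)\,C(X,E_i,E_j)=(1+f)\langle i_XC,z\rangle$. Thus the Hessian term already produces the asserted right-hand side, and the content of the lemma is that the leftover term $\sum_i(D_{E_i}C)(X,\nabla f,E_i)$ is zero.

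Writing $\nabla f=\sum_jf_jE_j$, this leftover term is the value on $(X,\nabla f)$ of the $2$-tensor $(Y,Z)\mapsto\sum_i(D_{E_i}C)(Y,Z,E_i)$, i.e. the divergence of the Cotton tensor in its \emph{third} slot, so it is enough to prove that this $2$-tensor vanishes identically. Since $s$ is constant we have $C=d^Dz$ and ${\rm div}\,z=0$, hence $\sum_k(D_{E_k}C)(E_i,E_j,E_k)=\sum_k(z_{jk;ik}-z_{ik;jk})$. Commuting the two covariant derivatives in each term by the Ricci identity, the reordered pieces $\sum_kz_{jk;ki}$ and $\sum_kz_{ik;kj}$ are, at $p$, the $i$- and $j$-derivatives of the identically vanishing functions $({\rm div}\,z)(E_j)$ and $({\rm div}\,z)(E_i)$ and hence vanish, while the curvature corrections are, up to the sign of the Ricci contraction, $-\sum_{k,l}R_{kijl}z_{lk}-\sum_lR_{il}z_{jl}$ and $-\sum_{k,l}R_{kjil}z_{lk}-\sum_lR_{jl}z_{il}$. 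The two quadratic-in-$R$ terms cancel after relabeling by the pair symmetry of the curvature tensor, and the remaining two cancel because $\sum_lR_{il}z_{jl}=\sum_lz_{il}z_{jl}+\frac{s}{n}z_{ij}$ is symmetric in $i$ and $j$; hence the third-slot divergence of $C$ is zero and the lemma follows.

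The only real obstacle is bookkeeping: one must keep the symmetries of $C$ (skew in the first two slots, cyclic sum zero, totally trace-free) and of the curvature tensor, together with the chosen Riemann sign convention, straight while commuting the second covariant derivatives. No geometric input beyond the CPE, the constancy of $s$ (equivalently ${\rm div}\,z=0$), and the second Bianchi/Ricci identities enters.
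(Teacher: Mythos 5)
Your proposal is correct and follows essentially the same route as the paper: differentiate $\widehat C$, use the CPE to replace $Ddf$ (the trace part dying by trace-freeness of $C$ and the $z$-part producing $-(1+f)\langle i_XC,z\rangle$), and observe that the leftover third-slot divergence of $C$ vanishes. The only difference is that you establish that last vanishing directly from the Ricci identity and ${\rm div}\,z=0$, whereas the paper cites the equivalent structural facts (symmetry of $\d C$ together with the vanishing cyclic sum of $C$), so your argument is a self-contained version of the same proof.
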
 
\begin{proof} 
Let $\{E_i\}$ be a local geodesic frame around a point $p\in M$.
 Using the fact that $\d C$ is symmetric and  
 that cyclic summation of $C$ is vanishing, we can show
$$
  \d {\widehat C} (E_j) = - (1+f)z(E_i, E_k) C(E_j, E_k, E_i) 
  = - (1+f)\langle i_{E_j}C, z\rangle. 
$$
\end{proof}

We are now ready to prove that $T=0$ implies the CPE conjecture.

\begin{thm}\label{thm2018-1-20-11}
 Let $(g, f)$ be a non-trivial solution of the CPE. 
 If $T = 0$, then the Besse conjecture holds.
\end{thm}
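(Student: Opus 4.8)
The strategy is the one announced for Theorem~\ref{thm1}: it suffices to prove that $(M,g)$ has harmonic curvature, i.e. (since $s$ is constant, so that $C=d^Dz$) that the Cotton tensor $C$ vanishes; then $g$ is Einstein by \cite{ych} and \cite{erra}, hence $(M,g)$ is a standard sphere by Obata \cite{Ob}. Two facts fall out of $T=0$ at once. First, Lemma~\ref{lem2019-5-23-10} gives $(1+f)C=\tilde i_{\nabla f}\mathcal W$. Second, writing $(n-2)T=df\wedge z+\tfrac1{n-1}\,i_{\nabla f}z\wedge g$ and taking the interior product with $\nabla f$ in the first factor, one obtains, for all $Y,Z$,
\[
|\nabla f|^2\,z(Y,Z)-df(Y)\,z(\nabla f,Z)+\tfrac1{n-1}\Big(z(\nabla f,\nabla f)\langle Y,Z\rangle-z(\nabla f,Y)\,df(Z)\Big)=0;
\]
antisymmetrizing in $Y\leftrightarrow Z$ annihilates the symmetric terms and leaves $\tfrac{n-2}{n-1}\,df\wedge i_{\nabla f}z=0$, so $df\wedge i_{\nabla f}z=0$. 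Thus $i_{\nabla f}z$ is pointwise a multiple of $df$, and in particular $z(\nabla f,X)=0$ for every $X$ orthogonal to $\nabla f$. Since this is exactly the hypothesis of Theorem~\ref{thm2020-8-22-3}, one clean option is simply to quote that theorem; below I sketch instead an argument that stays within the present section and proves $C=0$ directly.

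For the direct route I would differentiate $(1+f)C=\tilde i_{\nabla f}\mathcal W$. Its divergence is $-i_{\nabla f}C+(1+f)\,\delta C$ on the left, and, by Lemma~\ref{lem2018-2-10-1}, $-\tfrac{n-3}{n-2}\,\widehat C+(1+f)\,\mathring{\mathcal W}(z)$ on the right. Pair this $2$-tensor identity with $z$ and integrate over $M$. Here the remaining consequences of $T=0$ are used: since $T=0$ forces $df\wedge z=-\tfrac1{n-1}\,i_{\nabla f}z\wedge g$ and $C$ is trace-free in every pair of indices, $\langle C,i_{\nabla f}z\wedge g\rangle=0$, whence $\langle C,df\wedge z\rangle=0$; together with the elementary identity $\langle C,df\wedge z\rangle=2\langle i_{\nabla f}C,z\rangle$ this yields the pointwise identities $\langle i_{\nabla f}C,z\rangle=0$ and $\langle\widehat C,z\rangle=0$. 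With these the $i_{\nabla f}C$ and $\widehat C$ terms disappear, and after rewriting $\int_M(1+f)\langle\delta C,z\rangle$ by means of (\ref{eqn2018-1-23-1}) and integrating by parts (the correction term vanishing because $\langle i_{\nabla f}C,z\rangle=0$) one is left with
\[
\tfrac12\int_M(1+f)\,|C|^2\,dv_g=\int_M(1+f)\,\langle\mathring{\mathcal W}(z),z\rangle\,dv_g.
\]
To close, one then invokes a Bochner-type identity for the divergence-free tensor $z$ under the CPE --- obtained by a further differentiation of (\ref{cpe}) together with $\delta z=0$, the second Bianchi identity, and $Ddf=(1+f)z-\tfrac{sf}{n(n-1)}\,g$ from (\ref{eqn2020-5-7-2}) --- which converts the right-hand side above into $\int_M(1+f)|\nabla z|^2$ plus curvature terms that recombine with $\int_M(1+f)|C|^2$; Lemma~\ref{lem2018-2-14-10} is the tool for integrating by parts the $\widehat C$-type contributions that appear in this computation. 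The outcome should be that $\int_M(1+f)|C|^2$ is pinned to a quantity with a definite sign, forcing $C=0$.

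The step I expect to be the genuine obstacle is precisely the sign issue: $1+f$ is not sign-definite, so none of the integral identities above is coercive on its own, and one cannot mimic the trivial case $\min_M f\ge -1$. What makes the argument go through is the rigidity packaged in $T=0$: it ties the weight $1+f$ directly to the Weyl curvature via $(1+f)C=\tilde i_{\nabla f}\mathcal W$, and it collapses the relevant tensor algebra via $df\wedge i_{\nabla f}z=0$, so that the a priori unsigned contributions cancel exactly rather than merely being estimated. If one is content to cite Theorem~\ref{thm2020-8-22-3}, this obstacle disappears, since $df\wedge i_{\nabla f}z=0$ is exactly the hypothesis that theorem requires.
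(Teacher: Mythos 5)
Your reduction of $T=0$ to $df\wedge i_{\nabla f}z=0$, and hence to $z(\nabla f,X)=0$ for $X\perp\nabla f$, is correct (it is essentially Lemma~\ref{lem2018-4-30-20}), but neither of your two proposed ways of concluding actually closes the argument. The ``clean option'' of quoting Theorem~\ref{thm2020-8-22-3} is circular in the logic of this paper: the proof of that theorem (Section 4) begins by invoking Theorem~\ref{thm2018-1-20-11} --- it reduces the hypothesis $z(\nabla f,X)=0$ to $T=0$ via Lemma~\ref{lem2019-6-22-1} and then applies the very statement you are trying to prove. So it is not available here.

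Your ``direct route'' reproduces several of the paper's intermediate steps correctly ($\langle i_{\nabla f}C,z\rangle=0$, the identity $(1+f)C=\tilde i_{\nabla f}\mathcal W$, and the divergence identity from Lemma~\ref{lem2018-2-10-1}), but it stalls exactly where you say it does: the weighted identity $\tfrac12\int_M(1+f)|C|^2=\int_M(1+f)\langle\mathring{\mathcal W}(z),z\rangle$ has no sign, and the ``Bochner-type identity'' you invoke to finish is never supplied. The paper's proof avoids the weight $1+f$ altogether. The ingredient you are missing is that $T=0$ forces $z$ to be diagonal in a frame adapted to $N=\nabla f/|\nabla f|$, namely $z(E_i,E_j)=-\tfrac{\alpha}{n-1}\delta_{ij}$ for $i,j\ge2$ and $z(\nabla f,E_j)=0$ (equations (\ref{eqt08})--(\ref{eqt09})); by trace-freeness of $C$ this gives the pointwise identity $\langle\delta C,z\rangle=\tfrac{n\alpha}{n-1}\,\delta C(N,N)$. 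One then shows $\delta C(N,N)=0$ by evaluating the divergence identity (\ref{eqn2018-2-10-8}) at $(N,N)$, using $i_{\nabla f}C(N,N)=0$ and $\mathring{\mathcal W}(z)(\nabla f,\nabla f)=-(1+f)\langle i_{\nabla f}C,z\rangle=0$. With $\langle\delta C,z\rangle=0$ pointwise, integrating the \emph{unweighted} identity (\ref{eqn2018-1-23-1}), $\tfrac12|C|^2=(C_{ijk}z_{jk})_{;i}+\langle\delta C,z\rangle$, over $M$ yields $\int_M|C|^2=0$ directly, hence $C=0$, harmonic curvature, and the conclusion via \cite{ych}, \cite{erra} and \cite{Ob}. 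You should replace your final paragraph with this coercive, unweighted integration.
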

\begin{proof} 
It suffices to prove that $C=0$. First, on each level hypersurface $f^{-1}(t)$ for a regular value $t$ of $f$,
it follows from the definition of $T$ that
\bea 
i_{\nabla f}T &=& \frac {|\nabla f|^2}{n-2}\left( z+\frac {\a}{n-1}g\right).
\eea
Since $T=0$, we have
\be 
z(E_i, E_j)= -\frac {\a}{n-1} \delta_{ij} \quad (2\leq i,j\leq n)\label{eqt08}
\ee
for a local orthonormal frame $\{E_1=N,\cdots, E_{n-1}, E_n\}$ with $N = \frac{\n f}{|\n f|}$. Also, from
$$ 
0=(n-2)i_{\nabla f}T(E_j, \nabla f)= \frac {n-2}{n-1}|\nabla f|^2 z(E_j, \nabla f)
$$
for $2\leq j\leq n$, we have
\be 
z(\nabla f, E_j)=0.\label{eqt09}
\ee
Therefore, 
\be 
\qquad \langle i_{\nabla f}C, z\rangle = -\frac {\a}{n-1} \sum_{i=2}^{n}
C(\nabla f, E_i, E_i)=\frac {\a}{n-1} C(\nabla f, N, N)=0. \label{eqy2}
\ee
On the other hand, from Lemma~\ref{lem2019-5-23-10},  we have
\be 
(1+f)C = {\tilde i}_{\n f} {\mathcal W} \label{eqn2018-2-10-7} 
\ee 
and hence, 
\bea
C(X, Y, \n f) = 0 \label{eqn2018-2-10-5} 
\eea
for any vector fields $X$ and $Y$. 
Since the cyclic summation of $C$ vanishes, we have 
$$ 
C(Y, \n f, X) + C(\n f, X, Y) = 0.
$$ 
Taking the divergence $\d$ of (\ref{eqn2018-2-10-7}) and applying 
Lemma~\ref{lem2018-2-10-1}, we obtain
\bea 
-i_{\n f} C + (1+f)\d C = \frac{n-3}{n-2} i_{\n f} C + (1+f) {\mathring {\mathcal W}}(z). 
\eea 
Thus, 
\be 
(1+f)\d C - (1+f) {\mathring {\mathcal W}}z = \frac{2n-5}{n-2}i_{\n f}C.\label{eqn2018-2-10-8} 
\ee
Note that,  by (\ref{eqn2018-2-10-7}) and definition of $\mathring{\mathcal W}(z)$, the following
$$ 
\mathring{\mathcal W}(z) (\nabla f, X)= -(1+f) \langle i_XC, z\rangle
$$
 holds for any vector field $X$. In particular, by (\ref{eqy2}), we have
$$
\mathring{\mathcal W}(z) (\nabla f, \n f) =0.
$$
Consequently, by  (\ref{eqn2018-2-10-8}), we obtain
\be
(1+f)\d C(N, N) = 0.\label{eqn2018-2-10-10} 
\ee 
Finally, it follows from (\ref{eqt08}) and (\ref{eqt09}) together with  
$\displaystyle{\sum_{j=2}^{n}C_{ijj;i} = - C_{i11;i}}$ that
\bea 
\langle \d C, z\rangle &=& - C_{ijk;i}z_{jk} =
 \frac{\a}{n-1} \sum_{i=1}^n \sum_{j=2}^{n}C_{ijj;i} - \a \sum_{i=1}^n  C_{i11;i}\\
&=&
-\frac{n\a}{n-1} \sum_{i=1}^n  C_{i11:i} = \frac{n\a}{n-1} \d C(N, N) =0. 
\eea 
Hence, integrating (\ref{eqn2018-1-23-1}) over $M$, we have $C = d^D z = 0$, 
which implies that $(M, g)$ has a harmonic curvature. Therefore, it follows from 
\cite{ych} and \cite{erra} that $(M, g)$ is  Einstein.
\end{proof}

\begin{rem}
{\rm
Introducing the Bach tensor on a Riemannian manifold $(M^n, g)$,
 we can directly show that $C=0$ in Theorem~\ref{thm2018-1-20-11} without using
  the divergence theorem in a higher dimensional case $n \ge 5$. 
  The {\it Bach tensor} $B$ is 
  defined as 
\bea
B = \frac{1}{n-3}\d^D \d \mathcal W + \frac{1}{n-2}{\mathring {\mathcal W}}(r).\label{eqn2018-2-11-2} 
\eea
When the scalar curvature $s$ is constant, 
we have ${\mathring W}(r) = {\mathring W}(z)$. Therefore, from (\ref{eqn2016-12-3-16}), we have 
\bea
B = \frac{1}{n-2}\left(-\d C + {\mathring {\mathcal W}}(z)\right).\label{bach302} 
\eea
Furthermore, the following property holds in general (cf. \cite{CC}) for $n \ge 4$:
for any vector field $X$,  
$$ 
(n-2)\d B (X) = - \frac{n-4}{n-2} \langle i_X C, z\rangle. 
$$ 
The complete divergence of the Bach tensor has the following form:
\bea
\d\d B = \frac{n-4}{(n-2)^2} \left(\frac{1}{2} |C|^2 - \langle \d C, z\rangle \right).\label{eqn2018-2-22-1} 
\eea
Using these identities on the Bach tensor and taking the divergence of $T$, the following identities can be obtained:
\begin{itemize}
\item[(i)] $ (n-2)(1+f)B = -i_{\n f}C + \frac{n-3}{n-2}{\widehat C} + (n-1) \d T.$
\item[(ii)] For any vector field $X$,  
\bea
\d\d T (X) = \frac{1}{n-2}(1+f)\langle i_X C, z\rangle + \langle i_X T,
z\rangle.\label{eqn2018-2-9-1} 
\eea 
\end{itemize}
Therefore, we obtain the following:
Let $(g, f)$ be a non-trivial solution of the CPE and $T = 0$. Then
$\langle i_X C, z \rangle = 0$ for any vector field $X$ by (ii), and hence,
$$
\frac{1}{2}|C|^2 = \langle \d C, z\rangle =0.
$$
}
\end{rem}

Before closing this section, we would like to mention some of the identities on the square norm 
and the divergence of $T$, which will be used later in the proof of our main theorem.

\begin{lem}{\rm (\cite{GH1})}\label{lem2019-5-28-5}
Let $(g, f)$ be a  solution of the CPE, Then,
$$ 
|T|^2= \frac 2{(n-2)^2}|\nabla f|^2 \left( |z|^2-\frac n{n-1}|i_Nz|^2\right).
$$
\end{lem}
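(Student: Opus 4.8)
The plan is to compute $|T|^2$ pointwise by expanding the definition (\ref{defnt}) in a local orthonormal frame $\{E_i\}$, with $|T|^2 = \sum_{i,j,k} T(E_i,E_j,E_k)^2$, the full tensor norm used for $|C|^2$ in Section 2. Writing $(n-2)T = df\wedge z + \frac{1}{n-1}\,i_{\nabla f}z\wedge g$ and squaring gives
$$(n-2)^2|T|^2 = |df\wedge z|^2 + \frac{2}{n-1}\,\langle df\wedge z,\, i_{\nabla f}z\wedge g\rangle + \frac{1}{(n-1)^2}\,|i_{\nabla f}z\wedge g|^2,$$
so it suffices to evaluate these three terms.

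The basic tool is the elementary identity, valid for any $1$-form $\eta$ and any symmetric $2$-tensor $b$,
$$|\eta\wedge b|^2 = 2|\eta|^2|b|^2 - 2|i_\eta b|^2,$$
obtained by writing $(\eta\wedge b)(E_i,E_j,E_k) = \eta_i b_{jk} - \eta_j b_{ik}$, squaring, and summing over $i,j,k$: the two ``diagonal'' sums each contribute $|\eta|^2|b|^2$ and the mixed sum collapses to $-2\sum_k\big(\sum_j \eta_j b_{jk}\big)^2 = -2|i_\eta b|^2$. Applying this with $(\eta,b) = (df, z)$ gives $|df\wedge z|^2 = 2|\nabla f|^2|z|^2 - 2|i_{\nabla f}z|^2$, since $|df|^2 = |\nabla f|^2$; applying it with $(\eta,b) = (i_{\nabla f}z,\, g)$, and using $|g|^2 = n$ and $i_\eta g = \eta$, gives $|i_{\nabla f}z\wedge g|^2 = 2(n-1)|i_{\nabla f}z|^2$.

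For the cross term I would expand
$$\langle df\wedge z,\, i_{\nabla f}z\wedge g\rangle = \sum_{i,j,k}\big(df_i\, z_{jk} - df_j\, z_{ik}\big)\big((i_{\nabla f}z)_i\,\delta_{jk} - (i_{\nabla f}z)_j\,\delta_{ik}\big).$$
Two of the four resulting sums vanish because ${\rm tr}(z) = 0$, and the remaining two each equal $-\sum_{j,k} df_k\,(i_{\nabla f}z)_j\, z_{jk} = -\langle\nabla f,\, z^2(\nabla f)\rangle = -|z(\nabla f)|^2 = -|i_{\nabla f}z|^2$, where $z$ is viewed as a self-adjoint endomorphism; hence $\langle df\wedge z,\, i_{\nabla f}z\wedge g\rangle = -2|i_{\nabla f}z|^2$. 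Substituting the three evaluations,
$$(n-2)^2|T|^2 = 2|\nabla f|^2|z|^2 - 2|i_{\nabla f}z|^2 - \frac{4}{n-1}|i_{\nabla f}z|^2 + \frac{2}{n-1}|i_{\nabla f}z|^2 = 2|\nabla f|^2|z|^2 - \frac{2n}{n-1}|i_{\nabla f}z|^2,$$
and using $|i_{\nabla f}z|^2 = |\nabla f|^2|i_Nz|^2$ — valid away from the critical set of $f$ with $N = \nabla f/|\nabla f|$, while both sides of the asserted identity vanish where $\nabla f = 0$ — yields the claimed formula.

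I do not expect any genuine obstacle: the computation is purely algebraic and uses only that $z$ is symmetric and trace-free, so the critical point equation itself never enters; the one thing to be careful about is the index bookkeeping and the sign in the cross term. If one prefers to avoid a frame, the three evaluations are instances of the polarization identity $\langle\eta_1\wedge b_1,\, \eta_2\wedge b_2\rangle = 2\langle\eta_1,\eta_2\rangle\langle b_1,b_2\rangle - 2\langle i_{\eta_1}b_2,\, i_{\eta_2}b_1\rangle$, applied with $(\eta_1,b_1,\eta_2,b_2)$ equal to $(df, z, df, z)$, $(df, z, i_{\nabla f}z, g)$, and $(i_{\nabla f}z, g, i_{\nabla f}z, g)$, together with ${\rm tr}(z) = 0$, $i_\eta g = \eta$, and $|g|^2 = n$.
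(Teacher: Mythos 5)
Your computation is correct: with the convention $(\eta\wedge b)_{ijk}=\eta_i b_{jk}-\eta_j b_{ik}$ used throughout the paper (e.g.\ for $ds\wedge g$ in the Cotton tensor) and the full-index norm $\sum_{i,j,k}T_{ijk}^2$, the three evaluations $|df\wedge z|^2=2|\nabla f|^2|z|^2-2|i_{\nabla f}z|^2$, $\langle df\wedge z, i_{\nabla f}z\wedge g\rangle=-2|i_{\nabla f}z|^2$ (the two surviving sums each give $-|i_{\nabla f}z|^2$, the other two die by ${\rm tr}\,z=0$), and $|i_{\nabla f}z\wedge g|^2=2(n-1)|i_{\nabla f}z|^2$ combine to $2|\nabla f|^2|z|^2-\tfrac{2n}{n-1}|i_{\nabla f}z|^2$, which is the claimed identity after writing $|i_{\nabla f}z|^2=|\nabla f|^2|i_Nz|^2$. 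The paper itself offers no proof here — the lemma is quoted from \cite{GH1} — so your self-contained frame computation is exactly the argument being outsourced, and your observations that only the symmetry and trace-freeness of $z$ are used (the CPE never enters) and that both sides vanish on ${\rm Crit}(f)$ are correct.
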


\begin{lem} \label{lem2020-5-7-10}
Let $(g, f)$ be a  solution of the CPE, Then,
$$
(n-1)\d T = \frac{sf}{n-1}z - D_{\n f}z + \frac{1}{n-2}\widehat C + \frac{n}{n-2} (1+f)z\circ z - \frac{1}{n-2}(1+f) |z|^2 g. 
$$
Here $ z\circ z$ is defined by
$$
 z\circ z(X, Y) = \sum_{i=1}^n z(X, E_i) z(Y, E_i)
 $$
 for a local frame $\{E_i\}_{i=1}^n$ of $M$.
\end{lem}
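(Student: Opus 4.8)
The plan is to compute $(n-1)\d T$ directly from the definition (\ref{defnt}) by taking the divergence of each of the two terms $\frac{1}{n-2}df\wedge z$ and $\frac{1}{(n-1)(n-2)}i_{\nabla f}z\wedge g$ and then assembling the pieces. Throughout I would work in a local geodesic frame $\{E_i\}$ normal at a point $p$, so that covariant derivatives of frame fields vanish at $p$ and $\d$ reduces to $-D_{E_i}(\cdot)(E_i,\dots)$ in the first slot. The wedge of a $1$-form and a symmetric $2$-tensor $\omega\wedge b$ is the $3$-tensor $(\omega\wedge b)(X,Y,Z)=\omega(X)b(Y,Z)-\omega(Y)b(X,Z)$, and its divergence in the first slot produces four kinds of terms: one where $D_{E_i}$ hits $\omega(E_i)=\delta\omega$-type scalars, one where it hits $b$ and gets contracted (a $\delta b$ or $D_{\nabla f}z$ term), and cross terms $D_{E_i}\omega(X)\,b(E_i,Y)$ and $D_{E_i}\omega(E_i)$ paired with $b$.

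First I would handle $\d(df\wedge z)$. Here $\omega=df$, so $D_{E_i}df(E_j)=Ddf(E_i,E_j)$, $\delta(df)=-\Delta f$, and $\delta z = \frac12 ds = 0$ since $s$ is constant (by the contracted second Bianchi identity $\delta z=-\frac{n-2}{2n}ds$). Using the CPE in the form (\ref{eqn2020-5-7-2}), namely $Ddf=(1+f)z-\frac{sf}{n(n-1)}g$, the Hessian terms convert into $z\circ z$ terms and pure $z$ terms; the trace relation (\ref{cpet}) gives $\Delta f=-\frac{s}{n-1}f$, which supplies the $\frac{sf}{n-1}z$ contribution. The term $D_{E_i}z(E_i,\cdot)=-\delta z=0$ drops out, while the term of the form $E_i\mapsto D_{E_i}(df)(\cdot)$ contracted against $z$ gives exactly $D_{\nabla f}z$ up to sign. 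For $\d(i_{\nabla f}z\wedge g)$, note $g$ is parallel so $D_{E_i}$ only acts on $i_{\nabla f}z$; one needs $\delta(i_{\nabla f}z)=-D_{E_i}z(E_i,\nabla f)-Ddf(E_i,E_i)z$-type contraction, i.e. $\delta(i_{\nabla f}z)=-\langle\delta z,\nabla f\rangle-\langle Ddf,z\rangle=-(1+f)|z|^2+\frac{sf}{n(n-1)}\mathrm{tr}(z)=-(1+f)|z|^2$, again using the CPE and $\mathrm{tr}(z)=0$. This is the source of the $-\frac{1}{n-2}(1+f)|z|^2g$ term. The remaining piece of $\d(i_{\nabla f}z\wedge g)$ contributes a $D_{E_j}(i_{\nabla f}z)(E_k)$-type $2$-tensor, which expands via $D_{E_j}z(E_k,\nabla f)+z(E_k,DdE_j f)=D_{E_j}z(E_k,\nabla f)+(1+f)z\circ z(E_j,E_k)-\frac{sf}{n(n-1)}z(E_j,E_k)$; the antisymmetrized combination $D_{E_j}z(E_k,\nabla f)-D_{E_k}z(E_j,\nabla f)$ is precisely (a slot-permutation of) the Cotton tensor since $C=d^Dz$, which is where $\frac{1}{n-2}\widehat C$ enters.

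The mechanical obstacle — and the step I expect to take the most care — is bookkeeping the index placements and signs so that the antisymmetrized covariant derivatives of $z$ get correctly identified with $C$ (and then with $\widehat C$ via $\widehat C(X,Y)=C(Y,\nabla f,X)$), and so that all the stray $\frac{sf}{n(n-1)}$ coefficients from the two terms combine into the single clean $\frac{sf}{n-1}z$ coefficient after multiplying through by $n-1$. In particular one must be careful that $\d T$ is a $2$-tensor obtained by contracting the first slot of the $3$-tensor $T$, and that $T$ is antisymmetric in its first two slots, so that $\d T(X,Y)=-D_{E_i}T(E_i,X,Y)$ has a definite symmetry type that must match the right-hand side (a sum of a symmetric part and the generally non-symmetric $\widehat C$ and $D_{\nabla f}z$ pieces — here $\widehat C$ supplies the antisymmetric correction). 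After collecting terms and using $\delta z=0$, $\mathrm{tr}(z)=0$, (\ref{cpet}) and (\ref{eqn2020-5-7-2}) repeatedly, the five listed terms should fall out; one final consistency check is to take the trace of both sides (the left side traces to $(n-1)\,\mathrm{tr}(\d T)=0$ since $\mathrm{tr}_{13}T=0$) and verify the right side also traces to zero using $\mathrm{tr}(\widehat C)=-\langle i_{\nabla f}C,\cdot\rangle$-contraction $=0$ and $\mathrm{tr}(z\circ z)=|z|^2$.
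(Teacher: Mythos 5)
Your proposal follows essentially the same route as the paper's proof: the authors likewise compute $\d(df\wedge z)=\frac{sf}{n}z-D_{\n f}z+(1+f)z\circ z$ and $\d(i_{\n f}z\wedge g)=D_{\n f}z+\widehat C-(1+f)|z|^2g+(1+f)z\circ z-\frac{sf}{n(n-1)}z$ using (\ref{cpet}), (\ref{eqn2020-5-7-2}) and $\d z=0$, then combine with the coefficients from (\ref{defnt}). Your ingredients and bookkeeping (including the identification of the antisymmetrized derivative of $z$ with $C=d^Dz$ and hence $\widehat C$) are correct, so the proof goes through.
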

\begin{proof} 
By using (\ref{cpet}) and (\ref{eqn2020-5-7-2}), we can compute
\bea
{\d} (df\wedge z) =  \frac{sf}{n}z - D_{\n f}z + (1+f) z\circ z.\label{eqn2018-9-10-1}
\eea
From the fact that $\delta z = 0$ and using (\ref{eqn2020-5-7-2}) again, we can obtain
\bea
{\d} (i_{\n f}z \wedge g) =  D_{\n f}z + {\widehat C} - (1+f)|z|^2 g + (1+f) z\circ z 
- \frac{sf}{n(n-1)} z. \label{eqn2018-9-10-2}
\eea
By combining these identities, the proof follows from the definition of the tensor $T$.
\end{proof}

\section{CPE with $z(\n f, X)=0$}

For the potential function $f$ of the CPE, we let
$$
\a := z(N, N)
$$
for convenience. Here recall that $N = \frac{\n f}{|\n f|}$.
Note that the function $\a$ is well-defined only on the set $M \setminus {\rm Crit}(f)$, 
where ${\rm Crit}(f)$ is the set of
all critical points of $f$. However, $|\a| \le |z|$, $\a$ can be extended to a $C^0$ function on the whole $M$. 
Refer \cite{ych} for more details.

\noindent
First, it is easy to compute that
\be
N(|\n f|) = (1+f)\a - \frac{sf}{n(n-1)}\label{eqn2020-5-7-3}
\ee
and thus,
\be
N\left(\frac{1}{|\n f|}\right)  = - \frac{1}{|\n f|^2} \left[(1+f)\a - \frac{sf}{n(n-1)}\right].\label{eqn2020-5-7-4}
\ee
In particular, we have the following lemma.

\begin{lem}\label{lem2020-8-12-1}
Assume $z(\nabla f, X)=0$ for any vector field  $X$ orthogonal to $\nabla f$. 
Then $i_{\n f} z = \a \n f$ as a vector field, and $D_NN = 0.$ 
\end{lem}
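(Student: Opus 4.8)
The plan is to establish the two assertions in turn; both are direct consequences of the hypothesis together with the CPE, and the only care needed is that $N$ (hence $\alpha$) is defined only on $M\setminus{\rm Crit}(f)$, so all identities are to be read there.

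\textbf{First assertion.} Fix $p\in M\setminus{\rm Crit}(f)$ and an arbitrary tangent vector $Y$ at $p$, and split $Y=\langle Y,N\rangle N + Y^{\top}$ with $Y^{\top}$ orthogonal to $N$ (equivalently to $\nabla f$). Since $z$ is symmetric and $z(\nabla f,Y^{\top})=0$ by hypothesis, the defining relation $\langle i_{\nabla f}z,Y\rangle = z(\nabla f,Y)$ reduces to $\langle Y,N\rangle\,z(\nabla f,N) = |\nabla f|\,z(N,N)\,\langle Y,N\rangle = \alpha\,\langle \nabla f,Y\rangle$. As $Y$ is arbitrary, this gives $i_{\nabla f}z=\alpha\,\nabla f$ as a vector field.

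\textbf{Second assertion.} Rewrite the CPE as in (\ref{eqn2020-5-7-2}), namely $Ddf=(1+f)z-\frac{sf}{n(n-1)}g$, and recall that $\langle D_X\nabla f,Y\rangle=Ddf(X,Y)$. Taking $X=\nabla f$ and using the first assertion to replace $z(\nabla f,Y)$ by $\alpha\langle\nabla f,Y\rangle$ shows that $D_{\nabla f}\nabla f$ is a multiple of $\nabla f$, with proportionality factor $(1+f)\alpha-\frac{sf}{n(n-1)}$, which equals $N(|\nabla f|)$ by (\ref{eqn2020-5-7-3}). On the other hand, writing $\nabla f=|\nabla f|\,N$ and expanding by the Leibniz rule for $D$ gives $D_{\nabla f}\nabla f=|\nabla f|\,N(|\nabla f|)\,N+|\nabla f|^{2}\,D_NN$. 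Comparing the two expressions for $D_{\nabla f}\nabla f$ and cancelling the common term $|\nabla f|\,N(|\nabla f|)\,N$ yields $|\nabla f|^{2}D_NN=0$, hence $D_NN=0$ on $M\setminus{\rm Crit}(f)$.

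I do not anticipate a genuine obstacle: the computation is short once the CPE is put in the form (\ref{eqn2020-5-7-2}). The single conceptual point is that the hypothesis $z(\nabla f,X)=0$ for $X\perp\nabla f$ is exactly what forces the $z$-term in $Ddf(\nabla f,\cdot)$ to collapse to a multiple of $\nabla f$; without it the integral curves of $N/|N|$ need not be geodesics. The bookkeeping about the critical set is harmless, since $N$ and $D_NN$ are only claimed on $M\setminus{\rm Crit}(f)$, and $i_{\nabla f}z=\alpha\nabla f$ holds trivially (both sides vanishing) at critical points.
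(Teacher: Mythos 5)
Your proof is correct and follows exactly the route the paper intends: the first assertion by decomposing an arbitrary vector into its $N$-component and its orthogonal part, and the second by comparing the two expressions for $D_{\nabla f}\nabla f$ obtained from (\ref{eqn2020-5-7-2}) and (\ref{eqn2020-5-7-3}). The paper merely asserts both steps as "obvious/easy," and your write-up supplies precisely the missing computations.
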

\begin{proof}
It is obvious that $i_{\n f} z = \a \n f$. From this, by combining (\ref{eqn2020-5-7-2}) and (\ref{eqn2020-5-7-3}), 
it is easy to see that $D_NN = 0$. 
\end{proof}

From now, throughout this section, we assume that $(M^n, g, f)$ is a nontrivial solution of the CPE satisfying
\be
z(\n f, X) = 0 \label{eqn2020-5-7-1}
\ee
for any vector field $X$ orthogonal to $\n f$. In this case, by Lemma~\ref{lem2020-8-12-1}, we can write
\be
i_{\n f} z = \a df\label{eqn2020-8-22-1}
\ee
as a $1$-form. As the interior product $\tilde i_{\n f }{\mathcal W}$ of $4$-tensor, 
we define the interior product $\tilde i_{\n f}T$ and $\tilde i _{\n f}C$  as
$$
\tilde i_{\n f}T(X, Y) = T(X, Y, \n f)\quad \mbox{and}\quad \tilde i_{\n f}C(X, Y) = C(X, Y, \n f)
$$
for vector fields $X$ and $Y$.

\begin{lem}\label{Cotz}
Assume $z(\nabla f, X)=0$ for $X \perp \n f$. Then
$$
\tilde{i}_{\nabla f}T=0 \quad \mbox{and}\quad \tilde{i}_{\nabla f}C=0$$
Additionally, we have
\be
(1+f)|z|^2= \nabla f(\a)  -\frac {sf}{n-1}\a. \label{eqn2}
\ee
\end{lem}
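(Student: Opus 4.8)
\textbf{Proof proposal for Lemma~\ref{Cotz}.}

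The plan is to exploit the structural identity of Lemma~\ref{lem2019-5-23-10}, namely $(1+f)C = \tilde i_{\n f}\mathcal W - (n-1)T$, together with the hypothesis $z(\n f, X)=0$ for $X\perp\n f$, which by (\ref{eqn2020-8-22-1}) says $i_{\n f}z = \a\, df$ as a $1$-form. First I would compute $\tilde i_{\n f}T$ directly from the definition (\ref{defnt}) of $T$: inserting $\n f$ into the last slot of $df\wedge z$ and of $i_{\n f}z\wedge g$ produces terms of the shape $df\wedge(i_{\n f}z)$ and $(i_{\n f}z)\wedge(i_{\n f}g) = (i_{\n f}z)\wedge df$. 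Since $i_{\n f}z = \a\, df$, both wedge products involve $df\wedge df = 0$, so $\tilde i_{\n f}T = 0$. Next, contracting the last factor of the identity in Lemma~\ref{lem2019-5-23-10} with $\n f$ gives $(1+f)\,\tilde i_{\n f}C = \tilde i_{\n f}\tilde i_{\n f}\mathcal W - (n-1)\tilde i_{\n f}T$; but $\tilde i_{\n f}\tilde i_{\n f}\mathcal W(X,Y) = \mathcal W(X,Y,\n f,\n f) = 0$ by the antisymmetry of $\mathcal W$ in its last two arguments, and the $T$-term just vanished, so $(1+f)\tilde i_{\n f}C = 0$. Since $1+f$ is not identically zero (indeed $1+f>0$ off a set where one argues by continuity, as already used implicitly in the paper via ${\rm div}(i_{\n f}z) = (1+f)|z|^2$), we conclude $\tilde i_{\n f}C = 0$ on all of $M$.

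For the scalar identity (\ref{eqn2}), the natural route is to take the divergence of the $1$-form $i_{\n f}z = \a\,df$ in two ways. On one hand, ${\rm div}(i_{\n f}z) = (1+f)|z|^2$; this is exactly the identity recalled in the introduction (and follows from $\delta z = 0$ together with the CPE (\ref{eqn2020-5-7-2}), since ${\rm div}(i_{\n f}z) = \langle Ddf, z\rangle = (1+f)|z|^2 - \tfrac{sf}{n(n-1)}\langle g,z\rangle = (1+f)|z|^2$ because $z$ is traceless). On the other hand, ${\rm div}(\a\, df) = \langle \n\a, \n f\rangle + \a\,\tr f = \n f(\a) - \tfrac{s}{n-1}f\,\a$, using the trace equation (\ref{cpet}) in the form $\tr f = -\tfrac{s}{n-1}f$. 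Equating the two expressions yields $(1+f)|z|^2 = \n f(\a) - \tfrac{sf}{n-1}\a$, which is (\ref{eqn2}).

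The computations here are all routine once the hypothesis is used in the form $i_{\n f}z = \a\,df$; the only point requiring a modicum of care is the passage from $(1+f)\tilde i_{\n f}C = 0$ to $\tilde i_{\n f}C = 0$ and, similarly, the fact that (\ref{eqn2}) holds on all of $M$ including at critical points of $f$ — both handled by noting that the set where $1+f = 0$ (equivalently the relevant quantities could degenerate) has empty interior, so the identities extend by continuity, exactly as the paper does for $\a$ itself. I expect no genuine obstacle: the main content is the algebraic observation that feeding $\n f$ into $T$ and $C$ collapses everything because $i_{\n f}z$ is proportional to $df$.
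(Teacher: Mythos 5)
Your proposal is correct and follows essentially the same route as the paper: $\tilde i_{\nabla f}T=0$ comes from $i_{\nabla f}z=\a\,df$ forcing $df\wedge i_{\nabla f}z=0$, the vanishing of $\tilde i_{\nabla f}C$ comes from contracting Lemma~\ref{lem2019-5-23-10} with $\nabla f$ and the antisymmetry of $\mathcal W$, and (\ref{eqn2}) comes from computing ${\rm div}(i_{\nabla f}z)={\rm div}(\a\,df)$ in two ways using $\d z=0$ and (\ref{cpet}). You merely spell out the division by $1+f$ and the continuity argument, which the paper leaves implicit.
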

\begin{proof}
 By definition of $T$, we have
 $$
 T(X, Y, \nabla f) =\frac {1}{n-1}\, df\wedge i_{\nabla f} z(X,Y)
$$
for any vectors $X$ and $Y$. Since $i_{\n f} z = \a df$, we obviously have
$df \wedge i_{\n f} z = 0$, which shows  $\tilde{i}_{\nabla f}T=0$. 
The second equality follows from Lemma~\ref{lem2019-5-23-10}.
For (\ref{eqn2}), we note from the CPE that
$$
\d (i_{\n f}z) = -(1+f)|z|^2.
$$
Hence, we can obtain (\ref{eqn2}) by taking the divergence of (\ref{eqn2020-8-22-1}).
\end{proof}

For a nontrivial solution $(g, f)$ of the CPE satisfying $ z(\n f, X) = 0$ for $X \perp \n f$, 
there are two important properties that together play a key role in proving our main theorems.
One is that both the functions, $\a = z(N, N)$ and $|z|^2$, are constants on each level set $f^{-1}(c)$, and the other is that there are no critical points of $f$ except its maximum and minimum
 points in $M$.

\begin{lem}\label{lem2020-8-23-2}
The functions $|\n f|, \a$ and $|z|^2$ are all constants on each level set $f^{-1}(c)$ of $f$.
\end{lem}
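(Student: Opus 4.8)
The plan is to establish the three assertions one after another, each feeding into the next. First I would check that $|\nabla f|$ is constant on the regular level sets of $f$: for a vector field $X$ tangent to $f^{-1}(c)$, that is $X\perp\nabla f$, one has $X(|\nabla f|^2)=2Ddf(X,\nabla f)$, and by the CPE in the form (\ref{eqn2020-5-7-2}) this equals $2(1+f)z(X,\nabla f)-\frac{2sf}{n(n-1)}\langle X,\nabla f\rangle=0$, since $\langle X,\nabla f\rangle=0$ and $z(X,\nabla f)=0$ by hypothesis. As a by-product, combining $D_NN=0$ from Lemma~\ref{lem2020-8-12-1} with (\ref{eqn2020-5-7-3}) shows that $D_{\nabla f}\nabla f=\big[(1+f)\a-\frac{sf}{n(n-1)}\big]\nabla f$ is a pointwise multiple of $\nabla f$; I will use this repeatedly below.

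The main step is to show that $\a=z(N,N)$ is constant on the regular level sets. Since $|\nabla f|$ is already constant there, it suffices to prove $X\big(z(\nabla f,\nabla f)\big)=0$ for $X\perp\nabla f$. Expanding, $X\big(z(\nabla f,\nabla f)\big)=(D_Xz)(\nabla f,\nabla f)+2\,z(D_X\nabla f,\nabla f)$, and the last term vanishes because $z(D_X\nabla f,\nabla f)=\a\langle D_X\nabla f,\nabla f\rangle=\tfrac{\a}{2}X(|\nabla f|^2)=0$ by (\ref{eqn2020-8-22-1}) and the first step. For the first term, since $s$ is constant we have $C=d^Dz$, hence $(D_Xz)(\nabla f,\nabla f)=C(X,\nabla f,\nabla f)+(D_{\nabla f}z)(X,\nabla f)$; here $C(X,\nabla f,\nabla f)=\tilde i_{\nabla f}C(X,\nabla f)=0$ by Lemma~\ref{Cotz}. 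For the remaining term I would extend $X$ near the point so that it stays tangent to the level sets, so that $z(X,\nabla f)=\a\langle X,\nabla f\rangle\equiv0$ there; then $(D_{\nabla f}z)(X,\nabla f)=-z(D_{\nabla f}X,\nabla f)-z(X,D_{\nabla f}\nabla f)$, and both summands vanish: $z(X,D_{\nabla f}\nabla f)$ is a multiple of $z(X,\nabla f)=0$, while $z(D_{\nabla f}X,\nabla f)=\a\langle D_{\nabla f}X,\nabla f\rangle=-\a\langle X,D_{\nabla f}\nabla f\rangle$ is a multiple of $\langle X,\nabla f\rangle=0$, using once more that $D_{\nabla f}\nabla f\parallel\nabla f$. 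Thus $(D_Xz)(\nabla f,\nabla f)=0$, and $\a$ is constant on $f^{-1}(c)$.

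Finally, to obtain that $|z|^2$ is constant on the regular level sets I would invoke (\ref{eqn2}), which reads $(1+f)|z|^2=\nabla f(\a)-\frac{sf}{n-1}\a$. Near a regular point $f$ is a submersion, so by the first two steps $\a$ and $|\nabla f|^2$ are locally functions of $f$ alone, say $\a=\a(f)$ and $|\nabla f|^2=\psi(f)$; then $\nabla f(\a)=\a'(f)|\nabla f|^2=\a'(f)\psi(f)$ is a function of $f$, and therefore so is $(1+f)|z|^2$. On the open set where $1+f\neq0$ this exhibits $|z|^2$ as a function of $f$, hence constant on the level sets, and the conclusion propagates to all regular level sets by continuity.

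The main obstacle is the middle step. It goes through precisely because both special consequences of the hypothesis $z(\nabla f,X)=0$ are brought to bear simultaneously: the identity $i_{\nabla f}z=\a\,df$ (which forces $D_{\nabla f}\nabla f\parallel\nabla f$, equivalently $D_NN=0$) and the vanishing $\tilde i_{\nabla f}C=0$ from Lemma~\ref{Cotz}; together they annihilate the radial-derivative term $(D_{\nabla f}z)(X,\nabla f)$. Carrying out the bookkeeping of covariant derivatives, and in particular using the tangential extension of $X$, is the delicate part; the other two steps are then essentially formal.
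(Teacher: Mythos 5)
Your proof is correct and follows essentially the same route as the paper: the constancy of $|\nabla f|$ from the CPE, the constancy of $\a$ from $\tilde i_{\nabla f}C=0$ combined with the vanishing of the radial derivative $D_{\nabla f}z(X,\nabla f)$ (the paper phrases this with $C(X,N,\nabla f)$ and $D_Nz(X,\nabla f)=0$, which is the same computation), and then (\ref{eqn2}) for $|z|^2$. Your packaging of the last step as ``$\a$ and $|\nabla f|^2$ are locally functions of $f$, hence so is $\nabla f(\a)=\a'(f)|\nabla f|^2$'' is a clean substitute for the paper's commutator argument $X(N(\a))=N(X(\a))-[X,N](\a)=0$.
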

\begin{proof}
We may assume that $c$ is a regular value of $f$ by the Sard's theorem. 
Let $X$ be a tangent vector on $f^{-1}(c)$. Then, from (\ref{eqn2020-5-7-2}), we have
$$
\frac{1}{2} X(|\n f|^2) = Ddf(\n f, X) = (1+f)z(\n f, X) - \frac{sf}{n(n-1)} g(\n f, X) = 0.
$$ 
Now, let $X$ be a (local) vector field orthogonal to $\nabla f$ such that $z(X, \n f ) = 0$.
Since $D_NN=0$ by Lemma~\ref{lem2020-8-12-1}, we have
$g(D_NX, N) = - g(X, D_NN) = 0$ and $D_N\n f = - |\n f|N\left(\frac{1}{|\n f|}\right) \n f$. So,
\be
D_N z(X, \n f) = - z(D_N X, \n f)-z(X, D_N \n f) = 0.\label{eqn2020-8-26-1}
\ee
Since $\tilde{i}_{\nabla f}C=0$ by Lemma~\ref{Cotz}, we have
$$
0=C(X, N, \nabla f)=D_X z(N, \nabla f)-D_{N}z(X, \nabla f) = D_X z(N, \nabla f)=|\nabla f|X(\a),
$$
implying that $\a $ is a constant on $f^{-1}(c)$.

The property, $D_NN=0$, also implies $ [X, N]$ is orthogonal to $\n f$ and therefore,
$$
X(N(\alpha)) = N(X(\alpha)) - [X, N](\alpha) = 0.
 $$
Since $\n \alpha = N(\alpha)N$, it shows $Dd\a(X, \n f) = 0$, and from (\ref{eqn2}), we have $X(|z|^2) = 0$.
Consequently, we conclude that $|z|^2$ is a constant along each level set $f^{-1}(c)$ of $f$.
\end{proof}

\begin{lem} \label{lem2020-8-26-2}
$\a$ is nonpositive on $M$.
\end{lem}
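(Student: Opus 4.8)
The plan is to show $\alpha = z(N,N) \le 0$ everywhere on $M$ by a maximum-principle argument applied along the gradient flow of $f$, exploiting the two structural facts just established: that $\alpha$ and $|z|^2$ are constant on level sets (Lemma~\ref{lem2020-8-23-2}) and that $i_{\n f} z = \alpha\, df$ (equation~(\ref{eqn2020-8-22-1})). Because $\alpha$ is constant on each level set and (away from $\mathrm{Crit}(f)$) the level sets are the orbits of $N$, the function $\alpha$ is really a function of $f$ alone, so it suffices to study its behaviour along a single integral curve of $N$ and to control its values at the critical points of $f$.

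First I would compute $N(\alpha)$ using (\ref{eqn2}) from Lemma~\ref{Cotz}: since $\n f(\alpha) = |\n f|\, N(\alpha)$, that identity reads
\be
|\n f|\, N(\alpha) = (1+f)|z|^2 + \frac{sf}{n-1}\,\alpha. \nonumber
\ee
Next I would differentiate once more in the $N$-direction, or equivalently combine this with (\ref{eqn2020-5-7-3}) for $N(|\n f|)$ and with the evolution of $|z|^2$ (which by Lemma~\ref{lem2020-8-23-2} is also a function of $f$ alone, and whose $N$-derivative can be extracted from $\langle \d C, z\rangle$-type identities or directly from differentiating (\ref{eqn2})), to obtain a second-order ODE, or a first-order ODE for $\alpha$ as a function of $f$, along the flow. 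The key qualitative input is that $s > 0$ because PIC forces positive scalar curvature — wait, that is a later ingredient; here I only need the CPE structure. Actually the cleanest route avoids the second derivative: I would argue by contradiction. Suppose $\max_M \alpha > 0$. The maximum of $\alpha$, viewed as a function of $f$, is attained either at an interior regular level or at a critical point of $f$.

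The heart of the argument is the interior case. At a regular level $f^{-1}(c)$ where $\alpha$ attains a positive maximum (as a function on $M$, hence as a function of $t=f$), we have $N(\alpha) = 0$ and, since it is a max along the flow, the next non-vanishing $N$-derivative is non-positive. From $|\n f|\,N(\alpha) = (1+f)|z|^2 + \tfrac{sf}{n-1}\alpha = 0$ at such a point we would read off a forced sign relation between $|z|^2$, $\alpha$, $f$; since $|z|^2 \ge \alpha^2 \ge 0$ and typically $1+f$ does not vanish there, this already pins down the sign of $sf$, and then differentiating the relation $|\n f|\,N(\alpha) \equiv (1+f)|z|^2 + \tfrac{sf}{n-1}\alpha$ once more and using $N(|\n f|) = (1+f)\alpha - \tfrac{sf}{n(n-1)}$ together with $N(|z|^2)$ should produce a strict inequality contradicting maximality. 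The boundary case — $\alpha$ maximal at $p \in \mathrm{Crit}(f)$ — is handled by a local analysis near the critical point: there $\n f(p) = 0$, so $Ddf(p) = \tfrac{sf(p)}{n(n-1)}g$ by (\ref{eqn2020-5-7-2}), which forces $f(p)$ to be $\max f$ or $\min f$ with a definite sign, and taking the limit of the identity (\ref{eqn2}) (dividing by $|\n f|^2$ and using L'Hôpital as $p$ is approached along the flow) yields $(1+f(p))|z(p)|^2 + \tfrac{sf(p)}{n-1}\alpha(p) \le 0$ together with a companion inequality, again forcing $\alpha(p) \le 0$.

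The main obstacle I anticipate is making the interior maximum-principle step rigorous: turning "second $N$-derivative $\le 0$ at a maximum" into an actual contradiction requires knowing that $1+f \ne 0$ at that level (which follows since $\min f \ge -1$ would already give Einstein, as noted in the introduction, so one may assume $1+f$ changes sign — but then one must locate the maximum of $\alpha$ relative to the zero set of $1+f$), and it requires an expression for $N(|z|^2)$ that is clean enough to combine with the others. I would get $N(|z|^2)$ by differentiating (\ref{eqn2}) directly, or from Lemma~\ref{lem2020-5-7-10} evaluated in the $NN$-direction using $\widehat C(N,N) = C(N,\n f,N) = 0$ (from $\tilde i_{\n f}C = 0$ in Lemma~\ref{Cotz}) and $z\circ z(N,N) = \alpha^2$ (from $i_{\n f}z = \alpha\,\n f$); this gives an identity for $(n-1)\,\d T(N,N)$ purely in terms of $\alpha$, $|z|^2$, $f$, $s$ and $N(\alpha)$, which is exactly the relation needed to close the ODE analysis. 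A secondary subtlety is the possibility that $f$ has critical points other than the global max and min — the introduction promises this does not happen under (\ref{eqn2020-5-7-1}), so I would either invoke that or, if this lemma is logically prior, restrict the critical-point analysis to the two extreme values, which is all the argument actually uses.
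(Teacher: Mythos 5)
Your proposal takes a local maximum-principle route along the gradient flow, which is genuinely different from the paper's argument, and as sketched it does not close. The paper's proof is a short global one: since ${\rm div}(i_{\n f}z)=(1+f)|z|^2$ and $\a$ is constant on each level set (Lemma~\ref{lem2020-8-23-2}), if $\a>0$ on some $f^{-1}(c)$ one integrates over $\{f\le c\}$ when $c\le -1$ (where $1+f\le 0$) to get $0\ge\int_{f\le c}(1+f)|z|^2=\int_{f=c}\a|\n f|>0$, and over $\{f\ge c\}$ when $c>-1$ (where $1+f\ge 0$) to get $0\le\int_{f\ge c}(1+f)|z|^2=-\int_{f=c}\a|\n f|<0$. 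This rules out $\a>0$ on \emph{every} level set at once; there is no need to locate a maximum of $\a$ or to touch critical points of $f$.

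The concrete gaps in your version are these. (i) The first-order condition at an interior maximum of $\a$ as a function of $t=f$, namely $(1+c)|z|^2+\frac{sc}{n-1}\a=0$ with $\a>0$, does give contradictions for $c\ge 0$ and for $c\le -1$, but in the band $-1<c<0$ the two terms have opposite signs and can cancel, so nothing follows. You are then pushed to the second-order condition, which requires $\n f(|z|^2)$ with a controlled sign. Your proposed source for it, Lemma~\ref{lem2020-5-7-10} evaluated at $(N,N)$, does not deliver this: since $D_NN=0$, the term $D_{\n f}z(N,N)$ there is again $\n f(\a)$, not the contraction $\langle D_{\n f}z,z\rangle$ needed for $\n f(|z|^2)$; and even with that derivative in hand, $(1+f)\n f(|z|^2)$ has no a priori sign, so ``the next nonvanishing $N$-derivative is nonpositive'' cannot be converted into a contradiction. (ii) The boundary case, where $\sup_M\a$ is only approached at a critical point of $f$, is treated heuristically (``L'H\^opital \dots yields \dots together with a companion inequality''), and you cannot invoke the classification of critical points here: Lemma~\ref{cor191} comes \emph{after} the present lemma and uses $\a\le 0$ as an input. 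The repair is to drop the pointwise analysis and apply the divergence theorem to $i_{\n f}z$ on sub- and super-level sets, exactly as above.
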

\begin{proof}
First assume that each level set $f^{-1}(t)$ is connected.
Suppose that $\a >0$ on a level set $f^{-1}(c)$.
If $c \le -1$, the divergence theorem shows that
$$
0\geq \int_{f \le c} (1+f)|z|^2= \int_{f \le c} {\rm div}(i_{\n f}z) = \int_{f=c} \a |\nabla f|, 
$$
which is impossible as $\a$ is constant on each level set. 
Note that $-1$ is a regular value of $f$ (see Appendix).
 If $c>-1$, the divergence theorem states that
$$
0\le  \int_{f \ge c} (1+f)|z|^2= - \int_{f=c} \a |\nabla f|, 
$$
which is also impossible. In any case, we get a contradiction.
For general case, i.e., if level sets may not be connected, refer \cite{ych} 
(In the proof of Lemma 5.2 there,  we used only the CPE and the condition $z(\n f, X) = 0$ for $X \perp \n f$).

\end{proof}

\begin{lem}\label{cor191} 
Let $(g,f)$ be a non-trivial solution of (\ref{cpe}) on an $n$-dimensional compact manifold $M$ 
with $z(\n f, X) = 0$ for $X \perp \n f$. Then, there are no critical points 
of $f$ except at its maximum and minimum points unless $g$ is an Einstein.
\end{lem}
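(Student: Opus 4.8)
The plan is to prove the statement in the form: \emph{if $f$ has a critical point $p$ with $f_{\min}<f(p)<f_{\max}$, then $g$ is Einstein}. Write $\psi(t)$ for the common value of $|\nabla f|^2$ on a regular level set $f^{-1}(t)$ and $A(t)$ for its $(n-1)$-volume (both well defined by Lemma~\ref{lem2020-8-23-2}); recall $i_{\nabla f}z=\alpha\,\nabla f$ (Lemma~\ref{lem2020-8-12-1}), $\alpha\le 0$ (Lemma~\ref{lem2020-8-26-2}), and $s>0$ for a nontrivial CPE solution (a standard consequence of (\ref{cpet}), since $\int_M|\nabla f|^2\,dv=\tfrac{s}{n-1}\int_M f^2\,dv$). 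I may assume $f_{\min}<-1<f_{\max}$: if $\min f\ge -1$ then $(1+f)|z|^2\ge 0$ while $\int_M(1+f)|z|^2\,dv=\int_M{\rm div}(i_{\nabla f}z)\,dv=0$, forcing $z\equiv 0$; and $\int_M f\,dv=0$ gives $\max f>0$. Since $-1$ is a regular value (Appendix), $c:=f(p)\ne -1$.

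The driving object is the function obtained by integrating ${\rm div}(i_{\nabla f}z)=(1+f)|z|^2$, i.e. ${\rm div}(\alpha\,\nabla f)=(1+f)|z|^2$, over sublevel sets: for a regular value $t$ the divergence theorem gives
$$ h(t):=\int_{\{f<t\}}(1+f)|z|^2\,dv=\alpha(t)\,\sqrt{\psi(t)}\,A(t), $$
and $h$ extends continuously to $[f_{\min},f_{\max}]$ with $h(f_{\min})=h(f_{\max})=0$ (the latter since $\int_M(1+f)|z|^2\,dv=0$). From $\alpha\le 0$ we get $h\le 0$, while the coarea formula gives $h'(t)=\tfrac{(1+t)\,|z|^2(t)\,A(t)}{\sqrt{\psi(t)}}$ at regular values; hence $h$ is non-increasing on $(f_{\min},-1)$ and non-decreasing on $(-1,f_{\max})$.

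The central claim is $h(c)=0$. If $\nabla f$ vanishes on an open set then $f$ is constant there, $Ddf=0$, and (\ref{eqn2020-5-7-2}) together with $s>0$ forces that constant to be $0$ and $z\equiv 0$ on that open set; otherwise $p$ is a limit of regular points $y_k$, so $\psi(f(y_k))=|\nabla f|^2(y_k)\to 0$ with $f(y_k)\to c$, and (the level-set areas being bounded on the compact $M$) $h(f(y_k))\to 0$, whence $h(c)=0$ by continuity. When the level sets are connected one in fact gets $|\nabla f|\equiv 0$ on $f^{-1}(c)$, using that $|\nabla f|$ is locally constant on the regular part of $f^{-1}(c)$ thanks to the hypothesis $z(\nabla f,\cdot)\perp\nabla f$; the general case follows as in \cite{ych}. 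Now $c\ne -1$, so $c$ lies in one of the two intervals on which $h$ is monotone; say $c\in(-1,f_{\max})$ (the case $c\in(f_{\min},-1)$ being symmetric, with $[f_{\min},c]$ in place of $[c,f_{\max}]$). On $[c,f_{\max}]$ the function $h$ is non-decreasing, $\le 0$, and vanishes at both ends, hence $h\equiv 0$ there; therefore $h'\equiv 0$ at the regular values in $(c,f_{\max})$, i.e. $|z|^2(t)=0$ there (recall $1+t>0$), and by density of regular values and continuity of $|z|^2$ I conclude $z\equiv 0$ on the nonempty open set $\{c<f<f_{\max}\}$.

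It remains to globalize. A nontrivial CPE solution $(g,f)$ is real-analytic off the regular hypersurface $\{f=-1\}$, because in harmonic coordinates $(1+f){\rm Ric}_{ij}=-\tfrac{1+f}{2}g^{k\ell}\partial_k\partial_\ell g_{ij}+(\text{l.o.t.})$, so together with $\Delta f=-\tfrac{s}{n-1}f$ the pair solves an elliptic system with analytic coefficients on $\{f\ne -1\}$. Hence $z$, vanishing on the open set $\{c<f<f_{\max}\}$ which accumulates at the maximum of $f$, vanishes on all of $\{f>-1\}$, and then $0=\int_M(1+f)|z|^2\,dv=\int_{\{f<-1\}}(1+f)|z|^2\,dv$ with $1+f<0$ there forces $z\equiv 0$ on all of $M$; thus ${\rm Ric}=\tfrac{s}{n}g$ and $g$ is Einstein. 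The main obstacle is the identity $h(c)=0$: it rests on the rigidity that $|\nabla f|$ is constant on level sets (Lemma~\ref{lem2020-8-23-2}), on a boundedness statement for level-set areas, and on controlling the topology of $f^{-1}(c)$ and of $\{f>-1\}$ (possible disconnectedness, or a critical set with interior); a secondary genuine point is the closing unique-continuation step, which must be carried across $\{f=-1\}$ and without which one only obtains that $g$ is Einstein near the maximum of $f$.
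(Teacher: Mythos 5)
Your route is genuinely different from the paper's. The paper argues pointwise at the critical point: it applies the Bochner--Weitzenb\"ock formula to $|\nabla f|^2$ and the maximum principle to exclude critical points in $\{f<-1\}$ and in $\{f\ge 0\}$, and then invokes Lemma 3.9 of \cite{ych} to rule out critical points with nonvanishing $z$ in $\{-1<f<0\}$, before finishing with the divergence theorem and analyticity. You instead run a monotonicity argument for the flux $h(t)=\int_{\{f<t\}}(1+f)|z|^2$. The skeleton of your argument --- sign and monotonicity of $h$ on either side of $-1$, hence $h\equiv 0$ between $c$ and the nearer extremum once $h(c)=0$ is known, hence $z\equiv 0$ there, then analyticity plus the integral identity to cross $\{f=-1\}$ --- is sound and arguably cleaner, and your endgame coincides with the paper's.

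The gap is the central claim $h(c)=0$. You write $h(t_k)=\alpha(t_k)\sqrt{\psi(t_k)}\,A(t_k)$ and let $\sqrt{\psi(t_k)}\to 0$, but to conclude $h(t_k)\to 0$ you need $A(t_k)$ bounded, and ``the level-set areas being bounded on the compact $M$'' is not a general fact: for a merely smooth $f$ the $(n-1)$-volumes of level sets can blow up as $t$ approaches a degenerate critical value (only $\int A(t)\,dt=\int_M|\nabla f|\,dv<\infty$ is automatic from the coarea formula). What the divergence theorem gives for free is boundedness of the product $\sqrt{\psi(t)}\,A(t)=\int_{f^{-1}(t)}|\nabla f|\,d\sigma=\tfrac{s}{n-1}\bigl|\int_{\{f<t\}}f\,dv\bigr|$, which is exactly not enough to force $h(t_k)\to 0$; you would need the real-analyticity of $f$ together with a nontrivial uniform volume bound for level sets of analytic functions, or a different route to $h(c)=0$ altogether. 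A second, related soft spot: $\psi(t_k)=|\nabla f|^2(y_k)$ only controls $|\nabla f|$ on the connected component of $f^{-1}(t_k)$ through $y_k$, so for disconnected level sets $h(t_k)$ picks up contributions from components far from $p$ on which $|\nabla f|$ need not be small; you defer this to \cite{ych}, but it compounds the area issue. The paper's pointwise argument at $p$ never evaluates the flux at the critical level and so sidesteps both problems; as written, your proof is incomplete at precisely this step.
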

\begin{proof}
Note that $|\nabla f|^2$ is constant on each level set of $f$.  
From the Bochner-Weitzenb\"ock formula together with (\ref{eqn2020-5-7-2}),   
we have
$$ 
\frac 12 \tr |\nabla f|^2-\frac 12 \frac {\nabla f(|\nabla f|^2)}{1+f} 
+\frac s{n(n-1)}\frac 1{1+f}|\nabla f|^2=|Ddf|^2.
$$
By  maximum principle,  the function $|\nabla f|^2$ cannot have its local maximum in 
$M_0:=\{x\in M \, |\, f(x) < -1\}$.
Let $p$ be a critical point of $f$ in $M$ other than the minimum or maximum points 
of $f$. From the argument above, we can see that 
 $p$ cannot be in $M_0:=\{x\in M \, |\, f(x) < -1\}$ unless $f$ is a constant.
 In fact, if $p \in M_0$ and $p$ is not a minimum point, then there should a local maximum point of $|\nabla f|^2$ in $\{x\in M\,|\, f < f(p)\}$,  which is impossible.
 We also claim that $p$ cannot be in $M^0:=\{x\in M \, |\, f(x) > -1\}$. 
 Recall that we have $f(p) \ne -1$ and
$$ 
\nabla f(|\nabla f|^2)= \frac {2s}{n(n-1)}|\nabla f|^2>0
$$
on the set $f=-1$. This shows that  $|\nabla f|^2 $ might have its maximum at a point, say $q$, in $M^0$. However,  since
$$ 
0 =\frac 12 \nabla f(|\nabla f|^2)=(1+f) \a |\nabla f|^2 -\frac {sf}{n(n-1)}|\nabla f|^2
$$
at the point $q$ and $(1+f)\a |\nabla f|^2 \le 0$, $q$  should lie in 
$\Omega:=\{x\in M \, |\, -1<f(x)<0\}$.
This implies that there are no critical points of $f$ in $M^0\setminus \Omega$.
Now, if $|z|^2(p)\neq 0$ with $p \in \Omega$, by Lemma 3.9 of \cite{ych},
 the point $p$ has to be a local maximum point of $f$, which is not possible because 
$$ 
\tr f=-\frac s{n-1}f>0
$$
on $\Omega$. Thus, we have $|z|^2(p) = 0$ with $p \in \Omega$. In particular,
if we let $f(p) = c$ with $-1< c <0$, then we have $\a =0$ on the level set $f^{-1}(c)$. 
Finally, from the divergence theorem, we have
\bea
0 \le   \int_{f >c}(1+f)|z|^2 &=& - \int_{f=c} \a |\n f| = 0,
\eea
which means $z=0$ on the set $f \ge c$. Then, by the analyticity of $g$ and $f$, 
$z$ must be vanishing on the set $M^0$ and consequently on the entire $M$.
\end{proof}
 
Furthermore, we can show that the potential function $f$ has only one maximum point and only one minimum point when $(M, g)$ has positive isotropic curvature.

\begin{thm}\label{thm2019-12-23-1}
Let $(g,f)$ be a non-trivial solution of the CPE on an $n$-dimensional compact manifold $M$ 
with $z(\n f, X) = 0$ for $X \perp \n f$. Then, there are  only two isolated critical points of $f$, in other words, only one maximum point and only one minimum point of $f$ on $M$ exist.
\end{thm}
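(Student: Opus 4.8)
The plan is to show that $f$ is a Morse function all of whose critical points have index $0$ or $n$, and then to count its maxima and minima by a connectedness and duality argument. If $g$ is Einstein, then by Obata~\cite{Ob} $(M,g)$ is a round sphere and $f$ is a multiple of a first eigenfunction, which has exactly one maximum and one minimum point; so assume from now on that $g$ is not Einstein. By Lemma~\ref{cor191} we then have ${\rm Crit}(f)=f^{-1}(a)\sqcup f^{-1}(m)$, where $a=\max_M f$ and $m=\min_M f$, and both level sets are nonempty. Since a metric of positive isotropic curvature has positive scalar curvature \cite{m-w}, integrating (\ref{cpet}) gives $\int_M f\,dv_g=0$, so $a>0>m$; also $a\neq-1\neq m$ because $-1$ is a regular value of $f$ (see the Appendix). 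Evaluating (\ref{eqn2020-5-7-2}) at a critical point yields
\[
Ddf(p)=(1+a)\,z(p)-\frac{sa}{n(n-1)}\,g,\qquad p\in f^{-1}(a),
\]
and the analogous identity with $a$ replaced by $m$ at every point of $f^{-1}(m)$. Hence, once we know that $z$ vanishes at each extremum of $f$, the Hessian of $f$ equals $-\frac{sa}{n(n-1)}g<0$ at every point of $f^{-1}(a)$ and $-\frac{sm}{n(n-1)}g>0$ at every point of $f^{-1}(m)$; in particular ${\rm Crit}(f)$ is finite, $f$ is a Morse function, and all its critical points have index $n$ (the maxima) or $0$ (the minima). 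The whole statement thus reduces to the claim that $z(p)=0$ at every maximum point $p$ and every minimum point of $f$.

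To prove this claim I would use that, by Lemma~\ref{lem2020-8-23-2}, the function $\alpha=z(N,N)$ (with $N=\nabla f/|\nabla f|$) is constant on each level set, while by Lemma~\ref{lem2020-8-26-2} and \cite{ych} it extends to a continuous function $\bar\alpha\le0$ on $M$; being constant on level sets, $\bar\alpha$ depends on $f$ alone. Because $D_NN=0$ by Lemma~\ref{lem2020-8-12-1}, the integral curves of $N$ on $M\setminus{\rm Crit}(f)$ are unit speed geodesics along which $f$ strictly increases, and, $f$ being real analytic, each such geodesic converges as $f\to a$ to a point of $f^{-1}(a)$ with a well defined limiting direction $e$; along it $N\to e$, so $z(p)(e,e)=\lim\alpha=\bar\alpha(a)\le0$. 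Since $Ddf(p)$ and $z(p)$ are simultaneously diagonalizable, a geodesic integral curve of $N$ can be made to approach $p$ tangentially to every eigendirection of $Ddf(p)$: along an eigendirection with nonzero (hence negative) eigenvalue this is the standard stable manifold picture, and along a null eigendirection one uses the center manifold, on which $p$, being a global maximum of $f$, is still attracting for the ascending flow. It follows that $z(p)(e_i,e_i)=\bar\alpha(a)$ for every eigenvector $e_i$, so $z(p)=\bar\alpha(a)g$, and tracelessness of $z$ forces $z(p)=0$; the same argument applies at minima. The step I expect to be the main obstacle is precisely making this gradient flow analysis rigorous near the extremal level sets when $f$ admits a degenerate critical point there (equivalently, when an extremal set is positive dimensional): one then has to exclude Morse--Bott type degeneracies, which is where I would combine the real analyticity of $(g,f)$, the sign $\bar\alpha\le0$, the second fundamental form identities of the level sets (together with (\ref{eqn2}) and Lemma~\ref{lem2019-5-28-5}), and the positivity of the scalar curvature supplied by the PIC hypothesis, along the lines of the rigidity arguments in \cite{ych}.

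Granting the claim, $f$ is a Morse function on the closed connected manifold $M$ with $\ell$ critical points of index $0$, none of index $1,\dots,n-1$, and $k$ critical points of index $n$, so $M$ has the homotopy type of a CW complex with $\ell$ zero cells, no cells of dimension between $1$ and $n-1$, and $k$ cells of dimension $n$. Since $M$ is connected and the $1$-skeleton of this complex is a set of $\ell$ points with no $1$-cells attached, necessarily $\ell=1$; then $H_j(M;\mathbb{Z}/2)=0$ for $0<j<n$ and $H_n(M;\mathbb{Z}/2)\cong(\mathbb{Z}/2)^k$, so Poincar\'e duality for the closed manifold $M$ gives $(\mathbb{Z}/2)^k\cong H_0(M;\mathbb{Z}/2)=\mathbb{Z}/2$, i.e.\ $k=1$. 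Therefore $f$ has exactly one maximum point and one minimum point, and these are its only (necessarily isolated) critical points.
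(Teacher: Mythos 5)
Your reduction is the reverse of what the logic allows, and this is where the proposal breaks down. You reduce the theorem to the claim that $z_p=0$ at every extremal point, and you obtain that claim by letting gradient flow lines approach $p$ tangentially to \emph{every} eigendirection of $Ddf_p$, so that $z_p(e,e)=\bar\a$ for all unit $e$ and tracelessness kills $z_p$. But the case the theorem must exclude is precisely the one where the extremal set $\Sigma=f^{-1}(a)$ is positive-dimensional (a hypersurface, by Lemma~\ref{cor191}), and in that case the integral curves of $N$ reach $\Sigma$ only along the normal direction $\nu$: the continuity of $\bar\a$ then yields only $z_p(\nu,\nu)=\bar\a$, and nothing about tangential directions. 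Indeed the paper computes in this scenario that $z(\nu,\nu)=-\tfrac{sa}{n(1+a)}\neq 0$ and $z_p(e_i,e_i)=-\tfrac{1}{n-1}z_p(\nu,\nu)$ for tangential $e_i$, so $z_p\neq 0$ there; your claim ``$z=0$ at extrema'' is simply false in the configuration you need to rule out, and hence cannot be the tool that rules it out. Your acknowledged ``main obstacle'' paragraph (real analyticity, $\bar\a\le 0$, second fundamental form identities, rigidity ``along the lines of \cite{ych}'') is a list of ingredients, not an argument, and it is exactly where the entire content of the theorem lives.

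For contrast, the paper's proof attacks the hypersurface case directly: it first shows from $Ddf\ge 0$ on the minimum set that $Ddf_p(e_i,e_i)=0$ and $z_p(e_i,e_i)=-\tfrac 1{n-1}z_p(\nu,\nu)>0$ tangentially, then (via a delicate limiting computation of $D_{e_i}\nu$ along geodesics in $\Sigma$, using the Assertion $\nu(z(\gamma',\gamma'))|_{t=0}=0$) proves $\Sigma$ is totally geodesic, and finally derives a contradiction from ${\rm Ric}(\nu,\nu)=\tfrac{s}{n(1+a)}<0$ together with the stability inequality and the Fredholm alternative of Fischer-Colbrie--Schoen. None of this machinery appears in your proposal. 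Two smaller points: positivity of $s$ is not ``supplied by the PIC hypothesis'' --- Theorem~\ref{thm2019-12-23-1} does not assume PIC, and $s>0$ already follows from (\ref{cpet}) since $f$ is a nonconstant eigenfunction; and an isolated critical point can be degenerate, so ``degenerate critical point'' is not equivalent to ``positive-dimensional extremal set.'' Your Morse-theoretic endgame ($\ell=k=1$ by connectedness and Poincar\'e duality) is fine once nondegeneracy is known, but the nondegeneracy is the theorem.
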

\begin{proof}
Let ${\min_M f = a}$ and suppose $f^{-1}(a)$ is not discrete. By Lemma~\ref{cor191}, 
all the connected components of the level hypersurface $f^{-1}(t)$ for any regular value 
$t$ of $f$ have the same topological type.
In particular, since $M$ is smooth and $f^{-1}(a)$ is not discrete, $f^{-1}(a)$
 must be a hypersurface and it also has the same topological type as any
  connected component of $f^{-1}(t)$ for any regular value $t$ of $f$.
  Moreover, for a sufficiently small $\epsilon >0$, $f^{-1}(a+\epsilon)$ has two
  connected components, say $\Sigma_\epsilon^+, \,\, \Sigma_\epsilon^-$.
  Note that $\Sigma_0^+=\Sigma_0^- = f^{-1}(a)$.
  
  Let $\nu$ be a unit normal vector field on $\Sigma:= f^{-1}(a)$. Then, 
  $\nu$ can be extended smoothly to a vector field $\Xi$ defined on a tubular neighborhood 
  of $f^{-1}(a)$ such that   $\Xi|_{f^{-1}(a)} = \nu$ and 
  $\Xi|_{\Sigma_\epsilon^+} =  N = \frac{\n f}{|\n f|}$, 
  $\Xi|_{\Sigma_\epsilon^-} =  -N = -\frac{\n f}{|\n f|}$.
 Note that
 $$
 \lim_{\epsilon\to 0+} N = \lim_{\epsilon \to 0-}(-N)  = \nu.
 $$
  On the hypersurface $f^{-1}(a+\epsilon)$ near $f^{-1}(a)$, the Laplacian of $f$ is given by
  $$
  \Delta f = Ddf(N, N) + m |\n f|,
  $$
  where   $m$ denotes the mean curvature of $f^{-1}(a+\epsilon)$. 
  In particular, by letting $\e \to 0$, we have
  $$
  \Delta f =   Ddf(\nu, \nu)
  $$
  on $f^{-1}(a)$. Note that the mean curvature $m$ does not blow up on $f^{-1}(a)$.
  Therefore, from the CPE\, 
  $  -\frac{sf}{n-1} = (1+f)z(\nu, \nu) - \frac{sf}{n(n-1)}$, we have
\be
 z(\nu, \nu) = - \frac{sf}{n(1+f)} = - \frac{sa}{n(1+a)} \label{eqn2019-1210-1}
  \ee
on the set $\Sigma = f^{-1}(a)$. In particular, we have $z_p(\nu, \nu) <0$ , since we may assume $a<-1$ as mentioned in Introduction.
  
  Now, since $a$ is the minimum value of $f$, for
each point $p \in f^{-1}(a)$, the index of $Ddf_p$ is zero, i.e., for any vector
$v $ at $p$, we have $Ddf_p(v, v) \ge 0$.
Choosing an orthonormal basis $\{e_1 = \nu(p), e_2, \cdots, e_n\}$ on $T_pM$, we obtain
$$
Ddf_p(\nu, \nu ) = (1+f)z_p(\nu, \nu) - \frac{sf}{n(n-1)}\ge 0
$$
and for all $2\le i \le n$,
$$
Ddf_p(e_i, e_i ) = (1+f)z_p(e_i, e_i) - \frac{sf}{n(n-1)}\ge 0.
$$
In particular, from (\ref{eqn2019-1210-1}),
$$
z_p(e_i, e_i) \le \frac{s}{n(n-1)}\cdot \frac{a}{1+a} = - \frac{1}{n-1} z_p(\nu, \nu).
$$
Summing up $i=2, \cdots, n$, we can see
\be
z_p(e_i, e_i) = - \frac{1}{n-1} z_p(\nu, \nu) = \frac{s}{n(n-1)}\cdot\frac{a}{1+a} >0\label{eqn2021-1-25-1}
\ee
 for each  $i=2, \cdots, n$.
Thus, for each $i =2, \cdots, n$,
\be
Ddf_p(e_i, e_i) = (1+f)z_p(e_i, e_i) - \frac{sf}{n(n-1)} = 0.\label{eqn2021-1-24-1}
\ee

 \vspace{.2in}
Now, we claim that the minimum set  $\Sigma = f^{-1}(a)$ is totally geodesic  and in particular, the mean curvature is vanishing, i.e., $m=0$ on $\Sigma$.
In fact, fix $i$ for $i=2,3, \cdots, n$ and  let $\gamma : [0, l) \to M$ be a unit speed geodesic such that $\gamma(0) =p\in \Sigma$
and $\gamma'(0) = e_i \in T_p\Sigma$.
Then we have 
\be
D_{\gamma'} N &=&
\langle \gamma', N\rangle N\left(\frac{1}{|\n f|}\right)\n f + \frac{1}{|\n f|}\left[(1+ f) z(\gamma', \cdot) - \frac{sf}{n(n-1)}\gamma'\right].
\label{eqn2020-6-17-1}
\ee
Recall that, by (\ref{eqn2020-5-7-4})
$$
N\left(\frac{1}{|\n f|}\right)  = - \frac{1}{|\n f|^2} \left((1+ f)\alpha - \frac{sf}{n(n-1)}\right).
$$
Let 
$$
\gamma' = \langle \gamma', N\rangle N + (\gamma')^{\top},
$$
where $\gamma'(t)^{\top}$ is the tangential component of $\gamma'(t)$ to $f^{-1}(\gamma(t))$, 
and substituting these into (\ref{eqn2020-6-17-1}), we obtain
$$
|\n f| D_{\gamma'} N =   (1+ f) z((\gamma')^\top, \cdot)  - \frac{sf}{n(n-1)}(\gamma')^\top.
$$
Taking the covariant derivative in the direction $N$, we have
\bea
&&Ddf(N, N) D_{\gamma'}N + |\n f|D_ND_{\gamma'}N \\
&&\qquad
= 
|\n f| z((\gamma')^\top, \cdot)  + (1+f) D_N[z((\gamma')^{\top}, \cdot)] -  \frac{s}{n(n-1)}|\n f| (\gamma')^\top - \frac{sf}{n(n-1)}D_N \gamma'^{\top}.
\eea
Letting $t\to 0+$, we obtain
\be
-\frac{sa}{n-1} D_{e_i}\nu = (1+a) D_{\nu} [z((\gamma')^{\top}, \cdot)]\bigg|_p - \frac{sa}{n(n-1)} D_{\nu} (\gamma')^{\top}\bigg|_p\label{eqn2021-1-25-2}
\ee
because the covariant derivative depends only on the point $p$ and initial vector $e_i$.
Now since $z(N, X) = 0$ for $X \perp N$, we may assume that $\{e_i\}_{i=2}^n$ diagonalizes $z$ at the point $p$. Then
\be
z(D_{\nu} (\gamma')^{\top}, e_i)\bigg|_p = 0.\label{eqn2021-1-25-3}
\ee

\vspace{.15in}

{\bf Assertion:}\, $\nu(z(\gamma'(t), \gamma'(t))|_{t=0} = 0.$

\vspace{.15in}
\noindent
Defining $\vp(t):= f\circ \gamma(t)$, we have  $\vp'(0) =0$ and also $\vp''(0) = Ddf(e_i, e_i) = 0$ by (\ref{eqn2021-1-24-1}).
Since $\Sigma$ is the minimum set of $f$, $\vp'(t)$ is nondecreasing when $\vp(t)$ is sufficiently close to $a = \min f$ and
so $\vp''(t) \ge 0$ for sufficiently small $t$. So
\be
\vp''(t) = Ddf(\gamma'(t). \gamma'(t)) = [1+\vp(t) ] z(\gamma'(t), \gamma'(t)) - \frac{s}{n(n-1)}\vp(t) \ge 0\label{eqn2021-1-26-1}
\ee
for sufficiently small $t>0$.  However, by (\ref{eqn2021-1-25-1}), we have $z(\gamma'(t), \gamma'(t)) >0$ for sufficiently small $t>0$.
So,
$$
0< z(\gamma'(t), \gamma'(t)) \le \frac{s}{n(n-1)}\cdot \frac{\vp(t)}{1+\vp(t)}
$$
for sufficiently small $t>0$. Defining $\xi(t) = z(\gamma'(t), \gamma'(t)) - \frac{s}{n(n-1)}\cdot \frac{\vp(t)}{1+\vp(t)}$, we have
$\xi(t) \le 0$ and $\xi(0) = 0$ by  (\ref{eqn2021-1-24-1}). Thus,
$$
\xi'(0)  = \frac{d}{dt}\bigg|_{t=0} z(\gamma'(t), \gamma'(t)) \le 0.
$$
Now considering an extension of $\gamma$ to an interval $(-\e, 0]$, 
 we can see that $\frac{d}{dt}|_{t=0} z(\gamma'(t), \gamma'(t))$ cannot be negative since $\Sigma$ is the minimum set of $f$. 
In other words, we must have
$$
\frac{d}{dt}\bigg|_{t=0} z(\gamma'(t), \gamma'(t)) = 0,
$$
which completes the Assertion.

\vspace{.12in}
Let $\{N, E_2, \cdots, E_n\}$ be a local frame around $p$ such that $E_i(p) = e_i$ for $2 \le i$. Then, for $i \ge 2$,
$$
z(E_i, \cdot) = \sum_{j=2}^n z(E_i, E_j) E_j
$$
and
$$
D_N[z(\gamma'^\top, \cdot)] = D_N \left[z(\gamma'^\top, E_j)E_j\right] = N\left(z(\gamma'^\top, E_j)\right)E_j + z(\gamma'^\top, E_j)D_NE_j.
$$
So,
$$
\langle D_N[z(\gamma'^\top, \cdot)], E_i\rangle  = N\left(z(\gamma'^\top, E_i)\right) + z(\gamma'^\top, E_j) \langle D_NE_j, E_i\rangle.
$$
Note that $z(\gamma', \gamma') = \Vert \gamma'^\top\Vert z(\gamma'^{\top}, E_i) + \langle \gamma', N\rangle^2 \a$ with $\a = z(N, N).$
Since $\Vert \gamma'^\top\Vert$ attains its maximum at $p$ and $\langle \gamma', N\rangle^2$ attains its minimum $0$ at $p$, we have
$$
\nu [z(\gamma', \gamma')]\bigg|_p =  \Vert \gamma'^\top\Vert \nu[z(\gamma'^{\top}, E_i)] + \langle \gamma', N\rangle^2 \nu(\a) = 
\nu[z(\gamma'^{\top}, E_i)]\bigg|_p,
$$
which shows that
\be
\nu[z(\gamma'^{\top}, E_i)]\bigg|_p = 0\label{eqn2021-2-4-1}
\ee
by {\bf Assertion}.
Letting $t\to 0$, and applying (\ref{eqn2021-1-25-3}) and (\ref{eqn2021-2-4-1}),  we obtain
\bea
\langle D_\nu [z(\gamma'^\top, \cdot)], E_i\rangle|_p  &=& \nu\left(z(\gamma'^\top, E_i)\right)|_{p} + z(e_i, e_i) \langle D_\nu E_i, E_i\rangle|_p\\
&=&0
\eea
Thus, by  (\ref{eqn2021-1-25-2}) and (\ref{eqn2021-1-25-3}) again, we have
$$
- \frac{sa}{n-1} \langle D_{e_i}\nu, e_i\rangle = (1+a) \langle D_{\nu}[z(\gamma'^\top, \cdot)], E_i\rangle|_p 
- \frac{sa}{n(n-1)} \langle D_{\nu} E_i, E_i\rangle|_p =0.
$$
That is, 
$$
\langle D_{e_i}\nu, e_i\rangle = 0.
$$
Hence,  the square norm of the second fundamental form $A$ is given by
$$
|A|^2(p) = \sum_{i=2}^n  \left|\left(D_{e_i}e_i\right)^\perp\right|^2 = \sum_{i=2}^n \langle D_{e_i}\nu, e_i\rangle^2 = 0,
$$
which shows $\Sigma = f^{-1}(a)$ is totally geodesic.

\vspace{.12in}

Finally, from (\ref{eqn2019-1210-1}), we have
$$
{\rm Ric}(\nu, \nu) =   \frac{s}{n}\cdot \frac{1}{1+a} <0.
$$
Furthermore, the stability operator for hypersurfaces with a vanishing second fundamental form clearly becomes
$$
\int_{\Sigma} \left[|\n \vp|^2 - {\rm Ric}(\nu, \nu)\vp^2\right] \ge 0
$$
for any function $\vp$ on $\Sigma$. By Fredholm alternative (cf. \cite{f-s}, Theorem 1), 
there exists a positive function $\vp>0$ on $\Sigma$ satisfying 
$$
\Delta^\Sigma \vp + {\rm Ric}(\nu, \nu)\vp =0.
$$
However, it follows from the maximum principle that
$\vp$ must be a constant since $\Sigma$ is a compact, which is impossible.
A similar argument shows that $f$ has only one maximum point.

\end{proof}

\section{Proof of Theorem~\ref{thm2020-8-22-3}}

In this section, we will prove Theorem~\ref{thm2020-8-22-3}. Let $(M^n, g, f)$ be a nontrivial solution of the CPE satisfying
$z(\n f, X) = 0$ for any vector field $X$ which is orthogonal to $\n f$.
From Theorem~\ref{thm2018-1-20-11}, it suffices to prove that $T=0$ or $(M,g)$ has  harmonic curvature. 
To this end, we introduce a warped product metric involving
$\frac {df}{|\nabla f|}\otimes \frac {df}{|\nabla f|}$ as a fiber metric on each level
set $f^{-1}(c)$. Consider a warped product metric $\bar{g}$ on $M$ by
\bea
\bar{g}= \frac {df}{|\nabla f|}\otimes \frac {df}{|\nabla f|}+|\nabla f|^2 g_{\Sigma},
\label{eqn2019-8-27-1}
\eea
where $g_{\Sigma}$ is the restriction of $g$ to  $\Sigma:= f^{-1}(-1)$. 
Note that, from Theorem~\ref{thm2019-12-23-1}, 
the metric $\bar g$ is smooth on $M$ except, possibly at two points, the maximum and minimum points of $f$. 
Furthermore, applying Morse theory \cite{mil} together with Theorem~\ref{thm2019-12-23-1}, we can see that
 $M$ is homeomorphic to ${\Bbb S}^{n}$, and fiber $f^{-1}(t)$  is topologically ${\Bbb S}^{n-1}$ except the two critical points of $f$.

The following lemma shows that $\nabla f$ is a conformal Killing vector field with respect to the metric $\bar{g}$.

\begin{lem}\label{lemt1} 
Let $(g,f)$ be a nontrivial solution of the CPE on
 an a $n$-dimensional compact manifold $M$ with $z(\n f, X) = 0$ for $X \perp \n f$. 
 Then,
$$\frac 12 {\mathcal L}_{\nabla f}\bar{g} =N(|\nabla f|)\bar{g}= \frac 1n 
(\bar{\tr }f)\, \bar{g}.
$$
Here, $\mathcal L$ denotes the Lie derivative.
\end{lem}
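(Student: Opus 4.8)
The plan is to bring $\bar g$ into the normal form of a genuine warped product over an interval and then carry out the Lie-derivative computation in those coordinates. Since $D_NN=0$ by Lemma~\ref{lem2020-8-12-1}, the integral curves of $N=\n f/|\n f|$ are unit-speed geodesics meeting every level set $f^{-1}(c)$ orthogonally, and by Theorem~\ref{thm2019-12-23-1} the ones issuing from $\Sigma=f^{-1}(-1)$ sweep out all of $M$ except the two poles; let $r$ be $g$-arclength along them, so that $dr=df/|\n f|$ and $N=\partial_r$. By Lemma~\ref{lem2020-8-23-2} the function $|\n f|$ is constant on level sets, and the regular level sets are connected (Morse theory together with Theorem~\ref{thm2019-12-23-1}), so $|\n f|=G(f)$ for a one-variable function $G$; along each of the above geodesics $f$ then solves $f'=G(f)$ with $f(0)=-1$, so by uniqueness $f=f(r)$ is the same function of $r$ on all of them, $r$ is a well-defined smooth function on $M$ minus the two poles, and $|\n f|=\phi(r)$ with $\phi:=G\circ f$. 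Identifying each regular level set with $\Sigma$ along the $N$-flow, this yields
$$
\bar g=dr^2+\phi(r)^2\,g_\Sigma ,
$$
where $g_\Sigma$ is the $r$-independent pullback tensor; moreover $f'(r)=\phi$, so $\n f=\bar\n f=\phi\,\partial_r$, i.e. the $g$- and $\bar g$-gradients of $f$ agree.

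Next I would compute $\mathcal L_{\phi\partial_r}\bar g$ piece by piece. From $\mathcal L_{\phi\partial_r}dr=d(\iota_{\phi\partial_r}dr)=d\phi=\phi'\,dr$ one gets $\mathcal L_{\phi\partial_r}(dr^2)=2\phi'\,dr^2$; and since $g_\Sigma$ and the fibre coordinate fields are $r$-independent, a short check gives $\mathcal L_{\phi\partial_r}g_\Sigma=0$, hence $\mathcal L_{\phi\partial_r}(\phi^2 g_\Sigma)=\phi\,\partial_r(\phi^2)\,g_\Sigma=2\phi'\,(\phi^2 g_\Sigma)$. Adding the two contributions,
$$
\tfrac12\,\mathcal L_{\n f}\bar g=\phi'\,\bar g=N(|\n f|)\,\bar g ,
$$
which is the first claimed equality, since $\phi'=\partial_r\phi=N(|\n f|)$. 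For the second, take the $\bar g$-trace of $\mathcal L_{\bar\n f}\bar g(X,Y)=\bar g(\bar D_X\bar\n f,Y)+\bar g(\bar D_Y\bar\n f,X)$, which gives $\operatorname{tr}_{\bar g}\mathcal L_{\n f}\bar g=2\,\bar{\tr} f$, while the first equality gives $\operatorname{tr}_{\bar g}\mathcal L_{\n f}\bar g=2n\,N(|\n f|)$; comparing, $\bar{\tr} f=n\,N(|\n f|)$, so $N(|\n f|)\,\bar g=\tfrac1n(\bar{\tr} f)\,\bar g$. Equivalently, $\bar{\tr} f=\phi^{-(n-1)}\partial_r(\phi^{n})=n\phi'$ can be read off directly from $\sqrt{\det\bar g}=\phi^{n-1}\sqrt{\det g_\Sigma}$.

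The one substantive step is the first paragraph; everything after it is routine. One must ensure that the $N$-flow genuinely furnishes global coordinates on $M$ minus the two critical points — this is exactly where Theorem~\ref{thm2019-12-23-1} (a single isolated maximum and a single isolated minimum) is indispensable — and that $|\n f|$ descends to a function of the single variable $r$, which needs both the constancy on level sets from Lemma~\ref{lem2020-8-23-2} and the connectedness of the regular level sets. No smoothness of $\bar g$ at the two poles is required, since the asserted conformal-Killing identity is an identity of tensors on the open set where $\bar g$ is smooth.
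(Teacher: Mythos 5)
Your proof is correct, but it reaches the identity by a different route than the paper. The paper never passes to arclength coordinates: it splits $\bar g$ into its two summands and computes $\mathcal L_{\nabla f}$ of each directly from the CPE, the key input being that (\ref{eqn2020-5-7-2}) together with $i_{\nabla f}z=\a\,df$ gives $Ddf(X,\nabla f)=\bigl((1+f)\a-\tfrac{sf}{n(n-1)}\bigr)df(X)=N(|\nabla f|)\,df(X)$ for every $X$, whence each summand scales by $2N(|\nabla f|)$ under $\mathcal L_{\nabla f}$; this is a pointwise tensorial computation that does not need $|\nabla f|$ to be a global function of $f$ alone. You instead first establish the one--variable reduction $|\nabla f|=\phi(r)$ and the normal form $\bar g=dr^2+\phi(r)^2 g_\Sigma$, and then the Lie derivative becomes a one--line coordinate computation. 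What your route buys is transparency (the conformal--Killing property is manifest once the normal form is in hand, and you get $\bar\Delta f=n\phi'$ for free from the volume density); what it costs is the extra global input — connectedness of the regular level sets and the absence of intermediate critical points from Theorem~\ref{thm2019-12-23-1} — needed to promote the level--set constancy of Lemma~\ref{lem2020-8-23-2} to the statement $|\nabla f|=G(f)$ and to make $r$ a global coordinate. Since the paper invokes Theorem~\ref{thm2019-12-23-1} anyway to make sense of $\bar g$ on $M$ minus the two poles, this extra reliance is harmless here, and your closing remark that no smoothness at the poles is needed is the right caveat. Both arguments use the same implicit fact that the extended fiber tensor $g_\Sigma$ is invariant under the flow of $\nabla f$, which you at least flag explicitly.
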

\begin{proof}
Note that, by (\ref{eqn2020-5-7-2}) we have,
$$ 
\frac 12 {\mathcal L}_{\nabla f} g  =D_gdf =(1+f)z -\frac {sf}{n(n-1)}g.
$$
Let $X$ and $Y$ be two vector fields with $X \perp \n f$ and $Y \perp \n f$. By the definition of Lie derivative, 
\bea
\frac 12{\mathcal L}_{\nabla f}(df \otimes df)(X,Y)&=& Ddf(X, \nabla f)df(Y)+df(X)Ddf(Y, \nabla f)\\
&=& 2\left( (1+f)\a -\frac {sf}{n(n-1)}\right) \, df\otimes df(X, Y).
\eea
Therefore, from (\ref{eqn2020-5-7-3}), 
\be 
\frac 12 {\mathcal L}_{\nabla f}\left( \frac {df}{|\nabla f|} \otimes \frac {df}{|\nabla f|} \right) 
= N(|\nabla f|)  \frac {df}{|\nabla f|} \otimes \frac {df}{|\nabla f|}.\label{eqnt2}
\ee
Since
$$ 
\frac 12 {\mathcal L}_{\nabla f} (|\nabla f|^2 g_{\Sigma})
= \frac 12 \nabla f(|\nabla f|^2) g_\Sigma
= Ddf(\nabla f, \nabla f)g_\Sigma= N(|\nabla f|) |\nabla f|^2 g_\Sigma,
$$
we conclude that
$$ 
\frac 12 {\mathcal L}_{\nabla f}\bar{g}=\bar{D}df= N(|\nabla f|) \bar{g}.
$$
In particular, we have $\bar{\tr} f =n N(|\nabla f|) $.
\end{proof}

\begin{lem} \label{lem2019-6-22-1}
Let $(g,f)$ be a nontrivial solution of the CPE on
 an $n$-dimensional compact manifold $M$ with $z(\n f, X) = 0$ for $X \perp \n f$. 
 Then $T=0$ on $M$.
\end{lem}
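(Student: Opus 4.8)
If $g$ is Einstein then $z=0$ and $T=0$, so assume $g$ is not Einstein. Put $w:=z+\tfrac{\a}{n-1}g$. By Lemma~\ref{lem2020-8-12-1} we have $i_{\n f}z=\a\,df$, hence $w(\n f,\cdot)=\tfrac{n\a}{n-1}\,df$ and the definition (\ref{defnt}) reduces to $T=\tfrac1{n-2}\,df\wedge w$; therefore $i_{\n f}T=\tfrac1{n-2}\big(|\n f|^2w-\tfrac{n\a}{n-1}df\otimes df\big)$, and restricting to vectors orthogonal to $\n f$ shows that at each point $T=0$ if and only if $z(X,Y)=-\tfrac{\a}{n-1}g(X,Y)$ for all $X,Y\perp\n f$, which by Lemma~\ref{lem2019-5-28-5} is equivalent to the vanishing of $Q:=|z|^2-\tfrac n{n-1}\a^2\ge 0$. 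By Lemma~\ref{lem2020-8-23-2} the functions $\a$ and $|z|^2$, hence $Q$, depend only on $f$ on $M\setminus{\rm Crit}(f)$, and $Q$ extends continuously to $M$; moreover $z$ (and so $Q$) vanishes at the two critical points of $f$, which are isolated by Theorem~\ref{thm2019-12-23-1}, since the continuity of $\a$ (Section~3) together with the fact that the unit normals $N$ of nearby level sets realize every direction at such a point forces $z$ to be a multiple of $g$ there, hence zero by tracelessness. So it suffices to prove $Q\equiv0$.

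To this end I would derive a differential identity for $Q=Q(f)$. From (\ref{eqn2020-5-7-3}) and (\ref{eqn2}) one already has, with $h:=|\n f|^2$ and primes denoting differentiation in $f$, the two relations $\tfrac12h'=(1+f)\a-\tfrac{sf}{n(n-1)}$ and $(1+f)|z|^2=h\a'-\tfrac{sf}{n-1}\a$; the third relation, which must carry curvature information, is supplied by Lemma~\ref{lemt1}. Indeed, $\bar{D}df=N(|\n f|)\,\bar{g}$ makes $(M\setminus\{p_\pm\},\bar{g})$ an honest warped product over an interval with fiber $(\Sigma,g|_\Sigma)$, so the level hypersurfaces of $f$ are totally umbilic for $\bar{g}$; comparing their shape operator with the second fundamental form of the same hypersurfaces for $g$—which by the CPE equals $\tfrac1{|\n f|}\big[(1+f)z-\tfrac{sf}{n(n-1)}g\big]$ restricted to $(\n f)^\perp$—exhibits the discrepancy as precisely the fiberwise traceless part of $z$, that is, as $T$ itself. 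Inserting this comparison, the identity $\d z=0$ (which on each $f^{-1}(c)$ says the fiberwise traceless part of $z$ is $g|_\Sigma$-divergence-free), and the divergence formula of Lemma~\ref{lem2020-5-7-10} into the second Bianchi identity $C=d^Dz$ and the vanishing $\tilde i_{\n f}C=0$ (Lemma~\ref{Cotz}) should produce an evolution equation for $z$ along $N$ whose $g$-inner product with $z$ reduces to a first-order linear equation $(1+f)Q'=\lambda(f)Q$ with $\lambda$ bounded away from $p_\pm$.

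Granting such an identity, the conclusion is immediate: $Q$ is continuous on the compact manifold $M$, solves a first-order linear homogeneous ODE along each flow segment of $N$, and vanishes at the endpoints $p_\pm$, so $Q\equiv0$ and therefore $T\equiv0$ on $M\setminus{\rm Crit}(f)$, hence on all of $M$; alternatively, integrating the identity over $M$ against a suitable positive weight yields $\int_M|T|^2=0$ directly. I expect the main obstacle to be the middle step: a priori only the norm of the fiberwise traceless part of $z$ is controlled by $f$, not the tensor itself, so the system cannot be closed by the CPE, $\d z=0$, and $\tilde i_{\n f}C=0$ alone—one genuinely needs the warped-product rigidity of $\bar{g}$ furnished by Lemma~\ref{lemt1} (equivalently, that $\n f$ is a conformal Killing field for $\bar{g}$) in order to rule out any non-umbilic behaviour of $z$ along the level sets of $f$.
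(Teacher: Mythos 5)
Your reduction is sound: by Lemma~\ref{lem2019-5-28-5}, $T=0$ is equivalent to the pointwise umbilicity $z(X,Y)=-\frac{\a}{n-1}g(X,Y)$ for $X,Y\perp\n f$, i.e.\ to $Q:=|z|^2-\frac{n}{n-1}\a^2\equiv 0$, and this is exactly the quantity the paper kills. But the heart of your argument --- the first-order linear identity $(1+f)Q'=\lambda(f)Q$ --- is never derived, and you yourself flag it as the main obstacle. The mechanism you propose for closing the system cannot work as stated. Comparing the second fundamental forms of $f^{-1}(c)$ with respect to $g$ and to $\bar g$ requires first relating $g|_{f^{-1}(c)}$ to the fiber metric $|\n f|^2 g_\Sigma$ of $\bar g$; but that relation (namely that $g|_{f^{-1}(c)}$ is a constant multiple of $g_\Sigma$) is precisely the umbilicity you are trying to prove, so the comparison is circular. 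Nor do $\d z=0$, $\tilde i_{\n f}C=0$ and Lemma~\ref{lem2020-5-7-10} close the system: $\n f(|z|^2)=2\langle D_{\n f}z,z\rangle$ brings in the components $C(N,E_j,E_k)$ with $j,k\ge 2$ through $C=d^Dz$, and these are not controlled by $\tilde i_{\n f}C=0$, which only kills $C(\cdot,\cdot,\n f)$. Lemma~\ref{lemt1} by itself carries no curvature rigidity either: since $g_\Sigma$ is flow-invariant and $|\n f|$ is constant on level sets, the fiber part of that Lie-derivative identity is essentially formal.

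The missing ingredient is global, not local. The paper applies Tashiro's classification \cite{tas} to the concircular field $\n f$ on $(\bar M,\bar g)$, together with Theorem 1 of \cite{bgv}, to conclude that every level hypersurface $(f^{-1}(t),g|_{f^{-1}(t)})$ is a round sphere $r(t)\, g_{{\Bbb S}^{n-1}}$; only then can the original metric $g$ itself be written as a warped product $\frac{df}{|\n f|}\otimes\frac{df}{|\n f|}+b(f)^2g_\Sigma$, and comparing the two expressions for $\frac12{\mathcal L}_{\n f}g$ yields $z(E_i,E_j)=-\frac{\a}{n-1}\d_{ij}$ tangentially, hence $Q=0$. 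Your endpoint analysis (continuity of $Q$ and vanishing of $z$ at the two isolated critical points supplied by Theorem~\ref{thm2019-12-23-1}) is reasonable, but without the Tashiro step there is no identity that propagates $Q=0$ from the critical points into the interior, so the proof is incomplete at its central step.
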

\begin{proof}
Let $p, q \in M$ be two points such that $f(p) = \min_M f$ and $f(q) = \max_M f$, respectively, and let $\bar M = M \setminus \{p, q\}$.
 Due to Theorem~\ref{thm2019-12-23-1} and Lemma~\ref{lemt1}, we can apply Tashiro's result \cite{tas} and can see that
  $(\bar M, \bar g)$  is conformally equivalent
to ${\Bbb S}^n\setminus \{\bar p, \bar q\}$, where $\bar p$ and $\bar q$ are the points in ${\Bbb S}^n$  corresponding to $p$ and $q$, respectively.  
In particular, by Theorem 1 in \cite{bgv}, 
the fiber space $(\Sigma, g|_{\Sigma})$ is a space of constant curvature.
Thus, 
$$
(\Sigma, g|_\Sigma) \equiv ({\Bbb S}^{n-1}, \, r \cdot g_{{\Bbb S}^{n-1}}),
$$
where $r>0$ is a positive constant and $g_{{\Bbb S}^{n-1}}$ is a round metric.

Now, replacing $\Sigma = f^{-1}(-1)$ by $\Sigma_t:= f^{-1}(t)$ in 
(\ref{eqn2019-8-27-1}), it can be easily concluded that the warped product metric $\bar g_t$ also
satisfies Lemma~\ref{lemt1}, and hence, the same argument mentioned above 
shows that,
for any level hypersurface $\Sigma_t:= f^{-1}(t)$,
$$
(\Sigma_t, g|_{\Sigma_t}) \equiv ({\Bbb S}^{n-1}, r(t) \cdot  g_{{\Bbb S}^{n-1}}).
$$
Therefore, the original metric $g$ can also be written as
\be
g= \frac {df}{|\nabla f|}\otimes \frac {df}{|\nabla f|}+ b(f)^2 g_\Sigma,\label{eqn2019-12-23-1}
\ee
where $b (f)>0$ is a positive function depending only on $f$. 
From (\ref{eqnt2}) and the following identity
$$ 
\frac 12 {\mathcal L}_{\nabla f}(b^2 g_\Sigma)=b\langle \nabla f, \nabla b\rangle g_\Sigma = b|\nabla f|^2 \frac {db}{df}g_\Sigma,
$$
we obtain
\be 
\frac 12 {\mathcal L}_{\nabla f}g=N(|\nabla f|)\frac {df}{|\nabla f|}\otimes \frac {df}{|\nabla f|}+b|\nabla f|^2 \frac{db}{df}g_\Sigma.\label{eqnt5-1}
\ee
On the contrary, from (\ref{cpe}) together with (\ref{eqn2020-5-7-3}) and (\ref{eqn2019-12-23-1}), we have
\bea
\lefteqn{\frac 12 {\mathcal L}_{\nabla f}g = Ddf =(1+f)z -\frac {sf}{n(n-1)}g}\\
&=& N(|\nabla f|) \frac {df}{|\nabla f|}\otimes \frac {df}{|\nabla f|} +(1+f)z-(1+f)\a \frac {df}{|\nabla f|}\otimes \frac {df}{|\nabla f|}  -\frac {sf}{n(n-1)}b^2 g_\Sigma.
\eea
Comparing this to (\ref{eqnt5-1}), we obtain
\be
\left( b|\nabla f|^2 \frac {db}{df} +\frac {sf}{n(n-1)}b^2\right) g_\Sigma=(1+f)\left(z-\a  \frac {df}{|\nabla f|}\otimes \frac {df}{|\nabla f|}\right).\label{eqnt7-1}
\ee
Now, let $\{E_1, E_2, \cdots, E_n\}$ be a local frame with $E_1=N$. Then, we have
$$ 
b|\nabla f|^2 \frac {db}{df}= (1+f)z(E_i, E_i)-\frac {sf}{n(n-1)}b^2
$$
for each $2\leq j\leq n$. Summing up these, we obtain
$$ 
(n-1)b|\nabla f|^2 \frac {db}{df}= -(1+f)\a -\frac {sf}nb^2.
$$
Substituting this into (\ref{eqnt7-1}), we get
\be
 -\frac {\a}{n-1} g_\Sigma= z-\a  \frac {df}{|\nabla f|}\otimes \frac {df}{|\nabla f|}. \label{eqn2020-8-29-1}
 \ee
Replacing $(\Sigma, g_\Sigma)$ by $(\Sigma_t , g_{\Sigma_t})$, we can see that the argument mentioned above is also valid. 
Thus, (\ref{eqn2020-8-29-1}) shows that, on each level hypersurface $f^{-1}(t)$, we have
$$ 
z(E_i, E_j)=-\frac {\a}{n-1}
$$
for $2\leq j\leq n$. Hence,
$$ 
|z|^2=\a^2+\frac {\a^2}{n-1}=\frac n{n-1}\a^2=\frac n{n-1}|i_Nz|^2,
$$
since $z(N, E_i)=0$ for $i\geq 2$. As a result, it follows from 
Lemma~\ref{lem2019-5-28-5} that $T=0.$

\end{proof}

\begin{rem}\label{rem2020-9-28-3}
{\rm 
Let $(g, f)$ be a non-trivial solution of the CPE with $z(\n f, X)  = 0$ for $X \perp \n f$. 
In Appendix, we show the following result. 
\be
\frac{s}{n(n-1)} g  = R_N +z +\frac{1+f}{|\n f|^2}i_{\n f}C- \left(\a - \frac{s}{n(n-1)}\right) \frac{df}{|df|}\otimes \frac{df}{|df|}.\label{eqn2020-9-28-1}
\ee
Here, $R_N$ is defined as follows 
$$
R_N(X, Y) = R(X, N, Y, N)
$$
for any vector field $X$ and $Y$. Let $\e:= \frac{s}{n(n-1)}$ and
$$
h:= R_N +\left(z- \a \frac{df}{|df|}\otimes \frac{df}{|df|}\right) +\frac{1+f}{|\n f|^2}i_{\n f}C.
$$
We can then rewrite the metric $g$ as
\be
g = \frac{1}{\e} h + \frac{df}{|df|}\otimes \frac{df}{|df|}.\label{eqn2020-9-28-10}
\ee
Then, the following can be proved on the set $f^{-1}(-1)$:
\be
{\mathcal L}_{\n f} \left(\frac{h}{|\n f|^2}\right) = 0\label{eqn2020-9-28-2}
\ee
Therefore, we can conclude that $g$ can be expressed as a warped product metric, and that it is,
in fact, equal to the metric $\bar g$ defined at the beginning of Section 4.
Refer to the Appendix for the detailed proofs of (\ref{eqn2020-9-28-1}) and (\ref{eqn2020-9-28-2}).

}
\end{rem}

Combining Theorem~\ref{thm2018-1-20-11} and Lemma~\ref{lem2019-6-22-1}, we obtain the following theorem.

\begin{thm}
Let $(g,f)$ be a nontrivial solution of the CPE on
 an $n$-dimensional compact manifold $M$ with $z(\n f, X) = 0$ for $X \perp \n f$. 
Then $M$ is isometric to a standard sphere ${\Bbb S}^n$.
\end{thm}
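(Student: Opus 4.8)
The plan is to obtain this theorem as the straightforward synthesis of the two structural results established above. The first step I would take is to invoke Lemma~\ref{lem2019-6-22-1}: since $(M^n, g, f)$ is a nontrivial solution of the CPE satisfying $z(\n f, X) = 0$ for every $X$ orthogonal to $\n f$, the $3$-tensor $T$ of (\ref{defnt}) vanishes identically on $M$. The second step is to feed this into Theorem~\ref{thm2018-1-20-11}, which shows that $T = 0$ forces the Cotton tensor $C = d^D z$ to vanish, so that $(M, g)$ has harmonic curvature; by the known cases of the Besse conjecture under the harmonic-curvature hypothesis (\cite{ych}, \cite{erra}), the metric $g$ is then Einstein, i.e.\ $z \equiv 0$.

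It then remains only to identify the Einstein manifold. With $z = 0$, equation (\ref{eqn2020-5-7-2}) reduces to $Ddf = -\frac{sf}{n(n-1)}\,g$, which is Obata's equation; the potential $f$ is nonconstant by the standing nontriviality assumption, and the constant scalar curvature $s$ is positive, as is standard for a nontrivial solution of the CPE. Obata's theorem \cite{Ob} then gives that $(M, g)$ is isometric to the round sphere $({\Bbb S}^n, g_0)$, which is the assertion. I would present this paragraph essentially verbatim, since it is the same deduction already used after Theorem~\ref{thm1} in the introduction.

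There is no genuine obstacle in this final assembly; the real difficulty has already been absorbed into Lemma~\ref{lem2019-6-22-1}. That proof is where the work lies: it uses Theorem~\ref{thm2019-12-23-1} to know that $f$ has exactly one maximum point and one minimum point, Lemma~\ref{lemt1} to see that $\n f$ is a conformal Killing field for the warped metric $\bar g$ of (\ref{eqn2019-8-27-1}), and then Tashiro's rigidity theorem \cite{tas} together with the classification of conformally--Einstein warped products \cite{bgv} to force every level hypersurface $f^{-1}(t)$ to be a round sphere; comparing the resulting warped-product form (\ref{eqn2019-12-23-1}) of $g$ against the CPE yields $z(E_i,E_j) = -\frac{\a}{n-1}\delta_{ij}$ on $f^{-1}(t)$, whence $|z|^2 = \frac{n}{n-1}|i_N z|^2$ and $T = 0$ by Lemma~\ref{lem2019-5-28-5}. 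Accordingly I would state and prove the present theorem as a one-line corollary of Theorem~\ref{thm2018-1-20-11} and Lemma~\ref{lem2019-6-22-1}, followed by the invocation of Obata's theorem.
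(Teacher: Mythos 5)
Your proposal is correct and matches the paper exactly: the theorem is obtained by combining Lemma~\ref{lem2019-6-22-1} (which gives $T=0$ under the hypothesis $z(\nabla f,X)=0$ for $X\perp\nabla f$) with Theorem~\ref{thm2018-1-20-11} (which shows $T=0$ forces harmonic curvature and hence Einstein), followed by Obata's theorem. The paper states it as precisely this one-line corollary, so there is nothing to add.
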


\section{CPE with positive isotropic curvature}

In this section, we will prove that if $(g, f)$ is a nontrivial solution of the CPE with positive isotropic curvature, then $M$ is isometric to a standard sphere. In the view of Theorem~\ref{thm2020-8-22-3}, 
it suffices to show that,
$$
z(\n f, X) = 0
$$
for any $X \perp \n f$. To do this, we introduce a $2$-form $\omega$ on $M$ defined as
$$
\omega := df \wedge i_{\nabla f}z
$$ 
by considering $i_{\nabla f} z$ as a $1$-form.

In this section, the dimension of the manifold $M$ is assumed to be $n \ge 4$.
First, we have the following.

 \begin{lem}\label{lem2018-4-30-20}
 We have
 \be
 \omega = (n-1) {\tilde i}_{\n f} T = -(1+f) {\tilde i}_{\n f}C.\label{keyeqn}
 \ee
 \end{lem}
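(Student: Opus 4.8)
The plan is to derive both equalities directly from the definitions of $T$ and $C$ and the already-established Lemma~\ref{lem2019-5-23-10}. First I would establish the left equality $\omega = (n-1)\tilde i_{\nabla f}T$. Recall
$$
T = \frac{1}{n-2}\, df\wedge z + \frac{1}{(n-1)(n-2)}\, i_{\nabla f}z \wedge g,
$$
so I need to compute $\tilde i_{\nabla f}T$, the interior product in the last slot. The term $df\wedge z$ contributes $\tilde i_{\nabla f}(df\wedge z)(X,Y) = df(X)z(Y,\nabla f) - df(Y)z(X,\nabla f) = (df\wedge i_{\nabla f}z)(X,Y) = \omega(X,Y)$. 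The term $i_{\nabla f}z\wedge g$ contributes, in the last slot, $\tilde i_{\nabla f}(i_{\nabla f}z\wedge g)(X,Y) = i_{\nabla f}z(X)\, g(Y,\nabla f) - i_{\nabla f}z(Y)\, g(X,\nabla f) = z(\nabla f,X)df(Y) - z(\nabla f,Y)df(X) = -\omega(X,Y)$. Hence
$$
\tilde i_{\nabla f}T = \frac{1}{n-2}\omega - \frac{1}{(n-1)(n-2)}\omega = \frac{(n-1) - 1}{(n-1)(n-2)}\omega = \frac{1}{n-1}\omega,
$$
which gives $\omega = (n-1)\tilde i_{\nabla f}T$.

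Next I would handle the right equality. Apply $\tilde i_{\nabla f}$ (interior product in the last factor) to the identity of Lemma~\ref{lem2019-5-23-10},
$$
(1+f)C = \tilde i_{\nabla f}\mathcal W - (n-1)T.
$$
Taking $\tilde i_{\nabla f}$ of the left side gives $(1+f)\tilde i_{\nabla f}C$. Taking it of the last term gives $-(n-1)\tilde i_{\nabla f}T = -\omega$ by the first part. The key point is that $\tilde i_{\nabla f}$ applied to $\tilde i_{\nabla f}\mathcal W$ vanishes: this term is $\mathcal W(Y,Z,U,\nabla f)$ contracted again in the $U$-slot against $\nabla f$, i.e. $\mathcal W(Y,Z,\nabla f,\nabla f)$, which is zero by the antisymmetry of the Weyl tensor in its last two arguments. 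Therefore $(1+f)\tilde i_{\nabla f}C = -\omega$, that is $\omega = -(1+f)\tilde i_{\nabla f}C$, after recording that the conventions here identify $\tilde i_{\nabla f}C(X,Y) = C(X,Y,\nabla f)$ consistently with the definition given just before the lemma.

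The argument is essentially bookkeeping with interior products and sign conventions, so I do not anticipate a genuine obstacle; the one place to be careful is keeping the slot in which each interior product acts straight — $i_{\nabla f}$ acts on the first factor while $\tilde i_{\nabla f}$ acts on the last — and making sure the wedge-product conventions for $df\wedge z$ and $i_{\nabla f}z\wedge g$ (a $1$-form wedge a symmetric $2$-tensor, yielding a $3$-tensor antisymmetric in the first two slots) are applied uniformly, since the cancellation producing the factor $\frac{1}{n-1}$ depends on these signs. I would also note in passing that this lemma shows $\omega$ is controlled simultaneously by $T$ and by $C$, which is exactly the link that will let the PIC hypothesis force $\omega = 0$ and hence $z(\nabla f,X)=0$ for $X\perp\nabla f$.
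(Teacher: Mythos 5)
Your proof is correct and follows essentially the same route as the paper: compute $\tilde i_{\nabla f}T$ directly from the definition of $T$ to get $\frac{1}{n-1}\omega$, then contract the identity $(1+f)C = \tilde i_{\nabla f}\mathcal W - (n-1)T$ of Lemma~\ref{lem2019-5-23-10} with $\nabla f$ in the last slot, the Weyl term dying by antisymmetry. The paper's proof is merely terser, leaving the wedge-product bookkeeping and the vanishing of $\mathcal W(\cdot,\cdot,\nabla f,\nabla f)$ implicit.
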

 \begin{proof}
 As in the proof of Lemma~\ref{Cotz}, it follows from the  definition of $T$ that
 $$
 (n-2) T(X. Y, \nabla f)
=\frac {n-2}{n-1}\, df\wedge i_{\nabla f} z(X,Y)=\frac {n-2}{n-1}\, \omega (X,Y)
$$
for vectors $X$ and $Y$. The second equality follows from
Lemma~\ref{lem2019-5-23-10}.
 \end{proof}

 \begin{lem}\label{simple}
 Let  $\{E_1, E_2,  \cdots, E_n\}$ be a local frame with $E_1=N = \frac{\n f}{|\n f|}$.
Then
$$
\omega =0 \quad \mbox{if and only if}\quad \tilde{i}_{\nabla f}C(N, E_j)=0\quad (j \ge 2).
$$
\end{lem}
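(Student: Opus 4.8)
The plan is to compute both $\omega$ and $\tilde{i}_{\n f}C$ in the frame $\{N,E_2,\dots,E_n\}$ and compare their components, using the identity $\omega=-(1+f)\,\tilde{i}_{\n f}C$ from Lemma~\ref{lem2018-4-30-20} as the bridge between the two.

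First I would record the components of $\omega=df\wedge i_{\n f}z$. Since $df(E_j)=\langle\n f,E_j\rangle=0$ for $j\ge 2$, one has $\omega(E_i,E_j)=df(E_i)\,i_{\n f}z(E_j)-df(E_j)\,i_{\n f}z(E_i)=0$ whenever $i,j\ge 2$, whereas $\omega(N,E_j)=|\n f|\,z(\n f,E_j)$ for $j\ge 2$. Hence $\omega$ is completely determined by the functions $\omega(N,E_j)$, $j\ge 2$: it vanishes on the domain of the frame if and only if $\omega(N,E_j)=0$ for all $j\ge 2$. Evaluating the identity of Lemma~\ref{lem2018-4-30-20} on $(N,E_j)$ gives $\omega(N,E_j)=-(1+f)\,\tilde{i}_{\n f}C(N,E_j)$, so $\omega=0$ if and only if $(1+f)\,\tilde{i}_{\n f}C(N,E_j)=0$ for every $j\ge 2$.

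It then remains to cancel the factor $1+f$. On the domain of the frame one has $\n f\ne 0$, so each $\tilde{i}_{\n f}C(N,E_j)=|\n f|^{-1}C(\n f,E_j,\n f)$ is smooth, and $f^{-1}(-1)$ has empty interior there --- otherwise $f$ would be locally constant and hence, by the analyticity of solutions of the CPE, globally constant, contradicting nontriviality of $(g,f)$. Therefore $(1+f)\,\tilde{i}_{\n f}C(N,E_j)$ vanishes identically exactly when $\tilde{i}_{\n f}C(N,E_j)$ does, and since $\omega$ vanishes trivially at critical points of $f$ this yields the stated equivalence. The only step that is not a direct component computation is this last one, handling the degeneracy of $1+f$ along the level set $f=-1$; everything else follows at once from Lemma~\ref{lem2018-4-30-20} and the vanishing of $df$ in directions tangent to the level sets of $f$.
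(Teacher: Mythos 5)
Your proof is correct and follows essentially the same route as the paper: both reduce to the observation that $\omega(E_j,E_k)=0$ automatically for $j,k\ge 2$, so $\omega$ is determined by the components $\omega(N,E_j)$, which Lemma~\ref{lem2018-4-30-20} identifies with $-(1+f)\,\tilde{i}_{\nabla f}C(N,E_j)$. Your extra step justifying the cancellation of the factor $1+f$ (via the empty interior of $f^{-1}(-1)$ on the domain of the frame) is a point the paper's proof passes over silently, and it is a legitimate refinement rather than a divergence in method.
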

\begin{proof}
It follows from the definition of $\omega$ that
\be
\omega (E_j, E_k)=0\quad\mbox{for all $j, k \ge 2$},\label{eqn2019-5-26-1}
\ee
which shows, by Lemma~\ref{lem2018-4-30-20}, 
\bea
\tilde{i}_{\nabla f}C(E_j, E_k)=0\label{simple-2}
\eea
for $2\leq j,k\leq n$.  Therefore, it is easy to see that
$$
\omega =0 \quad \mbox{if and only if}\quad \tilde{i}_{\nabla f}C(N, E_j)=0
$$
for $2\leq j\leq n$.
\end{proof}

\vspace{.12in}
Next, we prove that $\omega$ is closed, and when $(M, g)$
has positive isotropic curvature,  $\omega$ is vanishing.

\begin{lem} \label{lem191} 
As a $2$-form, we have the following 
$$  
\tilde{i}_{\nabla f}C= di_{\nabla f}z.
$$
\end{lem}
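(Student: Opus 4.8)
The plan is to compute the exterior derivative of the $1$-form $i_{\nabla f}z$ directly and identify it with the $2$-tensor $\tilde i_{\nabla f}C$. Recall that $C = d^D z$ in the CPE (since $s$ is constant), so $C(X,Y,Z) = D_Xz(Y,Z) - D_Yz(X,Z)$; contracting the last slot against $\nabla f$ gives $\tilde i_{\nabla f}C(X,Y) = C(X,Y,\nabla f) = D_Xz(Y,\nabla f) - D_Yz(X,\nabla f)$. On the other side, for the $1$-form $\eta := i_{\nabla f}z$ we have $\eta(Y) = z(\nabla f, Y)$, so the standard formula $d\eta(X,Y) = X(\eta(Y)) - Y(\eta(X)) - \eta([X,Y])$ applies. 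I would expand $X(z(\nabla f,Y))$ using metric-compatibility of $D$ as $D_Xz(\nabla f,Y) + z(D_X\nabla f, Y) + z(\nabla f, D_XY)$, and similarly for the $Y$-term.

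The key steps, in order: first, write out $d\eta(X,Y)$ via the formula above; second, substitute the Leibniz expansions of $X(z(\nabla f,Y))$ and $Y(z(\nabla f,X))$; third, observe that the terms $z(\nabla f, D_XY) - z(\nabla f, D_YX) - z(\nabla f, [X,Y])$ cancel identically because $D$ is torsion-free, i.e. $D_XY - D_YX = [X,Y]$; fourth, use that $D_X\nabla f = (Ddf)^\sharp(X)$ is symmetric in the appropriate sense, so the terms $z((Ddf)^\sharp(X), Y) - z((Ddf)^\sharp(Y), X)$ become $Ddf(X, \cdot)$ contracted with $z(\cdot, Y)$ minus its $X\leftrightarrow Y$ swap — and here one invokes the CPE itself, $Ddf = (1+f)z - \frac{sf}{n(n-1)}g$, to see that this piece is $(1+f)\big(z\circ z(X,Y) - z\circ z(Y,X)\big) - \frac{sf}{n(n-1)}\big(z(X,Y)-z(Y,X)\big)$, both of which vanish by symmetry of $z$ and of $z\circ z$. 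What survives is exactly $D_Xz(\nabla f,Y) - D_Yz(\nabla f,X)$, and since $z$ is symmetric this equals $D_Xz(Y,\nabla f) - D_Yz(X,\nabla f) = C(X,Y,\nabla f) = \tilde i_{\nabla f}C(X,Y)$.

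The main obstacle — really the only place care is needed — is bookkeeping the covariant-derivative-versus-directional-derivative terms so that the torsion-free cancellation and the symmetry cancellations are transparent; working at a point with a geodesic frame $\{E_i\}$ normal at $p$ (so $D_{E_i}E_j = 0$ at $p$ and $[E_i,E_j] = 0$ at $p$) reduces everything to $d\eta(E_i,E_j) = E_i(z(\nabla f, E_j)) - E_j(z(\nabla f, E_i)) = D_{E_i}z(\nabla f,E_j) + z(D_{E_i}\nabla f, E_j) - (i\leftrightarrow j)$, and then the $z(D\nabla f, \cdot)$ terms drop by the CPE-plus-symmetry argument above. This is the cleanest route and avoids any appeal to the second Bianchi identity.
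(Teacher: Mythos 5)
Your proposal is correct and follows essentially the same route as the paper: expand $d(i_{\nabla f}z)$, so that the Hessian terms $Ddf(X,\cdot)\lrcorner z$ appear, substitute the CPE to see they cancel by symmetry of $z$ and $z\circ z$, and identify the surviving covariant-derivative terms with $C(X,Y,\nabla f)=\tilde i_{\nabla f}C(X,Y)$. The paper simply carries this out in a normal coframe rather than invariantly.
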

\begin{proof} 
Choose a local frame $\{E_i\}$ which is normal at a point $p \in M$, 
and let $\{\theta ^i\}$ be its dual coframe so that $d\theta^i\vert_{p}=0$.  
Since $i_{\nabla f}z=\sum_{l,k=1}^n f_l z_{lk}\theta ^k$ with $E_l(f) = f_l$ and
$z(E_l, E_k) = z_{lk}$, by (\ref{eqn2020-5-7-2}), we have
\bea
di_{\n f}z &=& 
\sum_{j,k} \sum_{l} (f_{lj} z_{lk} + f_l z_{lk;j}) \theta^j \wedge \theta^k \\
&=&
\sum_{j<k} \sum_{l} \left\{(f_{lj} z_{lk} -f_{lk} z_{lj}) +  f_l (z_{lk;j} - z_{lj;k})\right\} 
\theta^j \wedge \theta^k \\
&=&
\sum_{j<k} \sum_{l} \left[\left\{\left( (1+f)z_{lj} -\frac{sf\,\delta_{lj} }{n(n-1)}\right) z_{lk}
- \left( (1+f)z_{lk} -\frac{sf\,\delta_{lk}}{n(n-1)} \right) z_{lj}\right\}  \right] \theta^j \wedge \theta^k \\
& &+ \sum_{j<k} \sum_{l} f_l C_{jkl}  \theta^j \wedge \theta^k\\
&=&
 \sum_{j<k} \sum_{l} f_l C_{jkl}  \theta^j \wedge \theta^k =
 {\tilde i}_{\n f}C.
 \eea
\end{proof}

\begin{lem}\label{closedform}
$\o$ is a closed $2$-form, i.e.,
$d\o = 0$.
\end{lem}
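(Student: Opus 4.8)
The plan is to show $d\omega = 0$ by combining the two identities already established: Lemma~\ref{lem2018-4-30-20}, which gives $\omega = -(1+f)\,\tilde{i}_{\nabla f}C$, and Lemma~\ref{lem191}, which gives $\tilde{i}_{\nabla f}C = d\,i_{\nabla f}z$. Since $\omega = df \wedge i_{\nabla f}z$ by definition, these together already express $\omega$ in two ways that interact nicely with the exterior derivative, so the computation should be short.

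First I would compute $d\omega$ directly from $\omega = df \wedge i_{\nabla f}z$. Using $d(df) = 0$ and the Leibniz rule for forms, $d\omega = d(df \wedge i_{\nabla f}z) = -df \wedge d(i_{\nabla f}z)$. Now substitute Lemma~\ref{lem191}: $d(i_{\nabla f}z) = \tilde{i}_{\nabla f}C$. Hence $d\omega = -df \wedge \tilde{i}_{\nabla f}C$. So it remains to check that $df \wedge \tilde{i}_{\nabla f}C = 0$ as a $3$-form.

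The vanishing of $df \wedge \tilde{i}_{\nabla f}C$ should follow from the structural properties of the Cotton tensor. Recall $\tilde{i}_{\nabla f}C(X,Y) = C(X, Y, \nabla f)$, and evaluating the $3$-form $df \wedge \tilde{i}_{\nabla f}C$ on a triple $(X,Y,Z)$ produces a cyclic combination of terms of the form $df(X)\,C(Y,Z,\nabla f)$. Using that $C$ is skew in its first two slots, that the cyclic sum of $C$ in its three arguments vanishes, and that $C(\nabla f, \cdot, \nabla f) = 0$ (from skew-symmetry in the first two slots), the cyclic combination collapses to zero; the cleanest check is to evaluate on a basis adapted to $N = \nabla f/|\nabla f|$ and note that the only potentially nonzero components of $df \wedge \tilde{i}_{\nabla f}C$ are those involving $N$, which vanish because $C(\cdot,\cdot,\nabla f)$ paired with $df = |\nabla f|\,\theta^1$ forces a repeated $N$ argument inside $C$. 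Alternatively, one may invoke Lemma~\ref{lem2018-4-30-20} once more: since $\omega = (n-1)\tilde{i}_{\nabla f}T$ and $T(X,X,\cdot) = 0$, the $1$-form $df$ wedged against $\tilde{i}_{\nabla f}T$ inherits the same degeneracy.

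The main obstacle, such as it is, is bookkeeping: making sure the cyclic/skew-symmetry cancellations in $df \wedge \tilde{i}_{\nabla f}C$ are handled correctly rather than hand-waved, and confirming that no curvature term survives. This is genuinely routine given the properties of $C$ recalled in Section~2 (cyclic sum zero, skew in the first two indices), so I expect the proof to be only a few lines once $d\omega = -df \wedge \tilde{i}_{\nabla f}C$ is written down.
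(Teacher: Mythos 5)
Your overall route coincides with the paper's: compute $d\omega = -\,df\wedge d(i_{\nabla f}z) = -\,df\wedge\tilde i_{\nabla f}C$ via Lemma~\ref{lem191}, then show the remaining $3$-form vanishes. The conclusion is correct, but your primary justification of the last step does not work. The intrinsic symmetries of the Cotton tensor (skew-symmetry in the first two slots, vanishing cyclic sum) do \emph{not} imply $df\wedge\tilde i_{\nabla f}C=0$: in a frame with $E_1=N$ one has $(df\wedge\tilde i_{\nabla f}C)(E_1,E_j,E_k)=|\nabla f|\,C(E_j,E_k,\nabla f)$ for $j,k\ge 2$, and this surviving term contains no repeated argument, so neither the ``cyclic/skew collapse'' nor the ``repeated $N$ argument inside $C$'' reasoning applies to it. (Also, $C(\nabla f,\cdot,\nabla f)=0$ is not a consequence of skew-symmetry in the first two slots.) What actually kills this component is Lemma~\ref{simple}, namely $\tilde i_{\nabla f}C(E_j,E_k)=0$ for $j,k\ge 2$, which comes from the structural identity $\omega=-(1+f)\,\tilde i_{\nabla f}C$ of Lemma~\ref{lem2018-4-30-20} together with $\omega(E_j,E_k)=0$ --- i.e.\ from the CPE, not from general properties of $C$.

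Your ``alternative'' remark is the correct mechanism and should be promoted to the main argument: since $(n-1)\tilde i_{\nabla f}T=\omega=df\wedge i_{\nabla f}z$ already carries a factor of $df$, wedging with $df$ annihilates it, whence $(1+f)\,df\wedge\tilde i_{\nabla f}C=-\,df\wedge\omega=0$ and so $df\wedge\tilde i_{\nabla f}C=0$ off $f^{-1}(-1)$ and, by continuity, everywhere. With that substitution the proof is complete and is essentially the paper's, which writes $d\,i_{\nabla f}z=|\nabla f|\sum_{k\ge2}C_{1k1}\,\theta^1\wedge\theta^k$ using Lemma~\ref{simple} and then wedges with $df=|\nabla f|\,\theta^1$.
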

\begin{proof}
Choose a local frame $\{E_i\}$ with $E_1 = N = {\n f}/{|\n f|}$, and let $\{\theta^i\}$ be its dual coframe. 
Then, by Lemma~\ref{simple} and Lemma~\ref{lem191} 
$$
di_{\n f}z = 
\sum_{j<k} \sum_{l} f_l C_{jkl}  \theta^j \wedge \theta^k 
= \sum_{j<k}  |\n f| C_{jk1}  \theta^j \wedge \theta^k 
=
 |\n f| \sum_{k=2}^n   C_{1k1}  \theta^1 \wedge \theta^k. 
$$
Thus, by taking the exterior derivative of $\omega$ in (\ref{keyeqn}), we have
$$
d\o = - df \wedge d i_{\n f}z = - |\n f| \theta^1 \wedge
\left( |\n f| \sum_{k=2}^n   C_{1k1}  \theta^1 \wedge \theta^k\right) =0.
$$
\end{proof}

Now, let $\Omega =\{p\in M \, \vert\, \omega_p \neq 0$ on $T_pM\}$. Then, $\Omega$ is an open subset of $M$. 
We start with the following observation.

\begin{lem} \label{compr2} 
Suppose that $\omega_p\neq 0$ at $p\in M$. Then, 
\be
|D\omega |^2(p)\geq |\delta \omega|^2(p).\label{eqn2019-12-25-1}
\ee
\end{lem}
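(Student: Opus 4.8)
The plan is to exploit the fact that $\omega$ is a closed $2$-form (Lemma~\ref{closedform}) together with its very special algebraic structure: at a point $p$ where $\omega_p\neq 0$, the $2$-form $\omega = df\wedge i_{\nabla f}z$ is \emph{decomposable} (a wedge of two $1$-forms), and by Lemma~\ref{simple} the covector $i_{\nabla f}z$ may be taken, after subtracting its $df$-component, to be orthogonal to $df$; hence in a suitable local orthonormal coframe $\{\theta^i\}$ with $\theta^1 = df/|\nabla f|$ one can write $\omega = \lambda\,\theta^1\wedge\theta^2$ for a single nonzero function $\lambda$ and a single direction $\theta^2$. So the heart of the matter is a pointwise linear-algebra inequality for the full covariant derivative versus the codifferential of such a rank-one $2$-form, using only that $d\omega=0$.

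First I would set up the local frame $\{E_1=N,E_2,\dots,E_n\}$ adapted so that $\omega = \lambda\,\theta^1\wedge\theta^2$ at $p$ (possible since $\omega_p$ is decomposable and $\tilde i_{\nabla f}C(E_j,E_k)=0$ for $j,k\ge 2$ by Lemma~\ref{simple}, so only the $\theta^1\wedge\theta^k$ components survive, and one rotates $E_2,\dots,E_n$ to align $\theta^2$ with that combination). Then I would expand $D\omega$ in this frame: writing $\omega_{ij}$ for the components, $(D\omega)_{kij} = E_k(\omega_{ij}) + \text{(connection terms)}$, and similarly $(\delta\omega)_j = -\sum_i (D\omega)_{iij}$. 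The key structural input is that $d\omega = 0$ forces the skew-symmetrization $(D\omega)_{kij} + (D\omega)_{ijk} + (D\omega)_{jki} = 0$ (the Bianchi-type identity for a closed form); this ties together the "off-diagonal" derivative components. Because $\omega$ has only the components $\omega_{1k} = -\omega_{k1}$ nonzero at $p$ (in fact only $\omega_{12}\neq0$ there, though one must keep all $\omega_{1k}$ since they may be nonzero in a neighborhood and their derivatives enter), the computation of $|D\omega|^2$ and $|\delta\omega|^2$ reduces to a finite sum that I would estimate term by term.

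The main obstacle — and the step I expect to take real care with — is organizing the index bookkeeping so that the closedness relation $d\omega=0$ is used efficiently: one wants to show $|\delta\omega|^2 = \sum_j\big(\sum_i (D\omega)_{iij}\big)^2 \le \big(\text{number of terms}\big)\cdot\sum_{i,j}(D\omega)_{iij}^2 \le |D\omega|^2$, but the naive Cauchy--Schwarz overcounts, so one must instead use the fact that for a \emph{closed decomposable} $2$-form many of the components $(D\omega)_{kij}$ either vanish or get identified via Bianchi, cutting the effective count down to exactly what is needed. Concretely I would: (i) note $(D\omega)_{iij}$ contributes to $\delta\omega$ only through $i=1$ and $i=2$ (the directions appearing in $\omega$ at $p$), since for $i\ge 3$ the component $\omega_{i\,\cdot}$ vanishes at $p$ and the connection terms are controlled; (ii) use $d\omega=0$ to rewrite $(D\omega)_{1\,1\,j}$ and $(D\omega)_{2\,2\,j}$ in terms of components $(D\omega)_{j\,1\,2}$ that also appear (with unit coefficient and disjointly) inside $|D\omega|^2$; (iii) conclude by an elementary inequality $(a+b)^2 \le 2a^2+2b^2 \le$ (the relevant piece of) $|D\omega|^2$, checking that no term of $|D\omega|^2$ is used twice. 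I would present this as a short computation in the adapted frame, remarking that all connection-term contributions are handled by choosing $\{E_i\}$ geodesic at $p$ so that $(D\omega)_{kij} = E_k(\omega_{ij})$ at $p$, which removes every Christoffel symbol and leaves a purely combinatorial comparison of partial derivatives of the components of a closed rank-one $2$-form.
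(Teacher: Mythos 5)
There is a genuine gap, and it sits exactly where you flag the difficulty. First, closedness does not do the work you assign to it in step (ii): $d\omega=0$ reads $D_k\omega_{ij}+D_i\omega_{jk}+D_j\omega_{ki}=0$, but every component entering $\delta\omega$ is of the form $D_i\omega_{ij}$ with a repeated index, and for such triples the cyclic identity collapses to $0=0$; it therefore gives no relation whatsoever between the divergence components and the remaining components of $D\omega$. Second, step (i) is false: in a geodesic frame at $p$ the components $\omega_{ij}$ with $i\ge 3$ vanish only at the single point $p$, so their first derivatives $E_i(\omega_{ij})(p)$ --- which is exactly what $(\delta\omega)_j(p)=-\sum_i E_i(\omega_{ij})(p)$ sees --- need not vanish; vanishing of a function at a point says nothing about its gradient there. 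Third, the constant-$1$ inequality is simply not a consequence of ``closed and decomposable'': on flat $\mathbb{R}^3$ take $\omega=(1+y)\,dx\wedge dy+z\,dx\wedge dz=dx\wedge\bigl((1+y)\,dy+z\,dz\bigr)$, which is closed, decomposable, and nonzero at the origin; there $\delta\omega=2\,dx$, so $|\delta\omega|^2=4$, while $\sum_k\sum_{i<j}(D_k\omega_{ij})^2=2$ (and even the full tensor-norm convention only yields $4$, leaving no room for the factor-of-$2$ loss in your step (iii)). So no pointwise combinatorial argument from closedness plus decomposability at a point can succeed.

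What the paper uses, and what you need, is stronger than the normal form at $p$: following Zhu's Lemma 2.2 one chooses a frame in which $\omega=u\,\theta^1\wedge\theta^2$ with every other component vanishing \emph{identically in a neighborhood} of $p$ (available here because $\omega(E_j,E_k)=0$ for $j,k\ge 2$ holds identically, not just at $p$, and $E_2$ can be taken along the projection of $i_{\nabla f}z$ onto $N^{\perp}$). In such a frame $\delta\omega$ has exactly the two components $E_2(u)$ and $-E_1(u)$, each of which coincides with a single, distinct component of $D\omega$, namely $D_{E_2}\omega(E_1,E_2)$ and $D_{E_1}\omega(E_1,E_2)$, and the inequality follows with constant $1$ and no Cauchy--Schwarz. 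Such a frame cannot in general also be geodesic at $p$ (the $2$-plane spanned by $\omega$ rotates), so the Christoffel terms are not zero; they are eliminated instead by the identical vanishing of $\omega(E_i,E_j)$ for $\{i,j\}\neq\{1,2\}$, which is the computation the paper carries out. You should replace steps (i)--(iii) by this argument.
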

\begin{proof}
First of all, since $\omega_p\neq 0$, we have $df_p\neq 0$ and $f(p)\neq -1$
by definition of $\omega$ and Lemma~\ref{lem2018-4-30-20}. 
Define $A:T_pM\to T_pM$ by $ g(Au, v)=\omega (u, v)$ for any $u, v\in T_pM$.

\vskip .5pc
\noindent
{\bf Assertion 1}: $\nabla f(p)\notin \ker A$.

 Let $\{e_1, e_2, \cdots,  e_n\}$ be an orthonormal  basis on $T_pM$ with 
$e_1=N(p)$. If $\nabla f(p)\in \ker A$, then, 
$$
0=\langle Ae_1, e_j\rangle =\omega(e_1, e_j)=-(1+f)\, \tilde{i}_{\nabla f}C(e_1, e_j)
$$
for $j\geq 2$. This implies that $\omega_p=0$ from Lemma~\ref{simple}, a contradiction. 

\vskip .5pc
\noindent
{\bf Assertion 2}: $\ker A\subset (\nabla f)^{\bot}$.

 Let $u\in \ker A$ so that $\langle Au, v\rangle =0$ for any $v\in T_pM$. 
 Let $\{e_1, e_2, \cdots, e_n\}$ be an orthonormal basis on $T_pM$ with $e_1=N(p)$.
 Then, by Lemma~\ref{lem2018-4-30-20} together with (\ref{eqn2019-5-26-1}), we have
\bea
0&=& \langle Au, e_k\rangle = \omega(u, e_k)= -(1+f)\, \tilde{i}_{\nabla f}C(u, e_k)\\
&=& -(1+f) \, \sum_{j=1}^n \langle u, e_j\rangle \, \tilde{i}_{\nabla f}C(e_j, e_k)= -(1+f) \,  \langle u, e_1\rangle \, \tilde{i}_{\nabla f}C(e_1, e_k)
\eea
for any $1\leq k\leq n$. Since $\o_p \ne 0$,  we have
$\tilde{i}_{\nabla f}C(e_1, e_k) \ne 0$  for some  $k\ge 2$ by  Lemma~\ref{simple}.
So, $\langle u, e_1\rangle =0$, which implies that $\n f(p) \in (\ker A)^\perp$.

\vskip .5pc
\noindent
{\bf Assertion 3}: Let $\{e_1,... , e_n\}$ be an orthonormal  basis on $T_pM$ 
with $e_1=N(p)$. Then,
$$
||Ae_1||=\sup_{\substack{u\in (\ker A)^{\bot}\\ ||u||=1}} ||Au||.
$$
First, by {\bf Assertion 1} and {\bf Assertion 2}, 
$Ae_1 \ne 0$ and $e_1 \in (\ker A)^{\bot}$. 
Since  $ g(Ae_1, e_1) = \omega(e_1, e_1)=0$, $Ae_1$ is orthogonal to $e_1$, we may assume that $e_2= {Ae_1}/{||Ae_1||}$. 
In particular, we have $e_2 \in (\ker A)^\perp$ by the skew-symmetry of $A$.

Le $u\in (\ker A)^{\bot}$ with $||u||=1$.
Since $ \langle Ae_i, e_j\rangle =\omega (e_i, e_j)=0$ for $i,j\geq 2$
by (\ref{eqn2019-5-26-1}), we have
\bea
Au&=& \sum_{j=1}^n\langle Au, e_j\rangle e_j=\sum_{j=1}^n \sum_{i=1}^n \langle u, e_i\rangle \, \langle Ae_i, e_j\rangle e_j\\
&=& \langle u, e_1\rangle Ae_1 -\sum_{i=1}^n\langle u,e_i\rangle \langle e_i, Ae_1\rangle e_1\\
&=& 
\langle u, e_1\rangle Ae_1 - \langle u, Ae_1\rangle e_1\\
&=&
||Ae_1||\langle u, e_1\rangle e_2 - ||Ae_1||\langle u, e_2\rangle e_1.
\eea
Since $||u||^2= \sum_j \langle u, e_j\rangle^2=1,$ we have
$$ 
||Au||^2= ||Ae_1||^2 (\langle u, e_1\rangle ^2 +\langle u, e_2\rangle^2 ) \leq ||Ae_1||^2
$$
and hence {\bf Assertion 3} is satisfied.

Note that, for $u=e_j$ with $j\geq 2$, we also have
\bea
Ae_j = -\langle e_j, Ae_1\rangle e_1 \quad\mbox{and so}\quad
 ||Ae_j ||\leq ||Ae_1||.\label{compr}
\eea

\vspace{.102in}
Now, let us show  the inequality (\ref{eqn2019-12-25-1}).
Applying the argument in the proof of Lemma 2.2 from \cite{pz}, we may conclude
 that there exists a local frame $\{E_1,\cdots, E_{2m}, \cdots, E_n\}$  around a point $p \in M$ such that
$$
\omega= \sum_{i=1}^m a_i \theta^{2i-1} \wedge \theta^{2i},
$$
where $\{ \theta^1,..,\theta^n\}$ is the dual coframe of $\{E_i\}_{i=1}^n$. 
In particular, since $\nabla f(p)\notin \ker A$, we may choose 
$E_1=N=\nabla f/||\nabla f||$. 
Since $\omega(E_j,E_k)= 0$ for $j,k\geq 2$ by Lemma~\ref{simple},  we have 
$$
\omega= u \, \theta^1 \wedge \theta^2
$$
for a local smooth function $u$.
Thus, it is easy to see (cf. \cite{pl1}, p.25) that
$$\delta \omega=E_2(u)\theta^1 -E_1(u)\theta^2$$
and so,
$$|\delta \omega|^2 =(E_1(u))^2+(E_2(u))^2.$$
On the contrary, from $\omega= df \wedge i_{\nabla f}z
= u\theta^1 \wedge \theta^2$, we have
$$
\omega (E_1, E_j)= |\nabla f|z(\nabla f, E_j)=0\quad (j \ge 3).
$$
Thus, 
\bea
D_{E_1}\omega (E_1,E_2)&=&E_1(\omega(E_1,E_2))-\omega(D_{E_1}E_1, E_2)- \omega(E_1, D_{E_1}E_2)\\
&=&
E_1(u) -\sum_{j=3}^n\langle D_{E_1}E_1, E_j\rangle\, \omega (E_j,E_2)
- \sum_{j=3}^n \langle D_{E_1}E_2, E_j\rangle\, \omega(E_1, E_j)\\
&=& E_1(u). 
\eea
Similarly, since $D_{E_1}\omega (E_1,E_2) = E_2(u)$,  we may conclude that
$$
|\delta \omega|^2=(E_1(u))^2+(E_2(u))^2 =|D_{E_1}\omega(E_1,E_2)|^2
+|D_{E_2}\omega (E_1,E_2)|^2\leq |D\omega|^2. 
$$
\end{proof}

Using Lemma~\ref{compr2} and Bochner-Weitzenb\"ock formula for $2$-forms, we can prove
 the following structural property for the CPE with positive isotropic curvature.

\begin{thm}\label{thm2018-5-16-2}
Let $(g,f)$ be  a nontrivial solution of (\ref{cpe}) on a compact manifold $M$ of dimension $n \ge 4$. 
If $(M,g)$ has positive isotropic curvature, then the $2$-form $\o = df\wedge i_{\n f}z$ is vanishing.
\end{thm}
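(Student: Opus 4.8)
The plan is to integrate a Bochner--Weitzenb\"ock--type identity for the closed $2$-form $\omega = df\wedge i_{\n f}z$ against a suitable weight and show that positive isotropic curvature forces $\omega$ to vanish. Since $\omega$ is closed by Lemma~\ref{closedform}, we have $\Delta\omega = -d\delta\omega + \delta d\omega = -d\delta\omega$, so the Hodge Laplacian is controlled by $\delta\omega$ alone. The Weitzenb\"ock formula for $2$-forms reads $\Delta\omega = D^*D\omega + \mathcal{W}(\omega)$, where the curvature term $\mathcal{W}(\omega)$ involves the Ricci and full curvature operators acting on $2$-forms. The key classical fact I would invoke is that, on the open set $\Omega$ where $\omega_p\neq 0$, the $2$-form $\omega$ is \emph{decomposable} (indeed $\omega = u\,\theta^1\wedge\theta^2$ in the frame constructed in Lemma~\ref{compr2}), and for a decomposable $2$-form the Weitzenb\"ock curvature term is exactly a complex sectional curvature of the type controlled by PIC --- more precisely, one pairs $\langle\mathcal{W}(\omega),\omega\rangle$ against the isotropic $2$-plane spanned by $E_1\mp iE_2$ inside a complexified tangent space, after possibly adjoining two more orthonormal directions to form a genuinely isotropic plane (this is why $n\ge 4$ is needed). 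The PIC hypothesis then makes this curvature contribution strictly positive wherever $\omega\neq 0$.

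The main steps, in order, would be: (1) Restrict attention to $\Omega = \{p : \omega_p\neq 0\}$ and use the normal-form $\omega = u\,\theta^1\wedge\theta^2$ from Lemma~\ref{compr2}, where $E_1 = N = \n f/|\n f|$. (2) Write down the Weitzenb\"ock identity for $\omega$ and integrate $\frac12\Delta|\omega|^2 = \langle\Delta\omega,\omega\rangle - |D\omega|^2$ over $M$; since $M$ is closed, $\int_M \Delta|\omega|^2 = 0$, giving $\int_M\left(|D\omega|^2 - |\delta\omega|^2 + \langle\mathcal{W}(\omega),\omega\rangle\right) = 0$ after using $\Delta\omega = -d\delta\omega$ and integrating by parts to convert $\langle d\delta\omega,\omega\rangle$ into $|\delta\omega|^2$. (3) Apply Lemma~\ref{compr2}: pointwise on $\Omega$ we have $|D\omega|^2 \ge |\delta\omega|^2$, so the integrand reduces to something $\ge \langle\mathcal{W}(\omega),\omega\rangle$. (4) Identify $\langle\mathcal{W}(\omega),\omega\rangle$ with (a positive multiple of) $|\omega|^2$ times a complex sectional curvature on a totally isotropic $2$-plane built from $E_1, E_2$ and two auxiliary orthonormal vectors $E_3, E_4$; by PIC this is strictly positive on $\Omega$. (5) Conclude $\int_\Omega (\text{strictly positive}) \le 0$, forcing $\Omega = \emptyset$, i.e. $\omega \equiv 0$ on $M$.

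The delicate point --- and the one I expect to be the main obstacle --- is step (4): showing that the Weitzenb\"ock curvature operator $\mathcal{W}$ evaluated on a \emph{decomposable} $2$-form $\theta^1\wedge\theta^2$ is genuinely dominated below by the isotropic curvature, rather than by arbitrary sectional curvature (which PIC does not control). The standard trick is to complexify: set $Z = E_1 - iE_2$ and $W = E_3 - iE_4$ for some orthonormal $E_3, E_4$ perpendicular to $E_1, E_2$; then $\{Z,W\}$ spans a totally isotropic $2$-plane, and $\mathrm{K}_{\Bbb C}$ of this plane expands as a sum $R_{1313}+R_{1414}+R_{2323}+R_{2424} - 2R_{1234}$ of curvature components, which is precisely the quantity appearing (up to the Ricci correction) in the Weitzenb\"ock term applied to $\theta^1\wedge\theta^2$ once one also uses the Weitzenb\"ock term's Ricci part $\mathrm{Ric}(E_1,E_1)+\mathrm{Ric}(E_2,E_2)$ and sums over the auxiliary index. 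Executing this bookkeeping --- choosing the auxiliary directions, handling the Ricci terms, and checking that the $|D\omega|^2 \ge |\delta\omega|^2$ slack from Lemma~\ref{compr2} is enough to absorb any leftover non-isotropic curvature --- is where the real work lies; everything else is formal integration by parts on a closed manifold.
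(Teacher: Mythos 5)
Your proposal follows essentially the same route as the paper: closedness of $\omega$ gives $\Delta\omega=-d\delta\omega$, the Bochner--Weitzenb\"ock formula for $2$-forms is integrated (the paper does this over a connected component of $\Omega$ rather than over $M$), Lemma~\ref{compr2} discards $|D\omega|^2-|\delta\omega|^2\ge 0$, and PIC forces the curvature term to be strictly positive on the decomposable form $u\,\theta^1\wedge\theta^2$. The step you flag as the main obstacle --- identifying $\langle E(\omega),\omega\rangle$ with a sum of complex sectional curvatures of totally isotropic planes built from $E_1\mp iE_2$ and auxiliary directions --- is exactly what the paper handles by citing Proposition~2.3 of \cite{pz} (cf.\ \cite{m-w}, \cite{sea}), and for a rank-two form there is no leftover non-isotropic curvature to absorb.
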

\begin{proof}
It suffices to prove that $\Omega =\emptyset$, where  $\Omega =\{p\in M \, \vert\, \omega_p \neq 0$ on $T_pM\}$.  
Suppose, on the contrary,  $\Omega \neq \emptyset$. For $p\in \Omega$, let $\Omega_0$ be a
 connected component of $\Omega$ containing $p$. 
 Note that $ \tr \omega =  -d \delta \omega$  by Lemma~\ref{closedform}.
It follows from the Bochner-Weitzenb\"ock formula for $2$-forms 
(cf. \cite{pl1}, \cite{wu}) that 
\be \frac 12 \tr |\omega |^2 =\langle \tr \omega , \omega \rangle +|D\omega|^2 +\langle E(\omega), \omega\rangle,
\label{bwf}\ee
where $E(\omega)$ is a (local) $2$-form containing isotropic curvature terms as its coefficients.
In particular, if $(M,g)$ has positive isotropic curvature, following the Proposition 2.3 in \cite{pz}
(cf.  \cite{m-w}, \cite{sea}) we have
\be \langle E(\omega), \omega\rangle >0. \label{piccu}
\ee
Therefore, integrating (\ref{bwf}) over $\Omega_0$, we obtain
$$ \frac 12 \int_{\Omega_0} \tr |\omega|^2 =\int_{\Omega_0} (\langle \tr \omega, \omega \rangle + |D\omega |^2 )+\int_{\Omega_0} \langle E(\omega), \omega\rangle.
$$
Since $\omega =0$ on the boundary $\partial \Omega_0$ and $\omega$ is a closed form by Lemma~\ref{closedform}, we have
$$ 0=\int_{\Omega_0} |D\omega |^2 -|\delta \omega|^2 +\langle E(\omega), \omega\rangle.
$$
However, by Lemma~\ref{compr2} and the inequality (\ref{piccu}), the above equation is impossible if $\omega$ is nontrivial. Hence, we may conclude that $\omega =0$, or $\Omega_0=\emptyset$.
\end{proof}

\begin{thm} \label{thm1-1}
Let $M$ be an $n$-dimensional compact  smooth manifold with $n\geq 4$.  
If $(g,f)$ is a nontrivial solution of (\ref{cpe}) and $(M, g)$ has positive isotropic curvature, 
then $(M,g)$ is isometric to a standard sphere.
\end{thm}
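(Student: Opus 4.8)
The plan is to combine the two main pieces developed in the paper: the structural reduction of the Besse conjecture in Theorem~\ref{thm2020-8-22-3} and the vanishing of the auxiliary $2$-form $\omega = df\wedge i_{\nabla f}z$ established in Theorem~\ref{thm2018-5-16-2}. Recall that Theorem~\ref{thm2020-8-22-3} asserts that if a nontrivial solution $(g,f)$ of (\ref{cpe}) satisfies $z(\nabla f, X) = 0$ for every vector field $X$ orthogonal to $\nabla f$, then $(M,g)$ is isometric to a standard sphere. So the entire argument reduces to verifying this orthogonality condition under the hypothesis that $(M,g)$ has positive isotropic curvature.

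The first step is to invoke Theorem~\ref{thm2018-5-16-2}: since $M$ is compact of dimension $n\ge 4$ and has positive isotropic curvature, the $2$-form $\omega = df\wedge i_{\nabla f}z$ vanishes identically on $M$. The second step is to extract pointwise information from $\omega \equiv 0$. Working with a local frame $\{E_1 = N, E_2,\dots,E_n\}$ with $N = \nabla f/|\nabla f|$ on the open set $M\setminus\mathrm{Crit}(f)$, the vanishing $\omega = df\wedge i_{\nabla f}z = 0$ together with $df$ being nonzero there forces $i_{\nabla f}z$ to be proportional to $df$; that is, $i_{\nabla f}z = \alpha\, df$ as a $1$-form, where $\alpha = z(N,N)$. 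Equivalently, $z(\nabla f, E_j) = 0$ for all $j\ge 2$, which is precisely the condition $z(\nabla f, X) = 0$ for $X\perp\nabla f$, at least away from critical points. The third step is to handle the critical set: since $|i_{\nabla f}z| \le |\nabla f|\,|z|$ and $|z|$ is bounded on the compact manifold, the identity $z(\nabla f, X) = 0$ for $X\perp\nabla f$ extends by continuity from the dense open set $M\setminus\mathrm{Crit}(f)$ to all of $M$ (and is trivially true at critical points, where $\nabla f = 0$).

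Having verified the hypothesis of Theorem~\ref{thm2020-8-22-3}, the final step is simply to apply that theorem, concluding that $(M,g)$ is isometric to a standard sphere ${\Bbb S}^n$. Concretely, by Theorem~\ref{thm2018-1-20-11} it suffices that $(M,g)$ have harmonic curvature, and Lemma~\ref{lem2019-6-22-1} shows $T = 0$ under the orthogonality condition, whence $C = d^Dz = 0$; then $g$ is Einstein by \cite{ych},\cite{erra}, and Obata's theorem \cite{Ob} identifies $(M,g)$ with the round sphere.

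The genuine content is almost entirely in the two cited theorems, so there is no serious obstacle remaining in the present statement; the only point requiring a moment's care is the passage from "$\omega = 0$" to "$z(\nabla f, X) = 0$ for $X\perp\nabla f$" on all of $M$, i.e.\ the continuity extension across the critical set of $f$, which is routine given the uniform bound $|z|\le C$ on the compact manifold. Thus the proof is essentially a one-line deduction: positive isotropic curvature $\Rightarrow \omega \equiv 0$ (Theorem~\ref{thm2018-5-16-2}) $\Rightarrow z(\nabla f, X) = 0$ for $X\perp\nabla f$ $\Rightarrow (M,g)$ isometric to ${\Bbb S}^n$ (Theorem~\ref{thm2020-8-22-3}).
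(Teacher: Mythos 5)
Your proposal is correct and follows exactly the paper's intended route: Section 5 opens by declaring that, in view of Theorem~\ref{thm2020-8-22-3}, it suffices to verify $z(\nabla f,X)=0$ for $X\perp\nabla f$, and Theorem~\ref{thm2018-5-16-2} supplies $\omega=df\wedge i_{\nabla f}z=0$ under positive isotropic curvature, from which that orthogonality condition follows immediately (the condition being vacuous at critical points of $f$). Your chain of deductions, including the final appeal to Theorem~\ref{thm2018-1-20-11}, Lemma~\ref{lem2019-6-22-1}, \cite{ych}, \cite{erra} and Obata's theorem, matches the paper's argument.
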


\section{Appendix}
In the Appendix, we first claim that $-1$ is a regular value of the potential function $f$ 
when ${\rm Ric}(\n f, X) = 0$ for any vector field $X$ orthogonal to $\n f$ unless $(M, g)$ is Einstein.
Second, we prove (\ref{eqn2020-9-28-1}) and (\ref{eqn2020-9-28-2}) in Remark~\ref{rem2020-9-28-3}.

\subsection{regularity of $f$ on the set $f^{-1}(-1)$}

Define
$$
\vp:= \frac{1}{2}|\n f|^2 + \frac{s}{2n(n-1)}f^2.
$$
Then we have,
$$
N(\vp) = (1+f)\a |\n f|
$$
and 
$$
\Delta \vp = (1+f)^2 |z|^2 + |\n f|^2 \a.
$$
Therefore, 
\be
\Delta \vp - \frac{|\n f|}{1+f} N(\vp) = (1+f)^2 |z|^2 \ge 0. \label{eqn2020-10-8-1}
\ee
By the maximum principle, 
$$
\max_{f\le -1-\e} \vp = \max_{f=-1-\e} \vp
$$
and 
$$
\max_{f \ge -1+\e}\vp = \max_{f = -1+\e}\vp
$$
for  sufficiently small $\e>0$. So, letting $\e \to 0$, we have
\be
\max_M \vp = \max_{f=-1} \vp.\label{eqn2020-10-8-2}
\ee
Now, as mentioned in Introduction, if $\min_M f \ge -1$, then $(M, g)$ is Einstein,
 and so we may assume that $\min_M f <-1$. Let $\min_{x\in M} f(x) = f(x_0)$. Then
for any point $p \in f^{-1}(-1)$, we have
$$
\vp(p) = \frac{1}{2}|\n f|^2(p) + \frac{s}{2n(n-1)} \ge  \vp(x_0) = \frac{s}{2n(n-1)}[f(x_0)]^2 >
\frac{s}{2n(n-1)},
$$
which proves our claim.

\subsection{warped product metric}
\begin{lem}\label{lem7}
Suppose that $z(\n f, X) = 0$ for $X \perp \n f$. 
Then, 
\begin{itemize}
\item[(1)] for vectors $X, Y$ orthogonal to $\n f$,
$$
i_{\n f}T(X, Y) = \frac{|\n f|^2}{n-2}\left(z+\frac{\a}{n-1}g\right)(X, Y).
$$
\item[(2)] $i_{\n f}T(\n f, X) = i_{\n f}T(X, \n f) = 0$ for any vector $X$.
\end{itemize}
\end{lem}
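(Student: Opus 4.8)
The plan is to prove both parts by directly expanding the definition (\ref{defnt}) of $T$, using the hypothesis only through the identity $i_{\n f}z = \a\, df$ recorded in Lemma~\ref{lem2020-8-12-1} and (\ref{eqn2020-8-22-1}). Since the unmarked interior product acts on the first slot, $i_{\n f}T(X,Y) = T(\n f,X,Y)$; combining (\ref{defnt}) with the elementary wedge identities
$$
(df\wedge z)(\n f,X,Y) = |\n f|^2\, z(X,Y) - df(X)\, z(\n f,Y),\qquad
(i_{\n f}z\wedge g)(\n f,X,Y) = z(\n f,\n f)\, g(X,Y) - z(\n f,X)\, g(\n f,Y)
$$
gives one closed formula for $i_{\n f}T(X,Y)$ in terms of $z$, $df$, and $\a = z(N,N)$, where $N=\n f/|\n f|$ and $z(\n f,\n f) = |\n f|^2\a$.

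For part (1) I would restrict $X,Y$ to be orthogonal to $\n f$. Then $df(X) = g(\n f,X) = 0$ and $g(\n f,Y) = 0$, so the two off-diagonal terms in the formula above drop out \emph{regardless} of the hypothesis, leaving only
$$
i_{\n f}T(X,Y) = \frac{|\n f|^2}{n-2}\, z(X,Y) + \frac{|\n f|^2\a}{(n-1)(n-2)}\, g(X,Y) = \frac{|\n f|^2}{n-2}\left(z+\frac{\a}{n-1}g\right)(X,Y),
$$
which is the claimed identity.

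For part (2), the equality $i_{\n f}T(\n f,X) = T(\n f,\n f,X) = 0$ is immediate from the skew-symmetry of $T$ in its first two arguments noted after (\ref{defnt}). For $i_{\n f}T(X,\n f) = T(\n f,X,\n f)$ I would evaluate the closed formula at $Y = \n f$ (so $g(\n f,Y) = |\n f|^2$ and $df(X)=g(\n f,X)$) and collect terms; the two coefficients combine to give
$$
i_{\n f}T(X,\n f) = \frac{|\n f|^2}{n-1}\left(z(\n f,X) - \a\, df(X)\right) = \frac{|\n f|^2}{n-1}\left(i_{\n f}z - \a\, df\right)(X),
$$
which vanishes precisely because of the hypothesis in the form (\ref{eqn2020-8-22-1}).

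There is no genuine obstacle here: the whole argument is a short bookkeeping computation with wedge products. The only points requiring care are the sign and slot conventions in $df\wedge z$ and $i_{\n f}z\wedge g$, and the observation that part (1) in fact needs no hypothesis on $z$, whereas part (2) uses $i_{\n f}z = \a\, df$ in an essential way.
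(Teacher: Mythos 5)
Your proposal is correct and follows essentially the same route as the paper: the paper simply notes that the hypothesis gives $i_{\nabla f}z=\a\,df$, so $T=\frac{1}{n-2}\,df\wedge\bigl(z+\frac{\a}{n-1}g\bigr)$, and then evaluates; your expansion of the two wedge terms is just a more explicit version of that one-line computation, and your coefficient $\frac{|\n f|^2}{n-1}$ in part (2) agrees with the factor used in the proof of Theorem~\ref{thm2018-1-20-11}. The added remark that part (1) needs no hypothesis is a correct (minor) refinement.
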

\begin{proof}
If $\o= 0$, then $i_{\n f}z = \a df$ and hence
$$
T = \frac{1}{n-2} df\wedge \left(z+\frac{\a}{n-1}g\right).
$$
\end{proof}

 It follows from Lemma~\ref{lem2019-5-23-10} that
 \be
 -|\n f|^2 \mathcal W_N = (1+f)i_{\n f}C + (n-1)i_{\n f}T.\label{eqn2019-5-28-1}
 \ee
For the Weyl curvature tensor $\mathcal W$, $\mathcal W_N$ can be similarly defined as $R_N$.

\begin{lem} \label{lem2018-8-15-4}
Let $(g, f)$ be a non-trivial solution of the CPE with $\o = 0$. Then,
$$
\frac{s}{n(n-1)} g  = R_N +z +\frac{1+f}{|\n f|^2}i_{\n f}C- \left(\a - \frac{s}{n(n-1)}\right) \frac{df}{|df|}\otimes \frac{df}{|df|}.
$$
\end{lem}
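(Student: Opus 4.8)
The plan is to reduce the identity to an algebraic rearrangement of the standard decomposition of the curvature tensor, using the two structural identities already in hand: the relation (\ref{eqn2019-5-28-1}), namely $-|\n f|^2\mathcal{W}_N = (1+f)i_{\n f}C + (n-1)i_{\n f}T$, together with the formula for $i_{\n f}T$ from Lemma~\ref{lem7}. First I would start from the pointwise Weyl decomposition of the curvature tensor,
$$
R(X,Y,Z,W) = \mathcal{W}(X,Y,Z,W) + \frac{1}{n-2}\bigl(r(X,Z)g(Y,W) + r(Y,W)g(X,Z) - r(X,W)g(Y,Z) - r(Y,Z)g(X,W)\bigr) - \frac{s}{(n-1)(n-2)}\bigl(g(X,Z)g(Y,W) - g(X,W)g(Y,Z)\bigr),
$$
and contract it by putting $Y = W = N$, so that the left side becomes $R_N$ and the Weyl part becomes $\mathcal{W}_N$. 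Writing $r = z + \frac{s}{n}g$ and using the hypothesis $\o = 0$, which on $M\setminus\mathrm{Crit}(f)$ is the statement $i_{\n f}z = \a\,df$, hence $z(N,X) = 0$ for $X\perp N$ and $z(N,N) = \a$, every term of the form $r(X,N)$ or $r(N,Y)$ collapses to a multiple of $g(X,N)$ or $g(N,Y)$. Collecting coefficients, $R_N$ takes the form $\mathcal{W}_N + \frac{1}{n-2}z$ plus a scalar multiple of $g$ and a scalar multiple of $P := \frac{df}{|df|}\otimes\frac{df}{|df|}$, with coefficients that are explicit rational expressions in $s$, $n$ and $\a$.

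Next I would eliminate $\mathcal{W}_N$. Substituting $\mathcal{W}_N = -\frac{1}{|\n f|^2}\bigl[(1+f)i_{\n f}C + (n-1)i_{\n f}T\bigr]$ from (\ref{eqn2019-5-28-1}) brings in the term $-\frac{n-1}{|\n f|^2}i_{\n f}T$, and Lemma~\ref{lem7} evaluates $i_{\n f}T$: it annihilates $\n f$ in either slot and equals $\frac{|\n f|^2}{n-2}\bigl(z + \frac{\a}{n-1}g\bigr)$ on $(\n f)^{\perp}$, so as a full $2$-tensor $\frac{n-1}{|\n f|^2}i_{\n f}T = \frac{n-1}{n-2}z^{\top} + \frac{\a}{n-2}g^{\top}$, where $b^{\top}(X,Y) := b(X^{\top},Y^{\top})$ denotes the part of a $2$-tensor $b$ orthogonal to $\n f$. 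Splitting also $z = z^{\top} + \a P$ and $g = g^{\top} + P$ and gathering everything, the coefficients of $z^{\top}$ add to $\frac{1}{n-2} - \frac{n-1}{n-2} = -1$, the coefficients of $g^{\top}$ collapse to $\frac{s}{n(n-1)}$, and — the one cancellation worth verifying explicitly — the coefficient of $P$ vanishes identically. Re-expressing $z^{\top} = z - \a P$ and $g^{\top} = g - P$ then gives
$$
R_N + z + \frac{1+f}{|\n f|^2}i_{\n f}C - \frac{s}{n(n-1)}g = \Bigl(\a - \frac{s}{n(n-1)}\Bigr)P,
$$
which rearranges to the asserted formula.

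I expect the only real difficulty to be bookkeeping: keeping consistent sign and index conventions in the Weyl decomposition and, more importantly, correctly reassembling $i_{\n f}T$ as a full $2$-tensor from Lemma~\ref{lem7} (which records its value only on vectors orthogonal to $\n f$, together with its vanishing in the $\n f$-direction), and systematically tracking the splitting of $z$, $g$ and $i_{\n f}T$ into their $(\n f)^{\perp}$-part and their $P$-part. No analysis is needed; the statement is a pointwise algebraic rearrangement on $M\setminus\mathrm{Crit}(f)$, where all the terms appearing are defined.
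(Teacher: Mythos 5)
Your proposal is correct and follows essentially the same route as the paper: the standard Weyl decomposition contracted with $N$ in the second and fourth slots, elimination of $\mathcal W_N$ via (\ref{eqn2019-5-28-1}) and Lemma~\ref{lem7}, and then the tangential/normal splitting to pick up the $\left(\a-\frac{s}{n(n-1)}\right)\frac{df}{|df|}\otimes\frac{df}{|df|}$ term (the paper packages this by setting $\Phi=\frac{s}{n(n-1)}g-z-\frac{1+f}{|\n f|^2}i_{\n f}C$ and showing $R_N=\Phi$ on $(\n f)^\perp$). The only step worth making explicit in your write-up is that $i_{\n f}C$ also vanishes whenever one argument is $N$ (by skew-symmetry of $C$ in its first two slots and by $\tilde i_{\n f}C=0$ from Lemma~\ref{Cotz}), which is what makes your $P$-coefficient cancellation and the mixed-component bookkeeping go through.
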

\begin{proof}
 Let
 $$
 \Phi:=  \frac{s}{n(n-1)}g  - z - \frac{1+f}{|\n f|^2}i_{\n f}C.
 $$
 For vector fields $X, Y$ with $X \perp \n f$ and $ Y \perp \n f$, 
from the curvature decomposition
$$
R = \frac{s}{2n(n-1)}g\owedge g + \frac{1}{n-2}z\owedge g + \mathcal W, 
$$
we obtain
 \bea
 R_N(X, Y) = \frac{s}{n(n-1)}g(X, Y) + \frac{1}{n-2} z(X, Y) + \frac{\a}{n-2} g(X, Y)
 +\mathcal W_N(X, Y).
 \eea
Since, by Lemma~\ref{lem7} together with (\ref{eqn2019-5-28-1}),
 \bea
 \mathcal W_N(X, Y) =
 - \frac{1+f}{|\n f|^2}i_{\n f}C(X, Y) - \frac{n-1}{n-2}z(X, Y) - \frac{\a}{n-2}g(X, Y),
 \eea
we have
\be
R_N(X, Y) = \Phi(X, Y).\label{eqn2019-5-28-2}
\ee
Now, let $X$ and $Y$  be arbitrary tangent vector fields. Then, $X$ and $Y$ can be decomposed into 
  $$
  X = X_1 + \langle X, N\rangle N,\quad Y = Y_1 +\langle Y, N\rangle N
  $$
  with $\langle X_1, N\rangle =0 =\langle Y_1, N\rangle.$
  Thus,
  \bea
  R_N(X, Y)    &=& 
  R_N(X_1, Y_1)= \Phi (X_1, Y_1)\\
&=&
\Phi(X, Y) - \langle X, N\rangle  \langle Y, N\rangle  \Phi(N, N)\\
 &=&
\Phi(X, Y) +   \left(\a -\frac{s}{n(n-1)} \right)  \frac{df}{|df|}\otimes \frac{df}{|df|}(X, Y).
   \eea
   
\end{proof}

\begin{lem}
On the set $f^{-1}(-1)$,  we have
\bea
{\mathcal L}_{\n f} \left(\frac{h}{|\n f|^2}\right) = 0.
\eea
Here, recall that $h:= R_N +\left(z- \a \frac{df}{|df|}\otimes \frac{df}{|df|}\right) 
+\frac{1+f}{|\n f|^2}i_{\n f}C.$
\end{lem}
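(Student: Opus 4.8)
The plan is to use Lemma~\ref{lem2018-8-15-4} to collapse $h$ into an expression built only from $g$, $df$ and the function $|\n f|^2$, and then to carry out a short Lie-derivative computation. Under the standing assumption $\o=0$ one has $i_{\n f}z=\a\,df$ as a $1$-form; substituting this into Lemma~\ref{lem2018-8-15-4} and rearranging, the Cotton-tensor contribution cancels completely and one is left with
$$
h=\frac{s}{n(n-1)}\Bigl(g-\frac{df}{|df|}\otimes\frac{df}{|df|}\Bigr)=\e\,g-\frac{\e}{|\n f|^2}\,df\otimes df,
$$
where $\e:=\frac{s}{n(n-1)}$ is a genuine constant, since $s$ is constant along the CPE. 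Hence $\dfrac{h}{|\n f|^2}=\dfrac{\e}{|\n f|^2}\,g-\dfrac{\e}{|\n f|^4}\,df\otimes df$, and the problem becomes pure Lie-derivative bookkeeping.

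Next I would compute $\mathcal L_{\n f}$ of the two summands. From (\ref{eqn2020-5-7-2}) we have $\mathcal L_{\n f}g=2Ddf=2(1+f)z-2\e f\,g$; since $df$ is closed, $\mathcal L_{\n f}(df)=d\bigl(\n f(f)\bigr)=d|\n f|^2$, and evaluating $Ddf(\n f,\cdot)=(1+f)\,i_{\n f}z-\e f\,df=[(1+f)\a-\e f]\,df$ (using $i_{\n f}z=\a\,df$) gives $d|\n f|^2=2[(1+f)\a-\e f]\,df$, and in particular $\n f(|\n f|^2)=2[(1+f)\a-\e f]\,|\n f|^2$. Plugging these into the Leibniz rule, the summand $\dfrac{\e}{|\n f|^4}\,df\otimes df$ turns out to be $\mathcal L_{\n f}$-invariant identically — the contribution of $\n f(|\n f|^{-4})$ cancels $\dfrac{\e}{|\n f|^4}\,\mathcal L_{\n f}(df\otimes df)$ — while the remaining summand simplifies to $\mathcal L_{\n f}\bigl(\e|\n f|^{-2}g\bigr)=\dfrac{2\e(1+f)}{|\n f|^2}(z-\a g)$. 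Altogether,
$$
\mathcal L_{\n f}\Bigl(\frac{h}{|\n f|^2}\Bigr)=\frac{2\e(1+f)}{|\n f|^2}\,(z-\a g)
$$
as a tensor defined on a neighborhood of $f^{-1}(-1)$.

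It then remains only to restrict this identity to $f^{-1}(-1)$, where $1+f=0$, which forces the right-hand side to vanish; this is the assertion. The sole technical point to check is that the computation is legitimate near $f^{-1}(-1)$, i.e.\ that $|\n f|\neq0$ there, which is exactly the regularity of the value $-1$ established in the first part of the Appendix (valid whenever $g$ is not Einstein). I do not foresee a real obstacle: the two things that must go exactly right are that $\e$ is constant, so $\mathcal L_{\n f}\e=0$, and the identical cancellation of the $df\otimes df$ terms, which is forced precisely by the relation $i_{\n f}z=\a\,df$.
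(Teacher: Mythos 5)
Your proof is correct and follows essentially the same route as the paper: both rest on the identity $h=\e\bigl(g-\frac{df}{|df|}\otimes\frac{df}{|df|}\bigr)$ from Lemma~\ref{lem2018-8-15-4} (equation (\ref{eqn2020-9-28-10})), the CPE expression for $Ddf$ together with $i_{\n f}z=\a\,df$, and the Lie-derivative bookkeeping that makes the $df\otimes df$ contributions cancel, with the factor $1+f$ killing everything on $f^{-1}(-1)$. Your explicit global formula $\mathcal L_{\n f}\bigl(h/|\n f|^2\bigr)=\frac{2\e(1+f)}{|\n f|^2}(z-\a g)$ agrees with what the paper's computation yields after simplification (the only cosmetic quibble is that the Cotton term does not "cancel" — it is simply absorbed into $h$ by definition via the lemma).
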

\begin{proof}
First, from CPE, we have
$$
\frac{1}{2}{\mathcal L}_{\n f} g = Ddf = (1+f)z - fh - \e f  \frac{df}{|df|}\otimes \frac{df}{|df|}
$$
with $\epsilon = \frac{s}{n(n-1)}$. It follows from (\ref{eqn2020-5-7-3}), (\ref{eqnt2}) and (\ref{eqn2020-9-28-10}) that
\bea 
\frac 1{2\e} {\mathcal L}_{\nabla f} h = (1+f) z - f h - (1+f)\a  \frac {df}{|\nabla f|} \otimes \frac {df}{|\nabla f|}.
\eea
Thus, 
\bea
\frac{1}{2}{\mathcal L}_{\n f} \left(\frac{h}{|\n f|^2}\right)  
&=&
\frac{1}{2}\n f (|\n f|^{-2}) h + \frac{1}{2|\n f|^2} {\mathcal L}_{\n f} h\\
&=&
-N(|\n f|) \frac{h}{|\n f|^2} + \frac{1}{|\n f|^2} \frac{1}{2}{\mathcal L}_{\n f} h.
\eea
In particular, on the set $f^{-1}(-1)$, we have
$$
\frac{1}{2}{\mathcal L}_{\n f} \left(\frac{h}{|\n f|^2}\right)  = 0.
$$
\end{proof}



\end{document}